\newtheorem{theorem}{Theorem}[section]
\newtheorem{lemma}[theorem]{Lemma}
\newtheorem{prop}[theorem]{Proposition}
\newtheorem{corollary}[theorem]{Corollary}
\theoremstyle{remark}
\newtheorem{remark}[theorem]{Remark}
\theoremstyle{definition}
\newcommand{\abs}[1]{\left| #1 \right|}
\newcommand{\norm}[1]{\left\lVert #1 \right\rVert}
\newcommand{\scp}[1]{\left\langle #1 \right\rangle}
\newcommand{\set}[1]{\left\lbrace #1\right\rbrace}
\renewcommand{\d}[1]{\,d #1}
\newcommand{\restr}[2]{\left. #1 \right|_{#2}}
\newcommand{\setcompl}[1]{{#1}^{c}}
\newcommand{\B}{\mathbb{B}}
\newcommand{\D}{\mathcal{D}}
\newcommand{\K}{\mathcal{K}}
\newcommand{\M}{\mathcal{M}}
\newcommand{\NOp}{\mathcal{N}}
\newcommand{\POp}{\mathcal{P}}
\newcommand{\AM}{\mathbf{A}}
\newcommand{\N}{\mathbb{N}}
\newcommand{\R}{\mathbb{R}}
\newcommand{\Sp}{\mathbb{S}}
\newcommand{\nd}{\partial_{\nu}}
\newcommand{\diam}[1]{\mathrm{diam}(#1)}
\newcommand{\pv}{\mathrm{p.\, v.}}
\numberwithin{equation}{section}
\numberwithin{theorem}{section}
\begin{document}
	\title{{\fontsize{12}{15}\bfseries\uppercase{Photoacoustic inversion formulas using mixed data on finite time intervals}}\thanks{\textbf{Funding}: This work has been supported by the Austrian Science Fund (FWF), project P 30747-N32.}}
	
	\author{Florian Dreier\thanks{Department of Mathematics, University of Innsbruck, Technikerstraße 13, A-6020 Innsbruck, Austria (Florian.Dreier@uibk.ac.at, Markus.Haltmeier@uibk.ac.at).}
\and and \and Markus Haltmeier\footnotemark[2]}

	\date{}
	
	\maketitle
	\begin{abstract}
  	We study the inverse source problem in photoacoustic tomography (PAT) for mixed data, which denote a weighted linear combination of the acoustic pressure and its normal derivative on an observation surface. We consider in particular the case where the data are only available on finite time intervals, which accounts for real-world usage of PAT where data are only feasible within a certain time interval. Extending our previous work, we derive explicit formulas up to a smoothing integral on convex domains with a smooth boundary, yielding exact reconstruction for circular or elliptical domains. We also present numerical reconstructions of our new exact inversion formulas on finite time intervals and compare them with the reconstructions of our previous formulas for unlimited time wave measurements.
  	
  \medskip \noindent \textbf{Keywords.}
Image reconstruction, wave equation, Abel integral equations, inversion formula, photoacoustic computed tomography. 

\medskip \noindent \textbf{AMS subject classifications:}
35R30, 44A12, 35L05, 92C55.
\end{abstract}

\section{Introduction}
In recent decades, PAT has attracted considerable attention in biomedical optics. This imaging method aims at recovering the spatially varying absorption coefficient of an internal source by measuring the resulting acoustic waves detected outside of the object. Here, the absorption coefficient is with respect to external electromagnetic radiation. The acoustic measurements are acquired by so-called ultrasound detectors, which lie on a surface surrounding the imaged object \cite{LiWa09,WaHu12,WaPaKuXiStWa03,XuWa05,XuWa06}. Such wave phenomena can be modeled by the $n$-dimensional standard wave equation
\begin{equation}
	\label{eq:waveeq}
	\begin{aligned}
    	\partial_t^2u(x,t)-\Delta u(x,t)&=0 &\quad &\text{for } (x,t) \in \R^n \times (0,\infty),\\
    	u(x,0)&=f(x)&\quad &\text{for } x \in \R^n,&\\
    	\partial_tu(x,0)&=g(x)&\quad &\text{for } x \in \R^n \,,
    \end{aligned}
\end{equation}
where $f$ and $g$ are the initial data and $u\colon\R^n\times[0,\infty)\to\R$ the solution of the wave equation. Extensions of the standard wave model in PAT have also been considered and are presented in  \cite{FueMooQuiFerGar21,MooReyGarGarAraGar15,UluUnlGulErk18}, for example.

In PAT, it is commonly assumed that $g$ is the zero function and $f$ has compact support in some open domain $\Omega\subset\R^n$. The function $f$ describes the optical absorption coefficient at different positions whereby various types of the biological tissues can be detected, including brain tumor and skin melanoma (see, for example, \cite{OhLiZhMaLv06,StGrSaCuCoYa10,ZhMaStWa06}). In PAT, one is interested to determine this physical quantity by using the measured pressure waves on the observation surface $\partial\Omega$.

Most analytic reconstruction methods in PAT assume that the measurements correspond to the values of the solution $u$ on $\partial\Omega\times(0,\infty)$, which is also known as the \emph{Dirichlet trace} or \emph{Dirichlet data} on $\partial\Omega\times(0,\infty)$. Depending on the measurement surface $\partial\Omega$, there exist, for example, explicit inversion formulas for the determination of $f$ that require only knowledge of $\restr{u}{\partial\Omega\times(0,\infty)}$. To name a couple of references, inversion formulas have been established for bounded smooth surfaces like spheres \cite{XuWa05,FinHalRak07,FinPatRak04,Kun07,Ngu09,NorLin81}, ellipses \cite{AnsFibMadSey12,Hal13,Hal14,Nat12,Pal14,Sal14} and also for unbounded smooth surfaces \cite{XuWa05,NorLin81,And88,Bel09,BukKar78,Faw85,HalSer15a,HalSer15b,NarRak10,nguyen2014reconstruction} including planar, quadric hypersurfaces and cylindrical surfaces. In \cite{Pal14,DoKun18,Kun11,Kun15}, non-smooth and finite open measurement surfaces have been considered.

As pointed out in \cite{Fin05,XuWa04}, for example, the measured data from piezoelectric transducers is generally a complex combination of the acoustic field $u$ and its normal gradient (normal derivative) of $u$ on the observation surface $\partial\Omega$ and therefore, corresponding to the output $\Phi(\restr{u}{S\times(0,\infty)},\nd u)$ of some function $\Phi$. Theoretical results on the problem of recovering $f$ from the normal derivative of $u$ (which is also referred to as the \emph{Neumann trace} of $u$) are rather rare in the literature. In \cite{Fin05,FinRak07}, an inversion formula for a smooth function $g$ from the normal gradient of $u$ in \eqref{eq:waveeq} with initial data $(0,g)$ on spheres in 3D is presented. Recently, in \cite{ZanMooHal19}, a series formula for spheres and in \cite{DreHal20,DreHal21} an exact inversion formula of back-projection type for ellipsoids in arbitrary dimensions has been derived.

Accounting for more realistic measurement models, which additionally depends on the normal derivative, we use a linearization of $\Phi$, resulting in data \[u_{a,b}(x,t)\coloneqq au(x,t)+b\nd u(x,t),\quad (x,t)\in\partial\Omega\times (0,\infty),\] for some weights $a,b\in\R$. Throughout this article, we refer to these measurements as \emph{mixed data} or \emph{mixed trace}. Recovering the absorption coefficient from mixed data has previously been studied in \cite{ZanMooHal19} for spheres in arbitrary dimensions and in \cite{DreHal20} for circles in two dimensions. Another mathematical model for piezoelectric sensors is proposed, for example, in \cite{Aco20}.

\subsection{Inversion from finite time intervals}
In this paper, we study the inverse problem in PAT, where the acoustic measurements of the photoacoustic sensors are given only on a finite time interval. Specifically, we assume a finite end time $T\in(0,\infty)$ for measuring the acoustic signals and consider mixed data on the time interval $(0,T)$ as an output signal of the transducers. As opposed to the previous articles \cite{DreHal20,DreHal21}, where we assumed that Neumann traces in arbitrary dimensions as well as mixed data in two dimensions are available on the whole time interval $(0,\infty)$, this justifies truncating data at finite values of $T$. The problem of recovering the initial data $(f,0)$ from Dirichlet traces on a finite time interval has already been discussed in \cite{FinHalRak07}, where an inversion formula in dimension two has been developed. More precisely, they used Abel transform inversion for recovering the spherical means with centers lying on a circle in $\R^2$ from its Dirichlet traces. As a consequence, the values of the solution of the wave equation for time points greater than the diameter of the circle can be recovered by knowing only the values for time points smaller than the diameter, which has been rearranged as single finite time inversion formula. In this article, we use a similar approach and derive relations between spherical means and Dirichlet traces as well as the \emph{weighted spherical means} (which will be defined later) and Neumann traces in even dimensions. In case of odd dimensions, such additional considerations are not necessary, as we will see in the next subsection.
\subsection{Previous work}
The subsequent calculations are based on our previous work in \cite{DreHal20,DreHal21}, where we derived the reconstruction formulas
\begin{equation}
	\label{eq:invneumanneven}
	f(x)=\frac{1}{2^{\frac{n-2}{2}}\pi^{\frac{n}{2}}}(-1)^{\frac{n-2}{2}}\int_{\partial\Omega}\int_{\norm{x-y}}^\infty \frac{\left(\partial_t t^{-1} \right)^{\frac{n-2}{2}}\nd u(y,t)}{\sqrt{t^2-\norm{x-y}^2}}\d{t}\d{\sigma(y)}-\K_\Omega f(x),
\end{equation}
and
\begin{equation}
	\label{eq:invneumannodd}
	f(x)=\frac{1}{(2\pi)^{\frac{n-1}{2}}}(-1)^{\frac{n-3}{2}}\int_{\partial\Omega} \left(\frac{1}{t}\partial_t\right)^{\frac{n-3}{2}}\left(\frac{1}{t}\nd u\right)(y,\norm{x-y})\d{\sigma(y)}-\K_\Omega f(x),
\end{equation}
over unbounded time intervals for Neumann traces, holding true in even and odd dimensions and for smooth functions $f\in C_c^\infty(\Omega)$ with compact support in an open convex domains $\Omega\subset\R^n$ with smooth boundary.
For the Dirichlet data case, we make use of the explicit inversion formula (see \cite{Hal14})
\begin{equation}
	\label{eq:invdirichleteven}
	f(x)=\frac{1}{2^{\frac{n-2}{2}}\pi^{\frac{n}{2}}}(-1)^{\frac{n-2}{2}}\nabla_x\cdot\int_{\partial\Omega}\nu(y)\int_{\norm{x-y}}^{\infty}\frac{\left(\partial_t t^{-1}\right)^{\frac{n-2}{2}}u(y,t)}{\sqrt{t^2-\norm{x-y}^2}}\d{t}\d{\sigma(y)}+\K_{\Omega} f(x),
\end{equation}
being valid in even dimensions under the assumptions presented above. Existing formulas for Dirichlet traces in odd dimensions such as in \cite{Hal14,FinRak07}, for example, as well as formula \eqref{eq:invneumannodd} for Neumann traces only require finite time measurements, because they are equal to zero on the boundary $\partial\Omega$ for all time points greater than or equal to the diameter of $\Omega$. For that reason, we only investigate finite time inversion formulas for mixed traces in odd dimensions.

At this point, we remark that $\K_\Omega f=0$ for circular or elliptical domains \cite{Hal14,DreHal21}. Therefore, in this article whenever an explicit formula for $f$ depends on $\K_\Omega f$, the formula is exact for circular or elliptical domains.

Formulas \eqref{eq:invneumanneven}--\eqref{eq:invdirichleteven}, which are used in the derivation of our new formulas, are based on two integral identities for the standard wave equation.
These integral identities include the free-space Green function twice (two boundary integrals) and are a kind of Kirchhoff integral formula. Formulas \eqref{eq:invneumanneven}--\eqref{eq:invdirichleteven} are derived by expressing one boundary integral via the other. Further methods for inverting the initial pressure are, for example, so-called far-field approximations (see \cite{BurMatHalPal07}, for example), where the pressure field is approximated by Radon projections instead of spherical means of the initial pressure, or the delay and sum (DAS) algorithm, which is a simple beamforming algorithm \cite{MaPeYuaCheXuWanCar19}.
\subsection{Outline}
The paper is organized as follows. First, we start with some notations being used throughout the article. Then, we derive our main results in section \ref{sec:timeboundinveven}, where we derive explicit inversion formulas for Neumann and Dirichlet traces on convex domains $\Omega\subset\R^n$ with a smooth boundary as well as for mixed traces on circular domains over finite intervals $(0,T)$. Our restriction on the end time is that $T$ has to be greater than or equal to the diameter of the domain $\Omega$. Note that although only knowledge of the measurements on the time interval from zero to the diameter of $\Omega$ are theoretically necessary, the numerical results in section \ref{sec:numresults} do not show the same numerical reconstructions for different end times. We will also compare the numerical results of our new inversion formulas with numerical results of formulas \eqref{eq:invneumanneven} and \eqref{eq:invdirichleteven} for unbounded time intervals. Section \ref{sec:timeboundinvodd} studies the inversion problem in odd dimensions. Before ending our article in section \ref{sec:appendix} with some auxiliary results, we will give a short conclusion in section \ref{sec:conclusion}.

\section{Notation}
Let $\Omega\subset\R^n$ be an open set, $x\in \Omega$ and $f\colon \Omega\to\R$ differentiable at $x$. For a vector $v\in\R^n$ we denote by $D_v f(x)$ the directional derivative of $f$ at $x$ along the vector $v$, that is, for a sufficient small $\varepsilon>0$, the derivative of the function \[(-\varepsilon,\varepsilon)\to\R\colon t\mapsto f(x+tv)\] at zero. If $\Omega$ is a bounded domain with smooth boundary and $x\in\partial\Omega$, we use the abbreviation $\nd f(x)\coloneqq D_{\nu(x)}f(x)$, indicating the \emph{normal derivative} of $f$ at $x$, where $\nu\colon\partial\Omega\to\R^n$ is the outward unit normal vector field of $\Omega$. For a function $f\colon\R^n\times[0,\infty)\to\R$, we also use the notation $D_v f(x,t)$ and $\nd f(x,t)$  to denote the directional derivative and normal derivative of $f$ at $(x,t)\in \R^n\times[0,\infty)$ with respect to the spatial variable. Note that the chain rule gives
\begin{equation}
	\label{eq:reldirder}
	D_v f(x,t)=\scp{v,\nabla f(x,t)},\quad (x,t)\in\R^n\times[0,\infty),
\end{equation}
where $\nabla f(x,t)$ denotes the gradient of $f$ in the spatial variable.

The spherical mean operator of an integrable function $f\colon\R^n\to\R$ is defined as
\begin{equation}
	\label{eq:defspmeanop}
	\M f\colon \R^n\times [0,\infty)\to \R\colon (x,r)\mapsto \frac{1}{n\omega_n}\int_{\Sp^{n-1}} f(x+ry) \d{\sigma(y)},
\end{equation}
where $\omega_n$ denotes the volume of $n$-dimensional unit ball, $\Sp^{n-1}$ the $n-1$-dimensional unit sphere in $\R^n$ and $\sigma$ the standard volume measure on $\Sp^{n-1}$ (without any angular weighting). Moreover, for brevity we write \[\M_\nu f \colon \R^n\times [0,\infty)\to \R\colon (x,r)\mapsto \nd\M f(x,r),\] denoting the normal derivative of $\M f$ in the spatial variable or the \emph{weighted spherical mean transform}.

For any function $f\colon (a,c)\cup(c,b)\to\R$ with $a<b$ and $c\in(a,b)$, which is integrable on $(a,c-\varepsilon)\cup(c+\varepsilon,b)$ for all $0<\varepsilon<\min\set{c-a,b-c}$, we write
	\[\pv\int_a^b f(x)\d{x}\coloneqq \lim_{\varepsilon\searrow 0}\left(\int_a^{c-\varepsilon} f(x)\d{x} + \int_{c+\varepsilon}^b f(x)\d{x}\right),\]
provided the above limit exists. This form of integral is known as the \emph{Cauchy principle value}.

Lastly, we use $\POp$ to denote the operator which takes a function $g\in C_c^\infty(\B^n)$ with compact support in the open unit ball in $\R^n$ to the restriction of the solution of wave equation \eqref{eq:waveeq} with initial data $(0,g)$ on $\Sp^{n-1}\times[0,\infty)$. Moreover, we use the symbol $\NOp$ to indicate the operator which maps a function $f\in C_c^\infty(\B^n)$ to the restriction of $t^{n-2}\M f$ on $\Sp^{n-1}\times[0,\infty)$.

From now on, we assume that $\Omega\subset\R^n$ is a convex domain with smooth boundary and $T\geq\diam{\Omega}\coloneqq\sup\set{\norm{x-y}\mid x,y\in\Omega}$. Furthermore, for the rest of this article, we denote by $u\colon\R^n\times[0,\infty)\to\R$ the solution of the wave equation with initial data $(f,0)$, for a given function  $f\in C_c^\infty(\Omega)$.
\section{Inversion in even dimensions}
\label{sec:timeboundinveven}
In this section, we present our main results for the even-dimensional case. The following theorems are based on the relations given in Proposition \ref{prop:dvsolwaveeqeven}, \ref{prop:solabelinteqwaveeqeven} and the reconstruction formulas derived from them in Theorem \ref{thm:invdvspmeanopeven} for inverting the weighted spherical mean transform. We will also see that explicit inversion formulas depend on a measureable kernel $k_T\colon (0,T)^2\to\R$, which is independent of the spatial dimension $n\geq 2$.

\subsection{Inversion from Neumann data on finite time intervals}
The first theorem presents an explicit inversion formula for determining the initial data of the wave equation from Neumann traces on the bounded manifold $\partial\Omega\times (0,T)$.
\begin{theorem}
	\label{thm:invbtneumanneven}
	Let $n\geq 2$ be an even number, $f\in C_c^\infty(\Omega)$ be a smooth function with compact support in $\Omega$ and $k_T\colon (0,T)^2\to\R$ be the kernel function defined by $k_T(r_1,r_2)\coloneqq\frac{2}{\pi\sqrt{\abs{r_1^2-r_2^2}}}\tilde{k}_T(r_1,r_2)$ for $r_1\neq r_2$, where
	\begin{align*}
		\tilde{k}_T(r_1,r_2)\coloneqq\begin{cases}
			\frac{1}{2}\log\left(\frac{\sqrt{T^2-r_2^2}-\sqrt{r_1^2-r_2^2}}{\sqrt{T^2-r_2^2}+\sqrt{r_1^2-r_2^2}}\right),\quad &r_1>r_2,\\
			\arctan\left(\frac{\sqrt{T^2-r_1^2}}{\sqrt{r_2^2-r_1^2}}\right),\quad& r_1<r_2,\\
		\end{cases}
	\end{align*}
	and $k_T(r_1,r_2)\coloneqq 0$ for $r_1=r_2$. Then, for every $x\in\Omega$, we have
	\begin{equation}
		\label{eq:invneumannbteven}
		f(x)=\frac{2(-1)^{\frac{n-2}{2}}}{\omega_n\gamma_n}\int_{\partial\Omega}\int_0^{T}k_T(\norm{x-y},t)\left(\partial_t t^{-1}\right)^{\frac{n-2}{2}}\nd u(y,t)\d{t}\d{\sigma(y)}-\K_{\Omega} f(x),
	\end{equation}
	where $\omega_n$ denotes the volume of the $n$-dimensional unit ball and $\gamma_n=2\cdot 4\cdots (n-2)\cdot n$.
\end{theorem}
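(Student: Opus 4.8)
The starting point is the unbounded-time reconstruction formula \eqref{eq:invneumanneven}, whose only obstruction to being a finite-time formula is that its inner integral runs over $(\norm{x-y},\infty)$, while the processed Neumann trace $\phi(y,t)\coloneqq\left(\partial_t t^{-1}\right)^{\frac{n-2}{2}}\nd u(y,t)$ genuinely has a nonvanishing tail for $t\geq T$: in even dimensions there is no Huygens principle, so $u$ and $\nd u$ do not become identically zero once the wave has passed. The guiding observation is that this tail carries no independent information, because the underlying weighted spherical mean $\M_\nu f(y,\cdot)$ is compactly supported. Indeed, for $y\in\partial\Omega$ and $r>\diam{\Omega}$ the sphere of radius $r$ about $y$ cannot meet $\supp{f}\subset\Omega$, so $\M f(y,r)=0$ and hence $\M_\nu f(y,r)=0$; since $T\geq\diam{\Omega}$, the function $\M_\nu f(y,\cdot)$ is supported in $[0,T]$. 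The plan is therefore to express $\phi$ on all of $(0,\infty)$ through its restriction to $(0,T)$ and to absorb the resulting bookkeeping into a single kernel.

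Concretely, I would proceed in three moves. First, invoke Proposition \ref{prop:dvsolwaveeqeven} to write $\phi(y,t)$ as an Abel-type transform of $\M_\nu f(y,\cdot)$ in a radial variable $s$. Second, use that $\M_\nu f(y,\cdot)$ is supported in $[0,T]$ together with Proposition \ref{prop:solabelinteqwaveeqeven}, which solves the associated Abel integral equation, to recover $\M_\nu f(y,\cdot)$ and hence $\phi(y,t)$ for every $t$, including $t>T$, from the finite-time data $\restr{\phi}{(0,T)}$. Third, feed this into the weighted-spherical-mean inversion of Theorem \ref{thm:invdvspmeanopeven} (equivalently, into \eqref{eq:invneumanneven}): the $t$-integral over $(\norm{x-y},\infty)$ then splits into a genuinely finite part over $(\norm{x-y},T)$ and a tail over $(T,\infty)$ in which $\phi$ has been replaced by its Abel representation supported on $(0,T)$. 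Interchanging the $t$- and $s$-integrations in the tail turns the whole expression into $\int_0^T(\,\cdot\,)\,\phi(y,t)\d{t}$ against an explicit kernel, which is identified with $k_T(\norm{x-y},t)$.

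The heart of the argument, and the step I expect to be the main obstacle, is the closed-form evaluation of that kernel. After the interchange one is left with inner integrals of the schematic type $\int\frac{\d{t}}{\sqrt{t^2-r_1^2}\,(t^2\pm\cdots)}$ whose values bifurcate according to the sign of $r_1-r_2$: the case $r_1>r_2$ integrates to the logarithmic branch $\tfrac12\log\frac{\sqrt{T^2-r_2^2}-\sqrt{r_1^2-r_2^2}}{\sqrt{T^2-r_2^2}+\sqrt{r_1^2-r_2^2}}$, while the case $r_1<r_2$ yields the branch $\arctan\!\bigl(\sqrt{T^2-r_1^2}/\sqrt{r_2^2-r_1^2}\bigr)$, together producing the factor $\tilde{k}_T$ and, after accounting for the Jacobians of the substitutions, the prefactor $\frac{2}{\pi\sqrt{\abs{r_1^2-r_2^2}}}$. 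Care is needed along the diagonal $r_1=r_2$, where the kernel is singular: there one reads the $t$-integral as a Cauchy principal value, checks that the two one-sided contributions cancel so that the diagonal is a null set, and sets $k_T=0$ there, consistent with the statement.

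Finally, the normalisation constants $\omega_n$, $\gamma_n=2\cdot4\cdots(n-2)\cdot n$ and the sign $(-1)^{\frac{n-2}{2}}$ are collected from \eqref{eq:invneumanneven} and the two Abel identities, while the nonlocal term $\K_\Omega f(x)$ is carried through unchanged, giving \eqref{eq:invneumannbteven}; recall that $\K_\Omega f=0$ for circular or elliptical $\Omega$, so the formula is exact there. Throughout, the interchange of integration and the passage to the principal value must be justified from the smoothness and compact support of $f$, which guarantee the decay and integrability of $\phi$ and of $\M_\nu f(y,\cdot)$ needed at each step.
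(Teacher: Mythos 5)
Your proposal is correct in substance and rests on the same two ingredients as the paper's proof: the forward Abel relation \eqref{eq:dvsolwaveeqeven2}, which writes $\left(\partial_t t^{-1}\right)^{\frac{n-2}{2}}\nd u$ as an Abel transform of the weighted spherical means, and the inverse relation \eqref{eq:dvsolabelinteqwaveeq2}, which writes the weighted spherical means back in terms of the finite-time Neumann data; these are combined by Fubini and an explicit evaluation of the resulting inner integrals (Lemmas \ref{lem:intinvsphericalmeanop} and \ref{lem:intinvbtneumanneven}), producing exactly the logarithmic and arctangent branches of $\tilde{k}_T$. The organization differs only mildly: the paper first converts \eqref{eq:invneumanneven} into the finite-time principal-value inversion \eqref{eq:invndspop2} of the weighted spherical mean transform and then substitutes the inverse Abel relation there, whereas you split the time integral of \eqref{eq:invneumanneven} at $T$ and substitute only in the tail over $(T,\infty)$; both lead to the same kernel, and you correctly flag Theorem \ref{thm:invdvspmeanopeven} as the equivalent entry point. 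One detail is misplaced, though it does not break the argument: the diagonal $t=\norm{x-y}$ of the final kernel needs no principal value and no cancellation of one-sided contributions, since $k_T(\norm{x-y},t)$ in fact stays bounded as $t\nearrow\norm{x-y}$ and blows up only like $\left(t^2-\norm{x-y}^2\right)^{-1/2}$ as $t\searrow\norm{x-y}$, so the double integral in \eqref{eq:invneumannbteven} converges absolutely. The genuine principal value lives one level up, in the $r$-integral of \eqref{eq:invndspop2} with its $1/(r^2-\norm{x-y}^2)$ kernel, and the bulk of the paper's proof (the computation of the four $\varepsilon$-dependent inner integrals and the dominated-convergence arguments via Lemmas \ref{lem:intinvbtneumanneven} and \ref{lem:fsinvbtneumanneven}) is devoted to showing that the boundary terms produced by that $\varepsilon$-excision vanish in the limit --- an estimate your sketch would still need to supply in whichever formulation you adopt.
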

The following two identities, which are explicit inversion formulas for the weighted spherical mean transform, are essential for the derivation of our reconstruction formula in Theorem \ref{thm:invbtneumanneven}.
\begin{theorem}
	\label{thm:invdvspmeanopeven}
	For $f\in C_c^\infty(\Omega)$ and $x\in\Omega$, the relations
	\begin{equation}
		\label{eq:invndspop1}
		\begin{aligned}
		f(x)=&\frac{2n(-1)^{\frac{n-1}{2}}}{\omega_n\gamma_n^2}\int_{\partial\Omega}\int_0^{T}\left(\partial_r r\left(r^{-1}\partial_r\right)^{n-2} r^{n-2}\M_{\nu}f(y,r)\right)\frac{\log\left(\frac{r+\norm{x-y}}{\abs{r-\norm{x-y}}}\right)}{2\norm{x-y}}\d{r}\d{\sigma(y)}\\
		&- \K_{\Omega} f(x)
		\end{aligned}
	\end{equation}
	and
	\begin{equation}
		\label{eq:invndspop2}
			f(x)=\frac{2n(-1)^{\frac{n-2}{2}}}{\omega_n\gamma_n^2}\int_{\partial\Omega}\pv\int_0^{T}\frac{r\left(r^{-1}\partial_r\right)^{n-2} r^{n-2}\M_{\nu}f(y,r)}{r^2-\norm{x-y}^2}\d{r}\d{\sigma(y)}- \K_{\Omega} f(x).
	\end{equation}
	in even dimensions hold.
\end{theorem}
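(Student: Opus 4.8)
The plan is to reduce the statement to the previously established Neumann inversion formula \eqref{eq:invneumanneven}, which already recovers $f$ from $\nd u$ over the unbounded interval $(0,\infty)$, by re-expressing the Neumann trace $\nd u$ through the weighted spherical mean $\M_{\nu}f$. The decisive gain is that, whereas $\nd u(y,t)$ does not vanish for $t>\diam{\Omega}$ in even dimensions (failure of Huygens' principle), the spherical means do: for $y\in\partial\Omega$ and $f\in C_c^\infty(\Omega)$ one has $\M_{\nu}f(y,r)=0$ both for $r\geq\diam{\Omega}$ (no sphere of radius $r$ about $y$ meets $\supp{f}$) and for $r<\mathrm{dist}(\supp{f},\partial\Omega)$ (small spheres about boundary points avoid $\supp{f}$). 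The first property is what truncates every integral to $(0,T)$ once $T\geq\diam{\Omega}$; the second kills all boundary terms at $r=0$ in the manipulations below. The smoothing term $\K_{\Omega}f$ is inherited verbatim from \eqref{eq:invneumanneven}.

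First I would record the even-dimensional Poisson representation of the wave solution with data $(f,0)$, namely
\[u(x,t)=\frac{n}{\gamma_n}\,\partial_t\bigl(t^{-1}\partial_t\bigr)^{\frac{n-2}{2}}\int_0^t\frac{r^{n-1}\M f(x,r)}{\sqrt{t^2-r^2}}\d{r},\]
and apply the spatial normal derivative, which commutes with everything on the right and turns $\M f$ into $\M_{\nu}f$; this is essentially the content of Proposition \ref{prop:dvsolwaveeqeven}. Applying the operator $\bigl(\partial_t t^{-1}\bigr)^{\frac{n-2}{2}}$ appearing in \eqref{eq:invneumanneven} and using the elementary operator identity $\bigl(\partial_t t^{-1}\bigr)^{k}\partial_t\bigl(t^{-1}\partial_t\bigr)^{k}=\partial_t\bigl(t^{-1}\partial_t\bigr)^{2k}$ with $2k=n-2$, the problem collapses to understanding $\partial_t\bigl(t^{-1}\partial_t\bigr)^{n-2}$ acting on the Abel-type transform $\int_0^t r^{n-1}\M_{\nu}f(y,r)(t^2-r^2)^{-1/2}\d{r}$. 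Here I would invoke the intertwining identity
\[\frac{1}{t}\partial_t\int_0^t\frac{r\,\phi(r)}{\sqrt{t^2-r^2}}\d{r}=\frac{\phi(0)}{t}+\int_0^t\frac{r\,(r^{-1}\partial_r)\phi(r)}{\sqrt{t^2-r^2}}\d{r},\]
proved by the substitution $r=\sqrt{t^2-s^2}$ and differentiation; iterating it $n-2$ times with $\phi(r)=r^{n-2}\M_{\nu}f(y,r)$ (all boundary terms vanish since $\M_{\nu}f(y,\cdot)\equiv0$ near $0$) yields, writing $\Psi(r)\coloneqq(r^{-1}\partial_r)^{n-2}\bigl(r^{n-2}\M_{\nu}f(y,r)\bigr)$,
\[\bigl(\partial_t t^{-1}\bigr)^{\frac{n-2}{2}}\nd u(y,t)=\frac{n}{\gamma_n}\,\partial_t\int_0^t\frac{r\,\Psi(r)}{\sqrt{t^2-r^2}}\d{r}.\]
This Abel identity is what I expect Proposition \ref{prop:solabelinteqwaveeqeven} provides.

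It then remains to substitute this into \eqref{eq:invneumanneven}, interchange the order of the $t$- and $r$-integrations, and carry out the inner $t$-integral. Writing $a\coloneqq\norm{x-y}$ and using $\partial_t\int_0^t r\Psi(r)(t^2-r^2)^{-1/2}\d{r}=t\int_0^t\Psi'(r)(t^2-r^2)^{-1/2}\d{r}$ (the free term $\Psi(0)$ vanishes), the inner integral becomes $\int_{\max(r,a)}^{\infty}t\,[(t^2-a^2)(t^2-r^2)]^{-1/2}\d{t}$, which I would evaluate by the substitution $s=t^2$. Its $r$-independent part diverges logarithmically, but this part is annihilated after integration against $\Psi'$ because $\int_0^\infty\Psi'(r)\d{r}=\Psi(\infty)-\Psi(0)=0$ by the support properties; the surviving finite part equals $-\tfrac12\log\abs{r^2-a^2}$. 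One integration by parts in $r$ (boundary terms again vanishing by support) converts this into the principal-value kernel $\frac{1}{r^2-a^2}$ and yields \eqref{eq:invndspop2}; since $\frac{1}{2a}\log\bigl(\frac{r+a}{\abs{r-a}}\bigr)$ is an antiderivative of $-\frac{1}{r^2-a^2}$, a further integration by parts moving the derivative onto $r\Psi$ produces the logarithmic-kernel form \eqref{eq:invndspop1}. Finally the $r$-range is cut to $(0,T)$ using $\Psi\equiv0$ on $[\diam{\Omega},\infty)$, and the constants combine through the identity $\omega_n\gamma_n=(2\pi)^{n/2}$ (valid in even dimensions) to match the stated prefactor $\frac{2n(-1)^{(n-2)/2}}{\omega_n\gamma_n^2}$.

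The main obstacle is the inner $t$-integral: the interchange of integrations is only conditionally legitimate because the $t$-integral, taken termwise, diverges, and one must argue that its divergent, $r$-independent tail cancels against $\int\Psi'=0$ before extracting the finite logarithmic part --- ideally by a cutoff-and-limit argument rather than a formal manipulation. Tied to this is the careful principal-value bookkeeping at $r=a=\norm{x-y}$, where the kernel is singular, throughout the two integrations by parts. The remaining ingredients --- the Poisson representation, the operator identity, and the Abel intertwining with vanishing boundary terms --- are routine once the support properties of $\M_{\nu}f$ are in hand.
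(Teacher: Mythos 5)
Your proposal is correct in substance and shares the paper's overall strategy: substitute the relation between $\left(\partial_t t^{-1}\right)^{\frac{n-2}{2}}\nd u$ and the weighted spherical means (Proposition \ref{prop:dvsolwaveeqeven} resp.\ the Abel identities of Proposition \ref{prop:solabelinteqwaveeqeven}) into formula \eqref{eq:invneumanneven}, interchange the $t$- and $r$-integrations, evaluate the inner $t$-integral in closed form, and pass between the logarithmic and principal-value kernels by integration by parts, with all boundary terms killed by the support properties of $\M_\nu f(y,\cdot)$. The one substantive difference is where you place the derivatives before invoking Fubini. You push a $\partial_r$ onto $\Psi$, arriving at the inner integral $\int_{\max(r,a)}^\infty t\,[(t^2-a^2)(t^2-r^2)]^{-1/2}\d{t}$, which diverges and forces you into a cutoff-and-renormalization argument in which the divergent, $r$-independent part is cancelled against $\int\Psi'=0$; you correctly flag this as the main obstacle. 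The paper avoids this entirely by using the form \eqref{eq:dvsolwaveeqeven2}, which carries an extra factor $t^{-1}$ inside the $t$-integral: after Fubini the inner integral is $\int_{\max(r,a)}^\infty \bigl(t\sqrt{t^2-\norm{x-y}^2}\sqrt{t^2-r^2}\bigr)^{-1}\d{t}$, which is absolutely convergent, is dominated uniformly, and evaluates via Lemma \ref{lem:intinvsphericalmeanop} directly to the logarithmic kernel of \eqref{eq:invndspop1}; formula \eqref{eq:invndspop2} is then obtained from \eqref{eq:invndspop1} by a single principal-value integration by parts (you derive them in the opposite order, which is immaterial). Your route can be made rigorous --- the cancellation you describe is genuine, since $\Psi$ vanishes near $0$ and beyond $\diam{\Omega}$, and the finite part $-\tfrac12\log\abs{r^2-a^2}$ is locally integrable --- but it trades an unconditional Fubini step for a conditional one, so if you write this up you should either carry out the cutoff argument in full or switch to the paper's representation, which renders the interchange routine. (Your bookkeeping of constants via $\omega_n\gamma_n=(2\pi)^{n/2}$ is correct, and your conclusion that both kernels carry the same sign $(-1)^{\frac{n-2}{2}}$ is consistent with \eqref{eq:invndspopproof1}; the exponent $\frac{n-1}{2}$ appearing in \eqref{eq:invndspop1} is evidently a misprint.)
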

\begin{proof}
	\begin{enumerate}[wide=\parindent,label=(\roman*)]
		\item Inserting relation \eqref{eq:dvsolwaveeqeven2} into to the reconstruction formula \eqref{eq:invneumanneven} lead to
		\begin{equation}
			\label{eq:invndspopproof1}
			f(x)=\frac{2n}{\gamma_n^2\omega_n}(-1)^{\frac{n-2}{2}}\int_{\partial\Omega}\int_{\norm{x-y}}^\infty\int_0^t \frac{r\partial_r r\left(r^{-1}\partial_r\right)^{n-2}r^{n-2}\M_{\nu} f(y,r)}{t\sqrt{t^2-\norm{x-y}^2}\sqrt{t^2-r^2}}\d{r}\d{t}\d{\sigma(y)}-\K_\Omega f(x).
		\end{equation}
		Since $\norm{x-y}<t$ and $0<r<\min\set{T,t}$ for $(t,r)\in(0,\infty)^2$ if and only if $0<r<T$ and $\max\set{r,\norm{x-y}}<t$, we obtain from Fubini's theorem
		\begin{align*}
			\int_{\partial\Omega}\int_{\norm{x-y}}^\infty&\int_0^r \frac{\abs{r\partial_r r\left(r^{-1}\partial_r\right)^{n-2}r^{n-2}\M_{\nu} f(y,r)}}{t\sqrt{t^2-\norm{x-y}^2}\sqrt{t^2-r^2}}\d{r}\d{t}\d{\sigma(y)}\\
			&=\int_{\partial\Omega}\int_0^{T}\int_{\max\set{\norm{x-y},r}}^\infty \frac{\abs{r\partial_r r\left(r^{-1}\partial_r\right)^{n-2}r^{n-2}\M_{\nu} f(y,r)}}{t\sqrt{t^2-\norm{x-y}^2}\sqrt{t^2-r^2}}\d{t}\d{r}\d{\sigma(y)}\\
		&\leq C\int_{\partial\Omega}\int_0^{T}\int_{\max\set{\norm{x-y},r}}^\infty \frac{r}{t\sqrt{t^2-\norm{x-y}^2}\sqrt{t^2-r^2}}\d{t}\d{r}\d{\sigma(y)},
		\end{align*}
		where $C\coloneqq \sup\set{\left.\abs{\partial_r r\left(r^{-1}\partial_r\right)^{n-2}r^{n-2}\M_{\nu} f(y,r)}\,\right\vert (y,r)\in\partial\Omega\times(0,T)}$. Next, from Lemma \ref{lem:intinvsphericalmeanop} we see that the above triple integral equals \[\int_{\partial\Omega}\int_0^{T}\frac{1}{2\norm{x-y}}\log\left(\frac{r+\norm{x-y}}{\abs{r-\norm{x-y}}}\right)\d{r}\d{\sigma(y)}.\] Then, the monotonicity and rules of the logarithmic-function yield the boundedness of the above triple integral. Hence, the application of Fubini's theorem on the triple integral in \eqref{eq:invndspopproof1} and the same calculations as before yield the first identity.
		\item Treating the inner integral in the principle value sense, applying integration by parts and using that $r\left(r^{-1}\partial_r\right)^{n-2}r^{n-2}\M_{\nu} f(y,\cdot)$ has compact support in $(0,T)$ for $y\in\partial\Omega$, we observe that the inner integral on right-hand side in \eqref{eq:invndspop1} is equal to the limit of
		\begin{multline*}
			\left.\frac{r\left(r^{-1}\partial_r\right)^{n-2}r^{n-2}\M_{\nu} f(y,r)}{2c}\log\left(\frac{r+c}{\varepsilon}\right)\right\vert_{r=c-\varepsilon}\\
			-\left.\frac{r\left(r^{-1}\partial_r\right)^{n-2}r^{n-2}\M_{\nu} f(y,r)}{2c}\log\left(\frac{r+c}{\varepsilon}\right)\right\vert_{r=c+\varepsilon}\\
			+\int_{(0,T)\setminus(c-\varepsilon,c+\varepsilon)}\frac{r\left(r^{-1}\partial_r\right)^{n-2}r^{n-2}\M_{\nu} f(y,r)}{r^2-c^2}\d{r}
		\end{multline*}
		as $\varepsilon\searrow 0$, where we set for brevity $c\coloneqq \norm{x-y}$. Expanding the logarithm and applying the mean value theorem, the boundary term can be estimated by the sum of the two terms on right-hand side in
		\begin{multline*}
			\frac{1}{2c}\Bigg|\left.r\left(r^{-1}\partial_r\right)^{n-2}r^{n-2}\M_{\nu} f(y,r)\log(r+c)\right\vert_{r=c-\varepsilon}\\
			-\left.r\left(r^{-1}\partial_r\right)^{n-2}r^{n-2}\M_{\nu} f(y,r)\log(r+c)\right\vert_{r=c+\varepsilon}\Bigg|\\
			\leq\frac{\varepsilon}{c}\sup_{r\in[c-\delta,c+\delta]}\abs{\partial_rr\left(r^{-1}\partial_r\right)^{n-2}r^{n-2}\M_{\nu} f(y,r)\log(r+c)}
		\end{multline*}
		and
		\begin{multline*}
			\frac{\abs{\log(\varepsilon)}}{2c}\abs{\left.r\left(r^{-1}\partial_r\right)^{n-2}r^{n-2}\M_{\nu} f(y,r)\right\vert_{r=c-\varepsilon}-\left.r\left(r^{-1}\partial_r\right)^{n-2}r^{n-2}\M_{\nu} f(y,r)\right\vert_{r=c+\varepsilon}}\\
			\leq\frac{\abs{\varepsilon\log(\varepsilon)}}{c}\sup_{r\in[c-\delta,c+\delta]}\abs{\partial_rr\left(r^{-1}\partial_r\right)^{n-2}r^{n-2}\M_{\nu} f(y,r)},
		\end{multline*}
		for sufficient small $\delta>0$. Hence, letting $\varepsilon\searrow 0$ shows that the limit of the boundary terms equals zero and therefore, equation \eqref{eq:invndspop2} holds.\qedhere
	\end{enumerate}
\end{proof}

\begin{proof}[Proof of Theorem \ref{thm:invbtneumanneven}]
For better readability, we divide the proof into several parts. 
\begin{enumerate}[wide=\parindent,label=(\roman*)]
	\item Inserting relation \eqref{eq:dvsolabelinteqwaveeq2} into the inner integral on the right-hand side in \eqref{eq:invndspop2}, using the definition of the integral in the principle value sense and applying Fubini's theorem lead to
\begin{align*}
	\frac{2\gamma_n}{\pi n}\lim_{\varepsilon\searrow 0}\int_{(0,T)\setminus(c-\varepsilon,c+\varepsilon)}&\int_0^r \frac{r\left(\partial_tt^{-1}\right)^{\frac{n-2}{2}}\nd u(y,t)}{(r^2-c^2)\sqrt{r^2-t^2}}\d{t}\d{r}\\
	=\frac{2\gamma_n}{\pi n}\lim_{\varepsilon\searrow 0}\Bigg(&\int_0^{c-\varepsilon}\int_{I_{\varepsilon,t}}\frac{r\left(\partial_tt^{-1}\right)^{\frac{n-2}{2}}\nd u(y,t)}{(r^2-c^2)\sqrt{r^2-t^2}}\d{r}\d{t}\\
	&+\int_{c-\varepsilon}^c\int_{c+\varepsilon}^{T}\frac{r\left(\partial_tt^{-1}\right)^{\frac{n-2}{2}}\nd u(y,t)}{(r^2-c^2)\sqrt{r^2-t^2}}\d{r}\d{t}\\
	&+\int_c^{T}\int_{\max\set{t,c+\varepsilon}}^{T}\frac{r\left(\partial_tt^{-1}\right)^{\frac{n-2}{2}}\nd u(y,t)}{(r^2-c^2)\sqrt{r^2-t^2}}\d{r}\d{t}\Bigg),
\end{align*}
where we used the abbreviations $c\coloneqq\norm{x-y}$ and $I_{\varepsilon,t}\coloneqq (t,c-\varepsilon]\cup[c+\varepsilon,T)$ for fixed $y\in\partial\Omega$.
	\item In the next step, we compute above inner integrals separately. The first inner integral can be split up into two integrals which, by Lemma \ref{lem:intinvbtneumanneven}, can be evaluated to \[ f_{1,\varepsilon}(t)\coloneqq\int_t^{c-\varepsilon}\frac{r}{(r^2-c^2)\sqrt{r^2-t^2}}\d{r}=\frac{1}{2\sqrt{c^2-t^2}}\log\left(\frac{\sqrt{c^2-t^2}-\sqrt{(c-\varepsilon)^2-t^2}}{\sqrt{c^2-t^2}+\sqrt{(c-\varepsilon)^2-t^2}}\right)\] and
\begin{multline*}
	f_{2,\varepsilon}(t)\coloneqq\int_{c+\varepsilon}^{T}\frac{r}{(r^2-c^2)\sqrt{r^2-t^2}}\d{r}\\
	=\frac{1}{2\sqrt{c^2-t^2}}\log\left(\frac{\sqrt{T^2-t^2}-\sqrt{c^2-t^2}}{\sqrt{T^2-t^2}+\sqrt{c^2-t^2}}\frac{\sqrt{(c+\varepsilon)^2-t^2}+\sqrt{c^2-t^2}}{\sqrt{(c+\varepsilon)^2-t^2}-\sqrt{c^2-t^2}}\right).
\end{multline*}
Again by Lemma \ref{lem:intinvbtneumanneven}, we see that the second inner integral also corresponds to \[f_{3,\varepsilon}(t)\coloneqq \frac{1}{2\sqrt{c^2-t^2}}\log\left(\frac{\sqrt{T^2-t^2}-\sqrt{c^2-t^2}}{\sqrt{T^2-t^2}+\sqrt{c^2-t^2}}\frac{\sqrt{(c+\varepsilon)^2-t^2}+\sqrt{c^2-t^2}}{\sqrt{(c+\varepsilon)^2-t^2}-\sqrt{c^2-t^2}}\right),\] whereas the last inner integral is equal to \[f_{4,\varepsilon}(t)\coloneqq\frac{1}{\sqrt{t^2-c^2}}\left(\arctan\left(\frac{\sqrt{T^2-c^2}}{\sqrt{t^2-c^2}}\right)-\arctan\left(\frac{\sqrt{{\max\set{t,c+\varepsilon}}^2-c^2}}{\sqrt{t^2-c^2}}\right)\right).\]
\item In the last step of the proof, we show
\begin{multline*}
	\lim_{\varepsilon\searrow 0}\int_0^{c-\varepsilon} (f_{1,\varepsilon}(t)+f_{2,\varepsilon}(t))\left(\partial_tt^{-1}\right)^{\frac{n-2}{2}}\nd u(y,t)\d{t}\\
	=\int_0^c \log\left(\frac{\sqrt{T^2-t^2}-\sqrt{c^2-t^2}}{\sqrt{T^2-t^2}+\sqrt{c^2-t^2}}\right)\frac{\left(\partial_tt^{-1}\right)^{\frac{n-2}{2}}\nd u(y,t)}{2\sqrt{c^2-t^2}}\d{t},
\end{multline*}
\begin{equation*}
	\lim_{\varepsilon\searrow 0}\int_{c-\varepsilon}^c f_{3,\varepsilon}(t)\left(\partial_tt^{-1}\right)^{\frac{n-2}{2}}\nd u(y,t)\d{t}=0
\end{equation*}
and
\begin{equation*}
	\lim_{\varepsilon\searrow 0}\int_c^{T} f_{4,\varepsilon}(t)\left(\partial_tt^{-1}\right)^{\frac{n-2}{2}}\nd u(y,t)\d{t}=\int_c^{T}\arctan\left(\frac{\sqrt{T^2-c^2}}{\sqrt{t^2-c^2}}\right)\frac{\left(\partial_tt^{-1}\right)^{\frac{n-2}{2}}\nd u(y,t)}{\sqrt{t^2-c^2}}\d{t},
\end{equation*}
which yield the claimed identity.

First, we write
\begin{multline*}
	 f_{1,\varepsilon}(t)+f_{2,\varepsilon}(t)=\frac{1}{2\sqrt{c^2-t^2}}\Bigg[\log\left(\frac{\sqrt{T^2-t^2}-\sqrt{c^2-t^2}}{\sqrt{T^2-t^2}+\sqrt{c^2-t^2}}\right)\\
	 +\log\left(\frac{\sqrt{c^2-t^2}-\sqrt{(c-\varepsilon)^2-t^2}}{\sqrt{c^2-t^2}+\sqrt{(c-\varepsilon)^2-t^2}}\frac{\sqrt{(c+\varepsilon)^2-t^2}+\sqrt{c^2-t^2}}{\sqrt{(c+\varepsilon)^2-t^2}-\sqrt{c^2-t^2}}\right)\Bigg].
\end{multline*}
Then, from Lemma \ref{lem:fsinvbtneumanneven} we have
\begin{multline*}
	\int_0^{c-\varepsilon}\abs{f_{1,\varepsilon}(t)+f_{2,\varepsilon}(t)}\abs{\left(\partial_tt^{-1}\right)^{\frac{n-2}{2}}\nd u(y,t)}\d{t}\\
	\leq C_1\int_0^c\frac{1}{2\sqrt{c^2-t^2}}\left(\abs{\log\left(\frac{\sqrt{T^2-t^2}-\sqrt{c^2-t^2}}{\sqrt{T^2-t^2}+\sqrt{c^2-t^2}}\right)}+\log(6+\sqrt{2})\right)\d{t},
\end{multline*}
where $C_1\coloneqq \sup\set{\abs{\left.\left(\partial_tt^{-1}\right)^{\frac{n-2}{2}}\nd u(y,t)}\,\right\vert t\in(0,\infty)}$. Hence, Lebesgue's theorem and the second statement in Lemma \ref{lem:intinvbtneumanneven} show the first identity.\\The second limit can be seen to vanish by expanding the logarithm similarly and using the estimate
\begin{multline*}
	\int_{c-\varepsilon}^c\abs{f_{3,\varepsilon}(t)}\abs{\left(\partial_tt^{-1}\right)^{\frac{n-2}{2}}\nd u(y,t)}\d{t}\leq C_1(C_2+\log(6+\sqrt{2}))\int_{c-\varepsilon}^c \frac{1}{2\sqrt{c^2-t^2}}\d{t}\\
	=\frac{C_1(C_2+\log(6+\sqrt{2}))}{2c}\left(\frac{\pi}{2}-\arcsin\left(\frac{c-\varepsilon}{c}\right)\right),
\end{multline*}
where $C_2\coloneqq\sup\set{\left.\abs{\log\left(\frac{\sqrt{T^2-t^2}-\sqrt{c^2-t^2}}{\sqrt{T^2-t^2}+\sqrt{c^2-t^2}}\right)}\,\right\vert t\in[0,c]}$.\\Finally, since $\arctan$ is bounded by $\pi/2$, applying Lebesgue's theorem again on the third limit yield the final statement.\qedhere
\end{enumerate}
\end{proof}
\subsection{Inversion from Dirichlet data on finite time intervals}
Now, we consider the inverse problem for Dirichlet data $u$ on $\partial\Omega\times(0,T)$. By using the back-projection formula for the spherical mean operator in \cite{Hal14} and the ideas for Neumann traces from the previous subsection, we deduce the following result. 
\begin{theorem}
	\label{thm:invdirichletbteven}
	Let $n\geq 2$ be an even number, $f\in C_c^\infty(\Omega)$ be a smooth function with compact support in $\Omega$ and $k_T\colon (0,T)^2\to\R$ be the kernel function as defined in Theorem \ref{thm:invbtneumanneven}. Then, for every $x\in\Omega$, we have
	\begin{equation}
		\label{eq:invdirichletbteven}
		f(x)=\frac{2(-1)^{\frac{n-2}{2}}}{\omega_n\gamma_n}\nabla_x\cdot\int_{\partial\Omega}\nu(y)\int_0^{T} k_T(\norm{x-y},t)\left(\partial_t t^{-1}\right)^{\frac{n-2}{2}}u(y,t)\d{t}\d{\sigma(y)}
		+\K_{\Omega} f(x),
	\end{equation}
	where $\omega_n$ denotes the volume of the $n$-dimensional unit ball and $\gamma_n=2\cdot 4\cdots (n-2)\cdot n$.
\end{theorem}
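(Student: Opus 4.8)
The plan is to follow the proof of Theorem~\ref{thm:invbtneumanneven} step by step, performing three substitutions throughout: the Neumann trace $\nd u$ is replaced by the Dirichlet trace $u$, the weighted spherical mean $\M_\nu f$ by the ordinary spherical mean $\M f$, and the plain boundary integral $\int_{\partial\Omega}(\,\cdot\,)\d{\sigma(y)}$ by the operator $\nabla_x\cdot\int_{\partial\Omega}\nu(y)(\,\cdot\,)\d{\sigma(y)}$, with the sign in front of $\K_\Omega f$ reversed. First I would establish the Dirichlet analogue of the principal-value spherical-mean inversion \eqref{eq:invndspop2}. Starting from the back-projection formula \eqref{eq:invdirichleteven} of \cite{Hal14}, I insert the relation of Proposition~\ref{prop:dvsolwaveeqeven} that expresses $\int_{\norm{x-y}}^{\infty}(t^2-\norm{x-y}^2)^{-1/2}(\partial_t t^{-1})^{\frac{n-2}{2}}u(y,t)\d{t}$ through the ordinary spherical means $\M f(y,\cdot)$, and then repeat the Fubini and principal-value arguments from the proof of Theorem~\ref{thm:invdvspmeanopeven}. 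Since these are carried out for fixed $x$ and concern only the temporal and radial variables, they pass through the $\sigma$-integration and are unaffected by the outer $\nabla_x\cdot$, yielding
\[
f(x)=\frac{2n(-1)^{\frac{n-2}{2}}}{\omega_n\gamma_n^2}\nabla_x\cdot\int_{\partial\Omega}\nu(y)\pv\int_0^{T}\frac{r\left(r^{-1}\partial_r\right)^{n-2}r^{n-2}\M f(y,r)}{r^2-\norm{x-y}^2}\d{r}\d{\sigma(y)}+\K_\Omega f(x).
\]

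Next I would insert the Abel relation of Proposition~\ref{prop:solabelinteqwaveeqeven}, the Dirichlet counterpart of \eqref{eq:dvsolabelinteqwaveeq2}, which writes $r\left(r^{-1}\partial_r\right)^{n-2}r^{n-2}\M f(y,r)$ as an Abel integral of $(\partial_t t^{-1})^{\frac{n-2}{2}}u(y,t)$, into the inner principal-value integral. This relation carries exactly the same kernel as its Neumann counterpart, since $\M_\nu f=\nd\M f$ and the identities commute with the normal derivative in the centre $y$; in particular the pair $(u,\M f)$ obeys the same wave-to-spherical-mean identities as $(\nd u,\M_\nu f)$. Consequently, for fixed $x\in\Omega$ and $y\in\partial\Omega$ and with $c\coloneqq\norm{x-y}$, the inner double integral over $(r,t)$ is identical in form to the one treated in parts (i)--(iii) of the proof of Theorem~\ref{thm:invbtneumanneven}, now with $u(y,\cdot)$ in the role of $\nd u(y,\cdot)$. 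Hence the splitting into $f_{1,\varepsilon},\dots,f_{4,\varepsilon}$ via Lemma~\ref{lem:intinvbtneumanneven}, the domination estimates via Lemma~\ref{lem:fsinvbtneumanneven}, and the dominated-convergence passage $\varepsilon\searrow 0$ transfer verbatim, collapsing the inner integral onto $\int_0^{T}k_T(\norm{x-y},t)(\partial_t t^{-1})^{\frac{n-2}{2}}u(y,t)\d{t}$ and combining the prefactors to $\tfrac{2(-1)^{(n-2)/2}}{\omega_n\gamma_n}$, exactly as in \eqref{eq:invneumannbteven}.

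The step I expect to require the most care is the outer gradient $\nabla_x\cdot$, which has no analogue in the Neumann case. All the temporal manipulations above are performed for fixed $x$ and fixed $y$, so they commute with the $\sigma$-integration and produce, for each $x\in\Omega$, the pointwise identity of vector fields
\[
\int_{\partial\Omega}\nu(y)\int_{\norm{x-y}}^{\infty}\frac{(\partial_t t^{-1})^{\frac{n-2}{2}}u(y,t)}{\sqrt{t^2-\norm{x-y}^2}}\d{t}\d{\sigma(y)}=C_n\int_{\partial\Omega}\nu(y)\int_0^{T}k_T(\norm{x-y},t)(\partial_t t^{-1})^{\frac{n-2}{2}}u(y,t)\d{t}\d{\sigma(y)},
\]
with $C_n$ the explicit constant matching the prefactors of \eqref{eq:invdirichleteven} and \eqref{eq:invdirichletbteven}. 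To pass from this pointwise equality to the equality of the two divergences, I would note that the left-hand field is continuously differentiable on $\Omega$ — this is implicit in the validity of \eqref{eq:invdirichleteven} from \cite{Hal14} — so that the right-hand field inherits the same regularity and $\nabla_x\cdot$ may be applied to both sides. The uniform lower bound $\norm{x-y}\geq\mathrm{dist}(x,\partial\Omega)>0$ for $x\in\Omega$ and $y\in\partial\Omega$ keeps $c$ bounded away from zero on a neighbourhood of each $x$, which, together with the smoothness and compact support of $u$ and $\M f$ and the uniform bounds of Lemma~\ref{lem:fsinvbtneumanneven}, secures this regularity and justifies the interchange. Combining the three steps with the sign convention $+\K_\Omega f$ inherited from \eqref{eq:invdirichleteven} then yields \eqref{eq:invdirichletbteven}.
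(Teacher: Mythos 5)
Your proposal is correct and follows essentially the same route as the paper: the paper likewise reduces to the principal-value back-projection formula for $\M f$ (citing it directly from \cite{Hal14} rather than re-deriving it from \eqref{eq:invdirichleteven}), inserts the Abel relation \eqref{eq:solabelinteqwaveeq2}, and reuses the limit arguments from the proof of Theorem \ref{thm:invbtneumanneven}. Your additional care with the outer $\nabla_x\cdot$ (and the minor point that the relevant solution formula for $u$ itself is \eqref{eq:solwaveeqeven3} rather than Proposition \ref{prop:dvsolwaveeqeven}, which concerns $D_v u$) only adds detail the paper leaves implicit.
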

\begin{proof}
	In \cite{Hal14}, it has been shown that
	\begin{equation*}
			f(x)=\frac{2n(-1)^{\frac{n-2}{2}}}{\omega_n\gamma_n^2}\nabla_x\cdot\int_{\partial\Omega}\nu(y)\ \pv\int_0^{\diam{\Omega}}\frac{r\left(r^{-1}\partial_r\right)^{n-2} r^{n-2}\M f(y,r)}{r^2-\norm{x-y}^2}\d{r}\d{\sigma(y)}+ \K_{\Omega} f(x).
	\end{equation*}
	As we observe in the above formula, the inner integral has the same form to that one in \eqref{eq:invndspop2}. Thus, by inserting \eqref{eq:solabelinteqwaveeq2} into the above equation and using the same arguments as in the proof of Theorem \ref{thm:invbtneumanneven}, we immediately obtain \eqref{eq:invdirichletbteven}.\qedhere
\end{proof}
\subsection{Inversion from mixed data on finite time intervals}
\label{sec:timeboundinvevenmixed}
In this section, we consider the problem of determining the initial data of the wave equation from measurements of the type $u_{a,b}$ on the boundary of open balls $\B_\rho^n(z)$ with radius $\rho>0$ and center $z\in\R^n$. The main result is as follows.
\begin{theorem}
	Let $n\geq 2$ be an even number, $\B_{\rho}^n(z)\subset\R^n$ the open ball with radius $\rho>0$ and center $z\in\R^n$, $f\in C_c^\infty(\B_{\rho}^n(z))$, $a,b\in\R$ with $b\neq 0$ and $k_T\colon (0,T)^2\to\R$ be the kernel function as defined in Theorem \ref{thm:invbtneumanneven}. Then, for every $x\in\B_{\rho}^n(z)$, we have
	\begin{equation}
		\label{eq:invformmixedeven}
		f(x)=\frac{2(-1)^{\frac{n-2}{2}}}{b\omega_n\gamma_n}\int_{\partial\B_{\rho}^n(z)}\int_{\norm{x-y}}^{\infty}\frac{\left(\partial_t t^{-1}\right)^{\frac{n-2}{2}}u_{a,b}(y,t)}{\sqrt{t^2-\norm{x-y}^2}}\d{t}\d{\sigma(y)}
	\end{equation}
	and
	\begin{equation}
		\label{eq:invformmixedbteven}
		f(x)=\frac{2(-1)^{\frac{n-2}{2}}}{b\omega_n\gamma_n}\int_{\partial\B_{\rho}^n(z)}\int_0^{T}k_T(\norm{x-y},t)\left(\partial_t t^{-1}\right)^{\frac{n-2}{2}}u_{a,b}(y,t)\d{t}\d{\sigma(y)},
	\end{equation}
	where $\omega_n$ denotes the volume of the $n$-dimensional unit ball and $\gamma_n=2\cdot 4\cdots (n-2)\cdot n$.
\end{theorem}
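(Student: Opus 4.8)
The plan is to obtain both formulas from the Neumann inversion formulas already at our disposal, using only the linearity of the data in $u_{a,b}=au+b\nd u$ and the fact that $\K_{\Omega}f$ vanishes when $\Omega$ is a ball. Splitting the integrand on the right-hand side of \eqref{eq:invformmixedeven} according to $u_{a,b}=au+b\nd u$ and extracting the prefactor $1/b$, the $b\nd u$-part is, by \eqref{eq:invneumanneven} together with $\K_{\B_{\rho}^n(z)}f=0$, precisely $f(x)$; likewise, for \eqref{eq:invformmixedbteven} the $b\nd u$-part equals $f(x)$ by Theorem \ref{thm:invbtneumanneven}. Thus everything comes down to proving that the remaining $au$-contribution --- the scalar single-layer transform of the Dirichlet trace, carrying neither the weight $\nu(y)$ nor an outer divergence --- vanishes identically on the ball.

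Concretely, I would introduce
\[
A(x)\coloneqq\frac{2(-1)^{\frac{n-2}{2}}}{\omega_n\gamma_n}\int_{\partial\B_{\rho}^n(z)}\int_{\norm{x-y}}^{\infty}\frac{\left(\partial_t t^{-1}\right)^{\frac{n-2}{2}}u(y,t)}{\sqrt{t^2-\norm{x-y}^2}}\d{t}\d{\sigma(y)},
\]
so that the right-hand side of \eqref{eq:invformmixedeven} equals $f(x)+\tfrac{a}{b}A(x)$ and \eqref{eq:invformmixedeven} is equivalent to $A\equiv0$ on $\B_{\rho}^n(z)$. For the finite-time identity I would note that the replacement of the kernel $1/\sqrt{t^2-\norm{x-y}^2}$ on $(\norm{x-y},\infty)$ by $k_T(\norm{x-y},\cdot)$ on $(0,T)$ carried out in the proof of Theorem \ref{thm:invbtneumanneven} relies only on the Abel relation and on $r(r^{-1}\partial_r)^{n-2}r^{n-2}\M f(y,\cdot)$ having compact support in $(0,T)$, which holds here because $\M f(y,r)=0$ for $r\ge\diam{\B_{\rho}^n(z)}\le T$. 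Carrying out the same manipulation with $u$ in place of $\nd u$ shows that the $k_T$-weighted scalar single layer coincides with $A(x)$, so \eqref{eq:invformmixedbteven} reduces to the very same vanishing statement.

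The core of the argument is therefore the lemma $A\equiv0$, and I would prove it in the spirit of Theorem \ref{thm:invdvspmeanopeven}. Inserting the Abel identity \eqref{eq:solabelinteqwaveeq2} into the inner integral of $A$ and applying Fubini exactly as in the passage to \eqref{eq:invndspop2} rewrites $A(x)$, up to a dimensional constant, as the scalar principal-value integral
\[
\int_{\partial\B_{\rho}^n(z)}\pv\int_0^{\diam{\B_{\rho}^n(z)}}\frac{r\left(r^{-1}\partial_r\right)^{n-2}r^{n-2}\M f(y,r)}{r^2-\norm{x-y}^2}\d{r}\d{\sigma(y)}.
\]
This is exactly the expression from \cite{Hal14} quoted in the proof of Theorem \ref{thm:invdirichletbteven}, but stripped of the factor $\nu(y)$ and of the outer $\nabla_x\cdot$, and it remains to show it is zero.

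The hard part will be this final vanishing, since the $\nu$-weighted, divergence-bearing version of the same integral reconstructs $f$ and is certainly not zero; the cancellation must come from the geometry, namely that $x$ and $\supp f$ lie strictly inside $\B_{\rho}^n(z)$ while $y$ ranges over the bounding sphere of radius $\rho$. I expect to make this precise by expanding $\M f(y,\cdot)$ in spherical harmonics about the centre $z$ and invoking a Gegenbauer/Graf addition theorem; this decouples the angular integration over $\partial\B_{\rho}^n(z)$ and separates the interior radii $\norm{x-z}$ and $\norm{w-z}$ (with $w\in\supp f$) from the boundary radius $\rho$, reducing the boundary integral to a residual angular principal value of the form $\pv\int_0^{\pi}(p-q\cos\psi)^{-1}\d{\psi}$ with $|p|<|q|$ forced by $\norm{x-z},\norm{w-z}<\rho$. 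Such a principal value is zero, which is the mechanism behind $A\equiv0$; for $n=2$ this reproduces the circle result of \cite{DreHal20}. Finally I would justify the interchange of the boundary integral with the principal value and the radial integration through the compact support of $\M f(y,\cdot)$ and the smoothness of $f$, exactly as the triple integrals are controlled in the proofs of Theorems \ref{thm:invdvspmeanopeven} and \ref{thm:invbtneumanneven}.
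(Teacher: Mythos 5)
Your reduction is the right one, and it is exactly the skeleton of the paper's proof: write $u_{a,b}=au+b\,\nd u$, observe that the $b\,\nd u$--part reproduces $f$ by \eqref{eq:invneumanneven} (resp.\ Theorem \ref{thm:invbtneumanneven}) since $\K_{\B_\rho^n(z)}f=0$ for balls, and reduce everything to the vanishing of the scalar single-layer transform $A(x)$ of the Dirichlet trace; your $A\equiv 0$ and its $k_T$-weighted analogue are precisely the range conditions \eqref{eq:rangecondeven} and \eqref{eq:rangecondbteven} of Lemmas \ref{lem:rangecondeven} and \ref{lem:rangecondbteven}, and your transfer from the infinite-time kernel to $k_T$ via the Abel relation and the compact support of $r(r^{-1}\partial_r)^{n-2}r^{n-2}\M f(y,\cdot)$ is also how the paper proceeds.

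The genuine gap is in your proof of the core lemma $A\equiv 0$, which you only sketch. The mechanism you propose --- Funk--Hecke/Gegenbauer separation followed by the vanishing of $\pv\int_0^{\pi}(p-q\cos\psi)^{-1}\d{\psi}$ for $\abs{p}<\abs{q}$ --- does not close. With $s\coloneqq\norm{x-z}$ one has $p=r^2-\rho^2-s^2$ and $q=2\rho s$, so $\abs{p}<\abs{q}$ is equivalent to $\rho-s<r<\rho+s$; but if $\supp f\subset\overline{\B_{\rho'}^n(z)}$ with $s<\rho'<\rho$, then $\M f(y,\cdot)$ is supported in $(\rho-\rho',\rho+\rho')$, which strictly contains $(\rho-s,\rho+s)$. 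On the leftover radii the angular integral does \emph{not} vanish (it equals a nonzero multiple of $((r^2-\rho^2-s^2)^2-4\rho^2s^2)^{-1/2}$ already for the zeroth harmonic), so the cancellation cannot be purely angular; it must involve a nontrivial orthogonality between the radial profiles $r(r^{-1}\partial_r)^{n-2}r^{n-2}\M f(y,\cdot)$ (i.e.\ the range description of the spherical mean transform with centers on a sphere) and the Funk--Hecke kernels $\lambda_k(r,s,\rho)$. That identity is exactly what you would still have to prove, and it is the hard content of the lemma. The paper sidesteps it entirely: subtracting the two Finch--Haltmeier--Rakesh inversion formulas \eqref{eq:invformadj1} and \eqref{eq:invformadj2} gives the operator identity $\POp^{*}\partial_t\POp f=0$, and since $\partial_t\POp f$ is the Dirichlet trace of the solution with initial data $(f,0)$, the explicit expression \eqref{eq:adjpeven} for $\POp^{*}$ yields \eqref{eq:rangecondeven} immediately. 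I would replace your harmonic-analysis sketch by this two-line operator argument, after which the rest of your write-up goes through.
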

For the proof, we first derive formula \eqref{eq:invformmixedeven}, which requires measurements for every time point $t>0$. Similarly as in the previous sections, we use then the derived exact inversion formula for unbounded time intervals to establish formula \eqref{eq:invformmixedbteven} that requires only measurements on the finite time interval $(0,T)$.

For the derivation of the first result, we make use of the inversion formulas in even dimensions (see \cite{FinHalRak07})
\begin{align}
	f(x)&=-2(\POp^*t\partial_t^2\POp f)(x)\label{eq:invformadj1}\\
	f(x)&=-2(\POp^*\partial_t t\partial_t\POp f)(x)\label{eq:invformadj2}
\end{align}
for recovering $f\in C_c^\infty(\B^n)$ from $\POp f$. Here, $\POp^{*}$ denotes the formal adjoint operator of $\POp$. We use an analytic expression of the adjoint $\POp^{*}$ for certain functions $F\colon \Sp^{n-1}\times[0,\infty)$ from which we are able to deduce a range condition for the solution of the wave equation with initial data $(f,0)$ presented in Lemma \ref{lem:rangecondeven}.

Note that in \cite{FinHalRak07}, there has been derived an representation of $\POp^{*}$ in the two-dimensional case for continuous functions $F\colon \Sp^{n-1}\times[0,\infty)$ with sufficient small decay, i.e., $F(y,t)=\mathcal{O}(t^{-\alpha})$ as $t\to\infty$ for some $\alpha>0$. For higher dimensions $n>2$ , we additionally assume that $F$ is $(n-2)/2$ times continuously differentiable on $(0,\infty)$ in the second component, $(t^{-1}\partial_t)^kt^{-1}F(y,t)=\mathcal{O}(t^{-\alpha})$ for $0\leq k\leq (n-4)/2$ and $(\partial_t t^{-1})^{(n-2)/2}F(y,t)=\mathcal{O}(t^{-\alpha})$ as $t\to\infty$. Then, the adjoint of $\POp$ can be expressed as (see \cite{FinRak07})
\begin{equation}
	\label{eq:adjpeven}
	\POp^* F(x)=\frac{(-1)^{\frac{n-2}{2}}}{\gamma_n\omega_n}\int_{\Sp^{n-1}}\int_{\norm{x-y}}^\infty \frac{(\partial_t t^{-1})^{(n-2)/2}F(y,t)}{\sqrt{t^2-\norm{x-y}^2}}\d{t}\d{\sigma(y)},\quad x\in\R^n.	
\end{equation}	                                   
\begin{lemma}
	\label{lem:rangecondeven}
	Let $n\geq 2$ be an even number, $\B_{\rho}^n(z)\subset\R^n$ the open ball with radius $\rho>0$ and center $z\in\R^n$ and $f\in C_c^\infty(\B_{\rho}^n(z))$. Then, for every $x\in\B_{\rho}^n(z)$, we have
	\begin{equation}
		\label{eq:rangecondeven}
		0=\int_{\partial\B_{\rho}^n(z)}\int_{\norm{x-y}}^\infty \frac{(\partial_t t^{-1})^{(n-2)/2}u(y,t)}{\sqrt{t^2-\norm{x-y}^2}}\d{t}\d{\sigma(y)}.
	\end{equation}
\end{lemma}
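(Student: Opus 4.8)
The plan is to recognize the double integral in \eqref{eq:rangecondeven} as a nonzero multiple of the back-projection operator $\POp^{*}$ from \eqref{eq:adjpeven} applied to the Dirichlet trace of $u$, and then to show that this particular back-projection vanishes as a direct consequence of the two inversion formulas \eqref{eq:invformadj1} and \eqref{eq:invformadj2}. Since \eqref{eq:adjpeven} is stated on the unit ball, the first step is a reduction by scaling and translation; the second step rests on the elementary operator identity $t\partial_t^2-\partial_t t\partial_t=-\partial_t$, which is where the cancellation comes from.

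First I would reduce to the case $z=0$, $\rho=1$. Writing $\tilde f(\xi)\coloneqq f(z+\rho\xi)$ for $\xi\in\B^n$, the rescaled function $\tilde u(\xi,s)\coloneqq u(z+\rho\xi,\rho s)$ solves \eqref{eq:waveeq} with initial data $(\tilde f,0)$, as one checks directly from the scaling invariance of the wave operator. Substituting $y=z+\rho\eta$ with $\eta\in\Sp^{n-1}$, $x=z+\rho\xi$ with $\xi\in\B^n$ and $t=\rho s$ in \eqref{eq:rangecondeven}, and using that the operator $(\partial_t t^{-1})^{(n-2)/2}$ picks up a factor $\rho^{-(n-2)}$ under $t=\rho s$ while $\d{\sigma(y)}=\rho^{n-1}\d{\sigma(\eta)}$, $\sqrt{t^2-\norm{x-y}^2}=\rho\sqrt{s^2-\norm{\xi-\eta}^2}$ and $\d t=\rho\,\d s$, the right-hand side of \eqref{eq:rangecondeven} becomes a nonzero constant multiple (the powers of $\rho$ combine to $\rho^1$) of
\[
\int_{\Sp^{n-1}}\int_{\norm{\xi-\eta}}^{\infty}\frac{(\partial_s s^{-1})^{(n-2)/2}\tilde u(\eta,s)}{\sqrt{s^2-\norm{\xi-\eta}^2}}\d{s}\d{\sigma(\eta)},
\]
which by \eqref{eq:adjpeven} equals, up to the factor $\gamma_n\omega_n(-1)^{(n-2)/2}$, the value $\POp^{*}\bigl(\restr{\tilde u}{\Sp^{n-1}\times[0,\infty)}\bigr)(\xi)$. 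Hence it suffices to prove that this back-projection is zero for every $\xi\in\B^n$.

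The crux is the following identification. Let $w$ be the solution of \eqref{eq:waveeq} with initial data $(0,\tilde f)$, so that $\POp\tilde f=\restr{w}{\Sp^{n-1}\times[0,\infty)}$ by definition of $\POp$. Differentiating the wave equation in time shows that $\partial_t w$ solves \eqref{eq:waveeq} with initial data $(\tilde f,0)$ (here one uses $w(\cdot,0)=0$, hence $\Delta w(\cdot,0)=0$, to get vanishing initial velocity); by uniqueness $\tilde u=\partial_t w$, and therefore $\restr{\tilde u}{\Sp^{n-1}\times[0,\infty)}=\partial_t\POp\tilde f$. Now both \eqref{eq:invformadj1} and \eqref{eq:invformadj2} reconstruct $\tilde f$, so their difference gives
\[
\POp^{*}\bigl(t\partial_t^2-\partial_t t\partial_t\bigr)\POp\tilde f=0 .
\]
Since $\partial_t t\partial_t=\partial_t+t\partial_t^2$ as operators, the bracket collapses to $-\partial_t$, whence $\POp^{*}\partial_t\POp\tilde f=0$. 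Combined with the previous paragraph this is precisely the vanishing of the back-projection, and the lemma follows.

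The main obstacle I anticipate is not the algebra but the verification that the decay and regularity hypotheses required for the representation \eqref{eq:adjpeven} genuinely apply to $F=\restr{\tilde u}{\Sp^{n-1}\times[0,\infty)}=\partial_t\POp\tilde f$ (recall that in even dimensions the trace does not vanish for large $t$ but only decays). This I would handle by writing $\partial_t\POp\tilde f$ as a linear combination of the two traces $t\partial_t^2\POp\tilde f$ and $\partial_t t\partial_t\POp\tilde f$ to which \eqref{eq:adjpeven} is already legitimately applied inside \eqref{eq:invformadj1} and \eqref{eq:invformadj2}; by linearity of the operators $(t^{-1}\partial_t)^k t^{-1}$ and $(\partial_t t^{-1})^{(n-2)/2}$ the required decay of $F$ as $t\to\infty$ is inherited, so $\POp^{*}$ may be applied termwise and the manipulation above is justified. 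The only remaining bookkeeping is tracking the powers of $\rho$ in the change of variables to confirm the prefactor is nonzero, which is routine.
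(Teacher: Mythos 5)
Your proposal is correct and follows essentially the same route as the paper's proof: subtract the two reconstruction identities \eqref{eq:invformadj1} and \eqref{eq:invformadj2}, use the product rule $\partial_t t\partial_t = \partial_t + t\partial_t^2$ to collapse the difference to $\POp^{*}\partial_t\POp f=0$, identify $\partial_t\POp f$ with the trace of the solution with initial data $(f,0)$, and invoke the explicit expression \eqref{eq:adjpeven}. Your extra care about the decay hypotheses for \eqref{eq:adjpeven} and the explicit $\rho$-scaling only makes more precise what the paper disposes of by citation and by the phrase ``translation and rescaling argument.''
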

\begin{proof}
	In the following, we assume without loss of generality that $z=0$ and $\rho=1$. The remaining statement follows from a translation and rescaling argument. Note that from the product rule we have $\partial_t t\partial_t\POp=\partial_t\POp f+t\partial_t^2\POp f$ and therefore $\partial_t\POp f=\partial_tt\partial_t\POp f-t\partial_t^2\POp f$. Hence, subtracting \eqref{eq:invformadj2} from \eqref{eq:invformadj1} gives \[0=\POp^*(\partial_t t\partial_t\POp f-t\partial_t^2\POp f)(x)=(\POp^*\partial_t\POp f)(x).\] Since $\partial_t\POp f$ fulfills the conditions for the analytic expression of $\POp^*$ in \eqref{eq:adjpeven} (see, for example, \eqref{eq:solwaveeqeven} and \cite[Lemma 3.4]{DreHal21}), we deduce \[0=\int_{\Sp^{n-1}}\int_{\norm{x-y}}^\infty \frac{(\partial_t t^{-1})^{(n-2)/2}\partial_t\POp f(y,t)}{\sqrt{t^2-\norm{x-y}^2}}\d{t}\d{\sigma(y)}.\] From \eqref{eq:waveeq} we easily see that $\partial_t\POp f$ solves the wave equation with initial data $(f,0)$ and therefore, \eqref{eq:rangecondeven} is proved.\qedhere
\end{proof}
\begin{proof}[Proof of formula \eqref{eq:invformmixedeven}]
	From \eqref{eq:invneumanneven} we have
	\begin{equation}
		\label{eq:prinvformmixedeven}
		f(x)=\frac{2(-1)^{\frac{n-2}{2}}}{b\omega_n\gamma_n}\int_{\partial\B_{\rho}^n(z)}\int_{\norm{x-y}}^{\infty}\frac{\left(\partial_t t^{-1}\right)^{\frac{n-2}{2}}b\nd u(y,t)}{\sqrt{t^2-\norm{x-y}^2}}\d{t}\d{\sigma(y)}.
	\end{equation}
	Furthermore, \eqref{eq:rangecondeven} implies
	\begin{equation}
		\label{eq:prrangecondeven}
		0=\frac{2(-1)^{\frac{n-2}{2}}}{b\omega_n\gamma_n}\int_{\partial\B_{\rho}^n(z)}\int_{\norm{x-y}}^\infty \frac{(\partial_t t^{-1})^{(n-2)/2}au(y,t)}{\sqrt{t^2-\norm{x-y}^2}}\d{t}\d{\sigma(y)}.
	\end{equation}
	Hence, adding \eqref{eq:prinvformmixedeven} and \eqref{eq:prrangecondeven} leads to \eqref{eq:invformmixedeven}. 
\end{proof}
As a consequence of Lemma \eqref{lem:rangecondeven}, we obtain the following range conditions for the spherical mean operator in even dimensions.
\begin{lemma}
	Let $f\in C_c^\infty(\B_{\rho}^n(z))$ be a smooth function with compact support in the open ball with radius $\rho>0$ and center $z\in\R^n$. Then, for every $x\in\B_{\rho}^n(z)$, the range conditions
	\begin{equation}
		\label{eq:rangecondspopeven1}
		\begin{aligned}
		0=\int_{\partial\B_{\rho}^n(z)}\int_0^{T}\left(\partial_r r\left(r^{-1}\partial_r\right)^{n-2} r^{n-2}\M f(y,r)\right)\frac{\log\left(\frac{r+\norm{x-y}}{\abs{r-\norm{x-y}}}\right)}{2\norm{x-y}}\d{r}\d{\sigma(y)}
		\end{aligned}
	\end{equation}
	and
	\begin{equation}
		\label{eq:rangecondspopeven2}
		\begin{aligned}
			0=\int_{\partial\B_{\rho}^n(z)}\pv\int_0^{T}\frac{r\left(r^{-1}\partial_r\right)^{n-2} r^{n-2}\M f(y,r)}{r^2-\norm{x-y}^2}\d{r}\d{\sigma(y)}
		 \end{aligned}
	\end{equation}
	for the spherical mean operator in even dimensions hold.
\end{lemma}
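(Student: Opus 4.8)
The plan is to obtain \eqref{eq:rangecondspopeven1} and \eqref{eq:rangecondspopeven2} from the range condition \eqref{eq:rangecondeven} of Lemma \ref{lem:rangecondeven} by exactly the same chain of manipulations that produced \eqref{eq:invndspop1} and \eqref{eq:invndspop2} from the reconstruction formula \eqref{eq:invneumanneven} in Theorem \ref{thm:invdvspmeanopeven}. The right-hand side of \eqref{eq:rangecondeven} is structurally identical to the back-projection integral in \eqref{eq:invneumanneven}, with the Neumann trace $\nd u$ replaced by the Dirichlet trace $u$ and with value $0$ in place of $f(x)+\K_\Omega f(x)$. Since passing from a back-projection integral of a trace to the associated spherical-mean integrals is a linear operation that never touches the value on the left, carrying it out on \eqref{eq:rangecondeven} yields precisely \eqref{eq:rangecondspopeven1} and \eqref{eq:rangecondspopeven2}, now with the unweighted spherical mean $\M f$ in place of $\M_\nu f$ and with $0$ on the left.

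Concretely, I would first insert the relation that expresses the Dirichlet trace via the spherical means, namely the analogue of \eqref{eq:dvsolwaveeqeven2} for $u$, which is relation \eqref{eq:solwaveeqeven}: it writes $(\partial_t t^{-1})^{\frac{n-2}{2}}u(y,t)$ as an Abel-type integral of $\partial_r r\left(r^{-1}\partial_r\right)^{n-2}r^{n-2}\M f(y,r)$ against the kernel $1/(t\sqrt{t^2-r^2})$. Substituting this into the right-hand side of \eqref{eq:rangecondeven} produces the exact analogue of \eqref{eq:invndspopproof1}, a triple integral over $\partial\B_{\rho}^n(z)$ and over the region where $\norm{x-y}<t$ and $0<r<t$, with $\M f$ in place of $\M_\nu f$ and with $0$ on the left. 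I would then justify Fubini's theorem as in part (i) of the proof of Theorem \ref{thm:invdvspmeanopeven}: the factor $\partial_r r\left(r^{-1}\partial_r\right)^{n-2}r^{n-2}\M f(y,r)$ is bounded on $\partial\B_{\rho}^n(z)\times(0,T)$, and after the change in order of integration the remaining kernel integral is finite by Lemma \ref{lem:intinvsphericalmeanop}. Interchanging the order and evaluating the $t$-integral via Lemma \ref{lem:intinvsphericalmeanop} gives \eqref{eq:rangecondspopeven1} up to an overall nonzero constant, which may be dropped since the left-hand side is $0$. Finally, the principal-value and integration-by-parts argument of part (ii) applies verbatim — rewriting the logarithmic integral as a principal value, integrating by parts, and letting $\varepsilon\searrow 0$ so that the boundary terms at $r=\norm{x-y}\pm\varepsilon$ vanish by the mean value theorem — converting \eqref{eq:rangecondspopeven1} into \eqref{eq:rangecondspopeven2}.

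The only place where the transcription is not purely formal, and hence the main point to check, is that the two structural hypotheses behind these manipulations hold for the unweighted spherical mean. These are the boundedness of $\partial_r r\left(r^{-1}\partial_r\right)^{n-2}r^{n-2}\M f(y,r)$ on $\partial\B_{\rho}^n(z)\times(0,T)$ and, for the principal-value step, the compact support of $r\mapsto r\left(r^{-1}\partial_r\right)^{n-2}r^{n-2}\M f(y,r)$ in $(0,T)$ for each fixed $y\in\partial\B_{\rho}^n(z)$. Both follow from $f\in C_c^\infty(\B_{\rho}^n(z))$ together with $T\geq\diam{\B_{\rho}^n(z)}=2\rho$: smoothness and compact support of $f$ make $\M f$ smooth with all the indicated derivatives bounded, while for a boundary point $y$ the sphere of radius $r$ about $y$ misses $\supp f$ both when $r$ is smaller than the distance from $\supp f$ to $\partial\B_{\rho}^n(z)$ and when $r>2\rho$, so $\M f(y,\cdot)$ — and hence the expression above — is supported in a compact subinterval of $(0,T)$. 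With these two facts in hand, the estimates and limit passages from Theorem \ref{thm:invdvspmeanopeven} transfer without change and establish both range conditions.
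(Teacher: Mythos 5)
Your proposal is correct and follows essentially the same route as the paper, which simply inserts the solution formula \eqref{eq:solwaveeqeven3} (with $g=0$ and $k=(n-2)/2$) into the range condition \eqref{eq:rangecondeven} and repeats the Fubini and principal-value arguments of Theorem \ref{thm:invdvspmeanopeven}; your explicit verification of the boundedness and compact-support hypotheses for the unweighted spherical mean is a welcome elaboration of what the paper leaves implicit. The only slip is a label: the identity you describe is \eqref{eq:solwaveeqeven3}, not \eqref{eq:solwaveeqeven}.
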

\begin{proof}
	The statements follows from inserting formula \eqref{eq:solwaveeqeven3} into \eqref{eq:rangecondeven} and analogous calculations as in the proof of Theorem \ref{thm:invdvspmeanopeven}.
\end{proof}
The last lemma in this section presents a range condition for the solution of the wave equation on the bounded manifold $\partial\B_{\rho}^n(z)\times(0,T)$, which is the key ingredient for the derivation of the second formula in Theorem \ref{thm:invbtneumanneven}.
\begin{lemma}
	\label{lem:rangecondbteven}
	Let $n\geq 2$ be an even number, $\B_{\rho}^n(z)\subset\R^n$ the open ball with radius $\rho>0$ and center $z\in\R^n$ and $f\in C_c^\infty(\B_{\rho}^n(z))$. Then, for every $x\in\B_{\rho}^n(z)$, we have
	\begin{equation}
		\label{eq:rangecondbteven}
		0=\int_{\partial\B_{\rho}^n(z)}\int_0^{T}k_T(\norm{x-y},t)\left(\partial_t t^{-1}\right)^{\frac{n-2}{2}} u(y,t)\d{t}\d{\sigma(y)}.\\
	\end{equation}
\end{lemma}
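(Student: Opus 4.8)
The plan is to recognise that the bounded-time range condition \eqref{eq:rangecondbteven} stands to the unbounded-time range condition \eqref{eq:rangecondeven} in exactly the same relation as the bounded-time Neumann formula \eqref{eq:invneumannbteven} stands to the unbounded-time Neumann formula \eqref{eq:invneumanneven}. Indeed, the spherical-mean reformulation \eqref{eq:rangecondspopeven2} of the range condition has literally the same shape as the Neumann identity \eqref{eq:invndspop2}, the only differences being that the plain spherical mean $\M f$ replaces the weighted mean $\M_\nu f$, that the Dirichlet trace $u$ will play the role of the Neumann trace $\nd u$, and that the right-hand side is $0$ rather than $f(x)+\K_\Omega f(x)$. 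I would therefore prove the lemma by running the bounded-time conversion from the proof of Theorem \ref{thm:invbtneumanneven} verbatim, but starting from \eqref{eq:rangecondspopeven2} instead of from \eqref{eq:invndspop2}.

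Concretely, I would first take the already established range condition \eqref{eq:rangecondspopeven2} and insert into its inner principal-value integral the Dirichlet Abel relation \eqref{eq:solabelinteqwaveeq2}, which expresses $r(r^{-1}\partial_r)^{n-2}r^{n-2}\M f(y,r)$ as a $\frac{2\gamma_n}{\pi n}$-multiple of $\int_0^r \frac{r(\partial_t t^{-1})^{(n-2)/2}u(y,t)}{\sqrt{r^2-t^2}}\d{t}$ --- exactly the $u$-analogue of the relation \eqref{eq:dvsolabelinteqwaveeq2} used for $\nd u$ in step (i) of that proof. Writing the principal value as the $\varepsilon\searrow 0$ limit over $(0,T)\setminus(c-\varepsilon,c+\varepsilon)$ with $c=\norm{x-y}$ and applying Fubini splits the double integral into the same three pieces as in step (i), whose inner $r$-integrals are evaluated through Lemma \ref{lem:intinvbtneumanneven} into the very functions $f_{1,\varepsilon},f_{2,\varepsilon},f_{3,\varepsilon},f_{4,\varepsilon}$ appearing there. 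These depend only on $c$, $t$, $\varepsilon$ and $T$, so they are left unchanged by the substitution of $u$ for $\nd u$.

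Next I would pass to the limit $\varepsilon\searrow 0$ exactly as in step (iii): the contribution of $f_{3,\varepsilon}$ vanishes, while $f_{1,\varepsilon}+f_{2,\varepsilon}$ and $f_{4,\varepsilon}$ converge, by Lebesgue's theorem, to the $\log$- and $\arctan$-integrals that together form $\tilde k_T(c,t)/\sqrt{\abs{c^2-t^2}}$ over $t\in(0,c)$ and $t\in(c,T)$ respectively, that is, to $\frac{\pi}{2}k_T(c,t)$. Collecting the Abel factor $\frac{2\gamma_n}{\pi n}$ with the factor $\frac{\pi}{2}$ produces a nonzero multiplicative constant in front of $\int_{\partial\B_{\rho}^n(z)}\int_0^T k_T(\norm{x-y},t)(\partial_t t^{-1})^{(n-2)/2}u(y,t)\d{t}\d{\sigma(y)}$. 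Since the left-hand side was $0$, this constant divides out and \eqref{eq:rangecondbteven} follows.

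The only point that genuinely needs checking --- and where I expect the bookkeeping to concentrate --- is that the Fubini interchange and the dominated-convergence passages of steps (i) and (iii) remain valid with $u$ in the integrand. This amounts to confirming that $(\partial_t t^{-1})^{(n-2)/2}u(y,\cdot)$ is bounded, so that the analogue of the constant $C_1$ from the proof of Theorem \ref{thm:invbtneumanneven} stays finite with $u$ in place of $\nd u$, and that $r(r^{-1}\partial_r)^{n-2}r^{n-2}\M f(y,\cdot)$ has compact support in $(0,T)$, so that the principal value and the integration-by-parts boundary terms behave as before. The latter holds because $\M f(y,r)$ vanishes for $r\ge\diam{\B_{\rho}^n(z)}\le T$ when $f$ is compactly supported, and the former is precisely the decay already used to justify \eqref{eq:rangecondeven}. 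With these two estimates in hand the argument is a verbatim transcription of the Neumann proof, so I anticipate no essentially new difficulty beyond this verification.
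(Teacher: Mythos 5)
Your proposal is correct and follows the paper's own argument: the paper likewise proves the lemma by observing that the inner integral in \eqref{eq:rangecondspopeven2} has the same form as in \eqref{eq:invndspop2}, inserting the Abel relation \eqref{eq:solabelinteqwaveeq2}, and repeating the $\varepsilon$-limit computation from the proof of Theorem \ref{thm:invbtneumanneven} with $u$ and $\M f$ in place of $\nd u$ and $\M_\nu f$. Your additional checks (boundedness of $(\partial_t t^{-1})^{(n-2)/2}u(y,\cdot)$ and compact support of $r(r^{-1}\partial_r)^{n-2}r^{n-2}\M f(y,\cdot)$ in $(0,T)$) are exactly the points the paper implicitly relies on.
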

\begin{proof}
	Similarly to the proof of Theorem \ref{thm:invbtneumanneven}, we observe that inner integral in the range condition of \eqref{eq:rangecondspopeven2} has the same form as in \eqref{eq:invndspop2}. Thus, by inserting the relation for the spherical mean transform \eqref{eq:solabelinteqwaveeq2} into \eqref{eq:rangecondspopeven2} and using the same arguments as in the proof for Neumann traces, the right-hand side in \eqref{eq:rangecondspopeven2} can be transformed to the double integral in \eqref{eq:rangecondbteven}.
\end{proof}
\begin{proof}[Proof of formula \eqref{eq:invformmixedeven}]
	Again, this follows from an addition of \eqref{eq:invneumannbteven} and \eqref{eq:rangecondbteven}.
\end{proof}
\section{Inversion in odd dimensions}
\label{sec:timeboundinvodd}
Subsequently to the previous section, where the even-dimensional case has been studied, we now present new results for recovering the initial data of the wave equation from measurements on finite time intervals in odd dimensions. As already mentioned at the beginning of this article, we only study the inverse problem for mixed traces, since existing filtered backprojection formulas for Neumann and Dirichlet traces require only finite time measurements. Again, we consider open balls $\B_\rho^n(z)$ and their boundary as their measurement surface.
\subsection{Inversion from mixed data on finite time intervals}
The inverse problem of determining $f$ from the wave data $u_{a,b}$ on the boundary of open balls $\B_\rho^n(z)$ between the time points zero and $2\rho$ can be solved similarly to the even-dimensional case. The main statement reads as follows.
\begin{theorem}
	\label{thm:invformmixedbtodd}
	Let $n\geq 3$ be an odd number, $\B_{\rho}^n(z)\subset\R^n$ the open ball with radius $\rho>0$ and center $z\in\R^n$, $f\in C_c^\infty(\B_{\rho}^n(z))$ and $a,b\in\R$ with $b\neq 0$. Then, for every $x\in\B_{\rho}^n(z)$, we have
	\begin{equation}
		\label{eq:invformmixedbtodd}
		f(x)=\frac{2(-1)^{\frac{n-3}{2}}}{bn\gamma_n\omega_n}\int_{\partial\B_{\rho}^n(z)} \left(\left(t^{-1}\partial_t\right)^{\frac{n-3}{2}}t^{-1}\right)u_{a,b}(y,\norm{x-y})\d{\sigma(y)},
	\end{equation}
	where $\omega_n$ denotes the volume of the $n$-dimensional unit ball and $\gamma_n=1\cdot 3\cdots (n-2)$.
\end{theorem}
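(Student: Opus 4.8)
The plan is to mirror the even-dimensional derivation of formula \eqref{eq:invformmixedeven}. I would split the mixed trace as $u_{a,b}=au+b\nd u$, recover $f$ from the Neumann part $b\nd u$ via the odd-dimensional back-projection formula \eqref{eq:invneumannodd}, and show that the Dirichlet part $au$ contributes nothing by establishing an odd-dimensional range condition, the exact analogue of Lemma \ref{lem:rangecondeven}. Since the domain is a ball we have $\K_\Omega f=0$, so no smoothing term survives and the formula is exact. Note that in odd dimensions the integrand is evaluated only at the single time $t=\norm{x-y}\le\diam{\B_\rho^n(z)}$, so no truncation and no kernel $k_T$ are needed; the finite-time character is automatic from the strong Huygens principle, which is why this section treats only mixed traces.

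For the Neumann part I would start from \eqref{eq:invneumannodd} with $\Omega=\B_\rho^n(z)$ and $\K_\Omega f=0$, and reconcile its constant with the normalization of the statement. Using the odd-dimensional ball volume $\omega_n=2^{(n+1)/2}\pi^{(n-1)/2}/(1\cdot 3\cdots n)$ together with $n\gamma_n=1\cdot 3\cdots n$ for $\gamma_n=1\cdot 3\cdots(n-2)$, one checks $n\gamma_n\omega_n=2^{(n+1)/2}\pi^{(n-1)/2}$, whence $\tfrac{2}{n\gamma_n\omega_n}=\tfrac{1}{(2\pi)^{(n-1)/2}}$. Writing $\nd u=\tfrac{1}{b}(b\,\nd u)$ then identifies the term $\tfrac{2(-1)^{(n-3)/2}}{bn\gamma_n\omega_n}\int_{\partial\B_\rho^n(z)}\bigl((t^{-1}\partial_t)^{(n-3)/2}t^{-1}\bigr)(b\,\nd u)(y,\norm{x-y})\d{\sigma(y)}$ with $f(x)$.

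It then remains to prove the odd range condition
\[
0=\int_{\partial\B_\rho^n(z)}\left((t^{-1}\partial_t)^{\frac{n-3}{2}}t^{-1}\right)u(y,\norm{x-y})\d{\sigma(y)},\quad x\in\B_\rho^n(z).
\]
Following Lemma \ref{lem:rangecondeven}, I would invoke the odd-dimensional counterparts of the universal identities $f=-2(\POp^*t\partial_t^2\POp f)$ and $f=-2(\POp^*\partial_t t\partial_t\POp f)$, subtract them, and use $\partial_t t\partial_t\POp f-t\partial_t^2\POp f=\partial_t\POp f$ to get $\POp^*\partial_t\POp f=0$. Since $\partial_t\POp f$ is the Dirichlet trace on $\Sp^{n-1}$ of the solution of \eqref{eq:waveeq} with initial data $(f,0)$, substituting the odd back-projection form of $\POp^*$ — the counterpart of \eqref{eq:adjpeven}, with kernel $(t^{-1}\partial_t)^{(n-3)/2}t^{-1}$ evaluated at $t=\norm{x-y}$ — yields the displayed identity on the unit ball, and a translation and rescaling argument transfers it to $\B_\rho^n(z)$. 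The hard part will be securing these two ingredients: the validity of the universal inversion identities in odd dimensions and the precise odd back-projection expression for $\POp^*$ together with its constant. Once they are in place, the decay and smoothness hypotheses required to apply the adjoint expression are automatic, because $u(y,\cdot)$ has compact support in time by strong Huygens. Finally, adding $a/b$ times this vanishing identity (carrying the common constant) to the Neumann part recombines the two traces into $u_{a,b}$ and yields \eqref{eq:invformmixedbtodd}.
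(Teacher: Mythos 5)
Your proposal follows essentially the same route as the paper: the Neumann part is handled by the odd-dimensional back-projection formula on the ball (the paper cites \cite[Theorem 4.1]{DreHal21}, which is \eqref{eq:invneumannodd} with $\K_\Omega f=0$; your constant reconciliation $\tfrac{2}{n\gamma_n\omega_n}=\tfrac{1}{(2\pi)^{(n-1)/2}}$ is correct), and the Dirichlet part is annihilated by exactly the range condition you describe, obtained by subtracting the two universal Finch--Rakesh identities and inserting the explicit adjoint. The only imprecision is in the external inputs you flag as "the hard part": the odd-dimensional universal identities \eqref{eq:invformadjodd1}--\eqref{eq:invformadjodd2} use $\NOp^{*}$ composed with the filter $t\D^{(n-3)/2}t^{-1}$ acting on $\partial_t^2 t\POp f$ and $\partial_t t\partial_t\POp f$ (so the relevant operator identity is $\partial_t^2 t-\partial_t t\partial_t=\partial_t$ rather than the even-dimensional $\partial_t t\partial_t-t\partial_t^2=\partial_t$ you quote), but once these are taken from \cite{FinRak07,FinPatRak04} your argument is the paper's.
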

The proof of Theorem \ref{thm:invformmixedbtodd} is based on the reconstruction formulas (see \cite{FinRak07})
\begin{align}
	f(x)&=c_n(\NOp^*t\D^{(n-3)/2}t^{-1}\partial_t^2t\POp f)(x)\label{eq:invformadjodd1}\\
	f(x)&=c_n(\NOp^*t\D^{(n-3)/2}t^{-1}\partial_t t\partial_t\POp f)(x),\label{eq:invformadjodd2}
\end{align}
for $x\in\B^n$, which are valid in odd dimensions for recovering $f\in C_c^\infty(\B^n)$ from $\POp f$. Here, $\NOp^*$ denotes the formal $L^2$ adjoint of $\NOp$, $\D\coloneqq\frac{1}{2t}\partial_t$ and $c_n\coloneqq\frac{(-1)^{(n-1)/2}\sqrt{\pi}}{\Gamma(n/2)}$. In \cite{FinPatRak04}, there has been derived the explicit expression
\begin{equation}
	\label{eq:adjn}
	\NOp^*F(x)=\frac{1}{n\omega_n}\int_{\Sp^{n-1}}\frac{F(y,\norm{x-y})}{\norm{x-y}}\d{\sigma(y)},\quad x\in\R^n,
\end{equation}
provided that $F\colon\Sp^{n-1}\times[0,\infty)\to\R$ is smooth and zero to infinite order in the time variable at $t=0$. Equation \eqref{eq:adjn} and reconstruction formulas \eqref{eq:invformadjodd1}, \eqref{eq:invformadjodd2} can be used to deduce the following range condition for the Dirichlet trace in odd dimensions, which looks slightly different to that one in even dimensions.
\begin{lemma}
	\label{lem:rangecondbtodd}
	Let $n\geq 3$ be an odd number, $\B_{\rho}^n(z)\subset\R^n$ the open ball with radius $\rho>0$ and center $z\in\R^n$ and $f\in C_c^\infty(\B_{\rho}^n(z))$. Then, for every $x\in\B_{\rho}^n(z)$, we have
	\begin{equation}
		\label{eq:rangecondbtodd}
		0=\int_{\partial\B_{\rho}^n(z)}\left(\left(t^{-1}\partial_t\right)^{\frac{n-3}{2}}t^{-1}\right)u(y,\norm{x-y})\d{\sigma(y)}.
	\end{equation}
\end{lemma}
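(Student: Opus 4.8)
The plan is to mirror the proof of the even-dimensional range condition in Lemma \ref{lem:rangecondeven}, now starting from the two odd-dimensional reconstruction formulas \eqref{eq:invformadjodd1} and \eqref{eq:invformadjodd2}. As there, I would first reduce to the case $z=0$, $\rho=1$ by a translation and rescaling argument, so that the operators $\POp$, $\NOp^*$ and the expression \eqref{eq:adjn} all refer to the unit ball. Since both reconstruction formulas have left-hand side $f(x)$, subtracting them annihilates $f$ and leaves
\[
0=c_n\,\NOp^*\!\left(t\D^{(n-3)/2}t^{-1}\bigl(\partial_t^2 t-\partial_t t\partial_t\bigr)\POp f\right)(x).
\]

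The key algebraic step is the operator identity $\partial_t^2(tw)-\partial_t(t\partial_t w)=\partial_t w$, valid for any smooth $w$, which is the odd-dimensional counterpart of the product-rule manipulation applied to $\partial_t t\partial_t\POp$ in the even case. Taking $w=\POp f$ collapses the argument of $\NOp^*$ to $t\D^{(n-3)/2}t^{-1}\partial_t\POp f$, so that
\[
0=c_n\,\NOp^*\!\left(t\D^{(n-3)/2}t^{-1}\partial_t\POp f\right)(x).
\]
Next I would identify $\partial_t\POp f$ with the Dirichlet trace of $u$: if $v$ denotes the free-space solution with initial data $(0,f)$, then $\POp f=\restr{v}{\Sp^{n-1}\times[0,\infty)}$, and from \eqref{eq:waveeq} one checks that $\partial_t v$ solves the wave equation with initial data $(f,0)$, because $\partial_t v(\cdot,0)=f$ and $\partial_t^2 v(\cdot,0)=\Delta v(\cdot,0)=0$. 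Hence $\partial_t\POp f=\restr{u}{\Sp^{n-1}\times[0,\infty)}$, exactly the trace appearing in \eqref{eq:rangecondbtodd}.

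Finally I would insert the analytic expression \eqref{eq:adjn} for $\NOp^*$ with $F=t\D^{(n-3)/2}t^{-1}u$ and simplify. Using $\D^{(n-3)/2}=2^{-(n-3)/2}(t^{-1}\partial_t)^{(n-3)/2}$ together with the factor $1/\norm{x-y}$ in \eqref{eq:adjn} to cancel the outer $t$, the integrand becomes a nonzero constant times $\left((t^{-1}\partial_t)^{(n-3)/2}t^{-1}\right)u(y,\norm{x-y})$; dividing through by that constant yields \eqref{eq:rangecondbtodd}, and the general ball follows by undoing the rescaling.

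The step requiring the most care—and the only genuine obstacle—is verifying that $F$ meets the hypotheses of \eqref{eq:adjn}, namely that it is smooth on $\Sp^{n-1}\times[0,\infty)$ and vanishes to infinite order in $t$ at $t=0$, so that the repeated factors $t^{-1}$ cause no singularity. This follows from finite speed of propagation: since $f$ has compact support strictly inside $\B^n$, for $y\in\Sp^{n-1}$ the trace $u(y,t)$ vanishes for $t$ below the positive distance from $y$ to $\supp{f}$ and is smooth; therefore $t^{-1}u$ and all compositions $(t^{-1}\partial_t)^k$ thereof extend smoothly and flatly across $t=0$, so that $F$ is smooth and satisfies the required decay as in the even case. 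Once this is in place, the remaining computations are routine.
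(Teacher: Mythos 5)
Your proposal is correct and follows essentially the same route as the paper: subtract \eqref{eq:invformadjodd2} from \eqref{eq:invformadjodd1}, use the product-rule identity $\partial_t^2 t-\partial_t t\partial_t=\partial_t$ to reduce the argument of $\NOp^*$ to $t\D^{(n-3)/2}t^{-1}\partial_t\POp f$, identify $\partial_t\POp f$ with the trace of $u$, and evaluate via \eqref{eq:adjn}. The extra care you take with the hypotheses of \eqref{eq:adjn} and the cancellation of constants is consistent with, and slightly more explicit than, the paper's argument.
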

\begin{proof}
	As in the even-dimensional case, we deduce from the product rule the relation $\partial_t\POp f=\partial_t^2t\POp f-\partial_tt\partial_t\POp f$. Then, subtracting \eqref{eq:invformadjodd2} from \eqref{eq:invformadjodd1} gives
	\begin{align*}
		0&=\NOp^{*}(t\D^{(n-3)/2}t^{-1}\partial_t^2t\POp f-t\D^{(n-3)/2}t^{-1}\partial_t t\partial_t\POp f)(x)\\
		&=\NOp^{*}(t\D^{(n-3)/2}t^{-1}\partial_t\POp f)(x).
	\end{align*}
	Finally, using the explicit expression of \eqref{eq:adjn} leads to
	\begin{align*}
		0&=\int_{\partial\B_{\rho}^n(z)} \frac{(t\D^{(n-3)/2}t^{-1}\partial_t\POp f)(y,\norm{x-y})}{\norm{x-y}}\d{\sigma(y)}\\
		&=\int_{\partial\B_{\rho}^n(z)}\left(\left(t^{-1}\partial_t\right)^{\frac{n-3}{2}}t^{-1}\right)u(y,\norm{x-y})\d{\sigma(y)},
	\end{align*}
	where we used again in the last step that $\partial_t\POp f$ is the unique solution of \eqref{eq:waveeq} with initial data $(f,0)$.
\end{proof}
\begin{proof}[Proof of Theorem \ref{thm:invformmixedbtodd}]
	In \cite[Theorem 4.1]{DreHal21}, it has been shown \[f(x)=\frac{2(-1)^{\frac{n-3}{2}}}{bn\gamma_n\omega_n}\int_{\partial\B_{\rho}^n(z)}\left(\left(t^{-1}\partial_t\right)^{\frac{n-3}{2}}t^{-1}\right)b\nd u(y,\norm{x-y})\d{\sigma(y)}.\] Then, using \eqref{eq:rangecondbtodd} leads to \[f(x)=\frac{2(-1)^{\frac{n-3}{2}}}{bn\gamma_n\omega_n}\int_{\partial\B_{\rho}^n(z)}\left(\left(t^{-1}\partial_t\right)^{(n-3)/2}t^{-1}\right)(au+b\nd u)(y,\norm{x-y})\d{\sigma(y)},\]
	which shows the final result.
\end{proof}
\section{Numerical implementation and experiments}
\label{sec:numresults}
Following the theoretical results in sections \ref{sec:timeboundinveven} and \ref{sec:timeboundinvodd}, we now present numerical implementations in two dimensions of our new inversion formulas for wave measurements on finite time intervals and compare them with old formulas requiring unlimited time wave measurements. We will consider all three types of traces. For the sake of completeness, we briefly discuss how we discretized initial data and the corresponding wave measurements and elaborate how we numerically implemented our new inversion formulas. Throughout this section, we suppose that $f$ has compact support in some ball $\B_\rho^2(z)\subset\R^2$ with radius $\rho>0$ and center $z\in\R^2$.
\subsection{Discretization of simulated data and numerical implementation of new inversion formulas}
Let $N$ denote the number of samples in one direction, $\Delta x=\frac{2\rho}{N-1}$ the step size and $T\geq 2\rho$ the end time. Our goal is to determine discrete values of $f$ on the uniform grid $\set{-\rho+i\Delta x\mid i\in\set{0,\ldots,N-1}}^2$ from discrete wave data $\mathbf{m}_T[k,l]$ given on the points of the circle $$y_k\coloneqq z+\rho(\cos(\varphi_k),\sin(\varphi_k)),\quad k=0,\ldots,N_\varphi-1$$ with $N_\varphi\coloneqq\lceil2\rho\pi/\Delta x\rceil$, $\varphi_k\coloneqq k\Delta\varphi$ and $\Delta\varphi\coloneqq 2\rho\pi/N_\varphi$, and time points $$t_l=l\Delta t,\quad l=0,\ldots,N_t-1,$$ where $0<\Delta t<T$ and $N_t\coloneqq \lfloor T/\Delta t+1\rfloor$. The measurements $\mathbf{m}_T[k,l]$ correspond either to the discrete values of a weighted weighted Dirichlet trace or a weighted Neumann trace or a mixed trace
\begin{equation*}
	\mathbf{d}_T[k,l]\coloneqq a u(y_k,t_l),\quad \mathbf{n}_T[k,l]\coloneqq b\nd u(y_k,t_l),\quad \mathbf{mix}_T[k,l]\coloneqq a u(y_k,t_l)+b\nd u(y_k,t_l),
\end{equation*}
at $(y_k,t_l)$ for some weights $a,b\neq 0$. For the numerical simulation, we computed them by solving the wave equation with the fast Fourier transform (FFT) on the whole grid and using interpolation to obtain the corresponding values on the detector points $y_k$.

\subsubsection*{Implementation of formula for Neumann/mixed traces}
First, we give details on the numerical realization of formula \eqref{eq:invneumannbteven}. Formula \eqref{eq:invformmixedbteven} can be implemented analogously. Now, given the discrete values of the Neumann trace $\mathbf{n}_T[k,l]=\nd u(y_k,t_l)$, we approximate \eqref{eq:invneumannbteven} by
\begin{align*}
	f(x_{i,j})&=\frac{\rho}{\pi}\int_0^{2\pi}\int_0^T k_T(\norm{x_{i,j}-h(\varphi)},t)\nd u(h(\varphi),t)\d{t}\d{\varphi}\\
		&\approx \frac{\rho\Delta \varphi}{\pi}\sum_{k=0}^{N_\varphi-1}\int_0^T k_T(\norm{x_{i,j}-y_k},t)\nd u(y_k,t)\d{t},
\end{align*}
where $h\colon(0,2\pi)\to\R^2\colon \varphi\mapsto z+\rho(\cos(\varphi),\sin(\varphi))^T$ denotes a parametrization of $\partial\B_\rho^2(z)$ and $x_{i,j}=(-R+i\Delta x,-R+j\Delta x)^T$ a point on the grid for some $0\leq i,j\leq N-1$. In order to approximate the above inner integral, we first compute
\begin{align*}
	\int_0^T k_T(t_l,t)\nd u(y_k,t)\d{t}&\approx\sum_{m=1}^{N_t-1}\int_{t_{m-1}}^{t_m}k_T(t_l,t)\nd u(y_k,t)\d{t}\\
	&\approx\frac{2}{\pi}\sum_{m=1}^{N_t-1}\int_{t_{m-1}}^{t_m}\frac{t_m}{\sqrt{\abs{t_m^2-t^2}}}t_m^{-1}\tilde{k}_T(t_l,t_m)\nd u(y_k,t_m)\d{t}\\
	&=\frac{2}{\pi}\sum_{m=1}^{N_t-1}\abs{\sqrt{\abs{t_m^2-t_l^2}}-\sqrt{\abs{t_{m-1}^2-t_l^2}}}t_m^{-1}\tilde{k}_T(t_l,t_m)\mathbf{n}_T[k,m]
\end{align*}
for $0\leq l \leq N_t-2$. Defining $\AM_{T}[l,m]\coloneqq\abs{\sqrt{\abs{t_m^2-t_l^2}}-\sqrt{\abs{t_{m-1}^2-t_l^2}}}t_m^{-1}\tilde{k}_{T}(t_l,t_m)$ and $\AM_{T}[l,0]=0$ for $0\leq l\leq N_t-2$ and $1\leq m\leq N_t-1$, we finally use
\begin{equation*}
	\mathbf{F}_T(\mathbf{n}_T)[i,j]\coloneqq\frac{2\rho\Delta \varphi}{\pi^2}\sum_{k=0}^{N_\varphi-1}\mathrm{interpolate}\left(\left(\mathbf{n}_T*\AM_{T}'\right)[k,:],\norm{x_{i,j}-y_k}\right)
\end{equation*}
as a discretization of formula \eqref{eq:invneumannbteven}, where the inner term expresses the interpolated value of $\norm{x_{i,j}-y_k}$ respectively the array $(\mathbf{n}_T*\AM_{T}')[k,:]$ and time points $t_l$ for $l=0,\ldots,N_t-2$. Here, $\AM_{T}'$ denotes the transpose of the matrix $\AM_{T}$. Formula \eqref{eq:invneumanneven} for unbounded time intervals can be discretized in a similar way
\begin{equation*}
	\mathbf{F}_{\infty}(\mathbf{n}_T)[i,j]\coloneqq\frac{\rho\Delta \varphi}{\pi}\sum_{k=0}^{N_\varphi-1}\mathrm{interpolate}\left(\left(\mathbf{n}_T*\AM'\right)[k,:],\norm{x_{i,j}-y_k}\right)
\end{equation*}
by using the matrix $\AM\in\R^{N_t\times N_t}$ defined by \[\AM[i,j]\coloneqq \begin{cases}\sqrt{t_j^2-t_i^2}-\sqrt{t_{j-1}^2-t_i^2},\quad &i<j,\\0,\quad&\text{else,}\end{cases}\] and cutting off the inner integral in \eqref{eq:invneumanneven} from $T$ to infinity. As can be observed in the definition of $\mathbf{F}_T(\mathbf{n}_T)$  and  $\mathbf{F}_{\infty}(\mathbf{n}_T)$, beside the factor $2/\pi$ the main difference between these two formulas is the kernel function $\tilde{k}_T$, which appears in the definition of $\mathbf{F}_T$ in $\mathbf{A}_T$, whereas in the definition of $\mathbf{F}_\infty$ no kernel function in $\mathbf{A}$ is being used.

The corresponding discretized versions of formulas \eqref{eq:invformmixedeven} and \eqref{eq:invformmixedbteven} are denoted by $\mathbf{F}_{\infty}(\mathbf{mix}_T)$ and $\mathbf{F}_{T}(\mathbf{mix}_T)$.
\subsubsection*{Implementation of formula for Dirichlet traces}
From \eqref{eq:invdirichletbteven} we see that implementing the formula for Dirichlet data requires the numerical computation of the double integral for each component. Writing $\nu(y_k)=(\cos(\varphi_k),\sin(\varphi_k))^T$ for $0\leq k\leq N_\varphi-1$, the double integrals can be numerically evaluated to
\begin{equation*}
	\mathbf{G}_{T}(\mathbf{d}_T)^1[i,j]\coloneqq\sum_{k=0}^{N_\varphi-1}\cos(\varphi_k)\mathrm{interpolate}\left(\left(\mathbf{d}_T*\AM_{T}'\right)[k,:],\norm{x_{i,j}-y_k}\right)
\end{equation*}
\begin{equation*}
	\mathbf{G}_{T}(\mathbf{d}_T)^2[i,j]\coloneqq\sum_{k=0}^{N_\varphi-1}\sin(\varphi_k)\mathrm{interpolate}\left(\left(\mathbf{d}_T*\AM_{T}'\right)[k,:],\norm{x_{i,j}-y_k}\right),
\end{equation*}
resulting in the discretized formula
\begin{equation*}
	\mathbf{G}_{T}(\mathbf{d}_T)[i,j]\coloneqq\frac{2\rho\Delta\varphi}{\pi^2}\left(CD^1(\mathbf{G}_{T}(\mathbf{d}_T)^1)[i,j]+CD^2(\mathbf{G}_{T}(\mathbf{d}_T)^2)[i,j]\right),
\end{equation*}
where $CD^1$ and $CD^1$ denote the central differences in $x$ and $y$ direction, respectively. The disretization $\mathbf{G}_{\infty}(\mathbf{d}_T)$ of formula \eqref{eq:invdirichleteven} can be approximated similarly by cutting off the inner integral from $T$ to infinity, using $\AM$ instead of $\AM_{T}$ and changing the coefficient to $\rho\Delta\varphi/\pi$.
\subsection{A numerical experiment}
In the following, we perform our numerical computations on a $[-1,1]\times[-1,1]$ grid with $N\coloneqq 257$ coordinate points. As initial data of the wave equation we use the phantom presented in Figure \ref{fig:phantom}, which has been similarly created as the phantom in \cite[Figure 1]{DreHal20}, and denoted it by $\mathbf{F}\in\R^{257\times 257}$. Note that the phantom has compact support in the open unit ball and the entries of $\mathbf{F}$ lie in the interval $[0,1]$.
\begin{figure}
	\centering
	\includegraphics[scale=0.25]{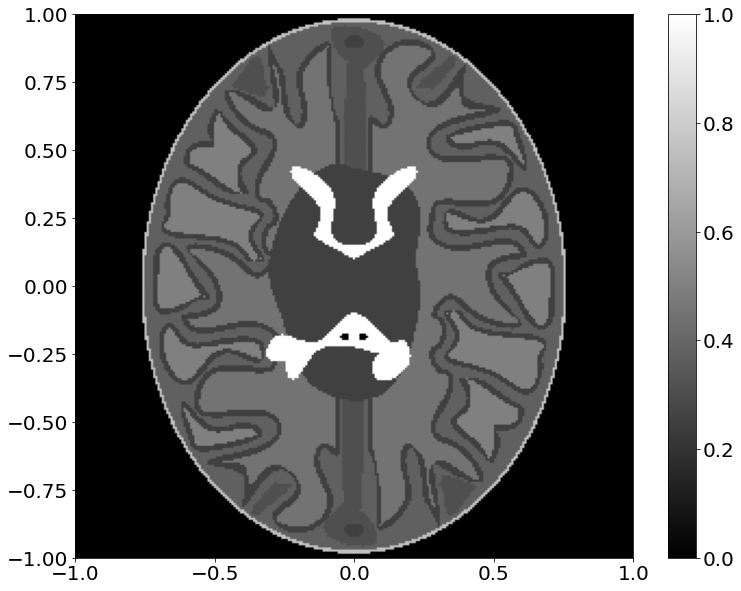}
	\caption{Initial data of our numerical experiment.}
	\label{fig:phantom}
\end{figure}
We therefore assume that $\Sp^1$ is the detection surface and $\Delta t=10^{-4}$ the time difference in which the simulated acoustic waves are measured. The end time is $T=2$ (=$\diam{\B^2}$), the shortest time for which our inversion formulas yield theoretically exact reconstruction. Figure \ref{fig:wavedata} shows the different wave measurements of the head phantom with and without Gaussian noise added to the data. Here, $20\%$ Gaussian noise added data means that normally distributed data with standard deviation $0.2$ of the maximum was added to the original data. The two weights are set to $a=1$ and $b=1/10$. Note that we have chosen the weights such that discrete values of the Neumann trace and the Dirichlet trace are approximately equally balanced in the sense that both parts significantly contribute to the frequency response of the mixed trace. Such a behavior has been actually observed in practical applications, for example for data of piezoelectric detectors often employed in PAT \cite{PalHarKovNus17}.
\begin{figure}
	\begin{minipage}{0.32\textwidth}
		\includegraphics[scale=0.2]{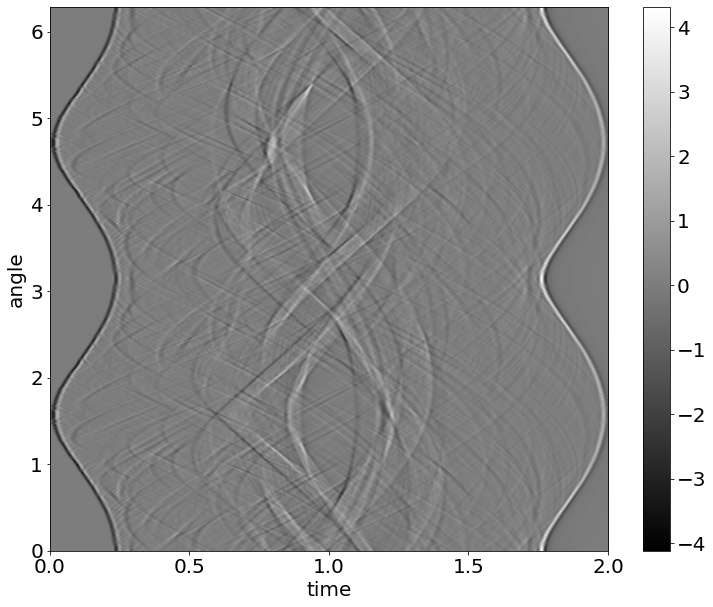}
	\end{minipage}
	\begin{minipage}{0.32\textwidth}
		\includegraphics[scale=0.2]{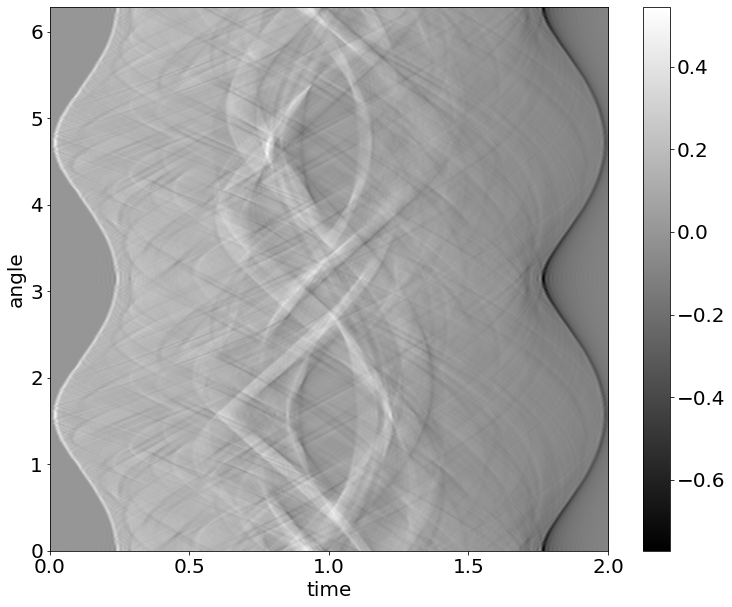}
	\end{minipage}
	\begin{minipage}{0.32\textwidth}
		\includegraphics[scale=0.2]{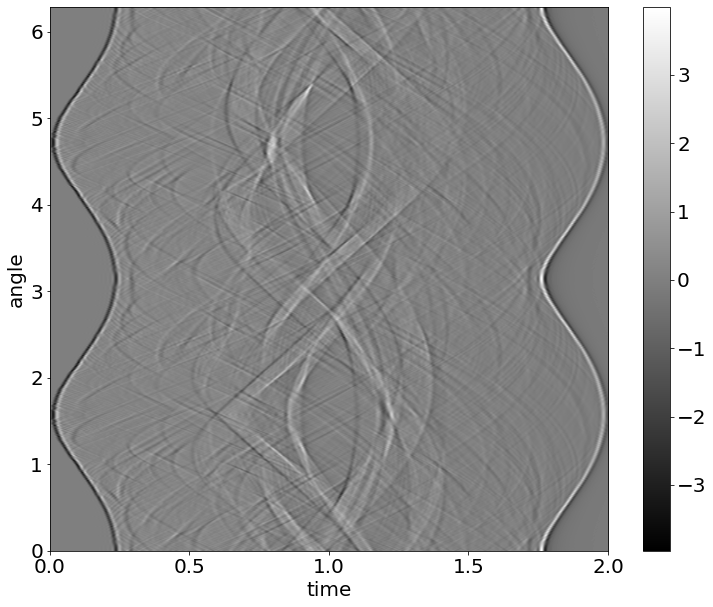}
	\end{minipage}\\
	\begin{minipage}{0.32\textwidth}
		\includegraphics[scale=0.2]{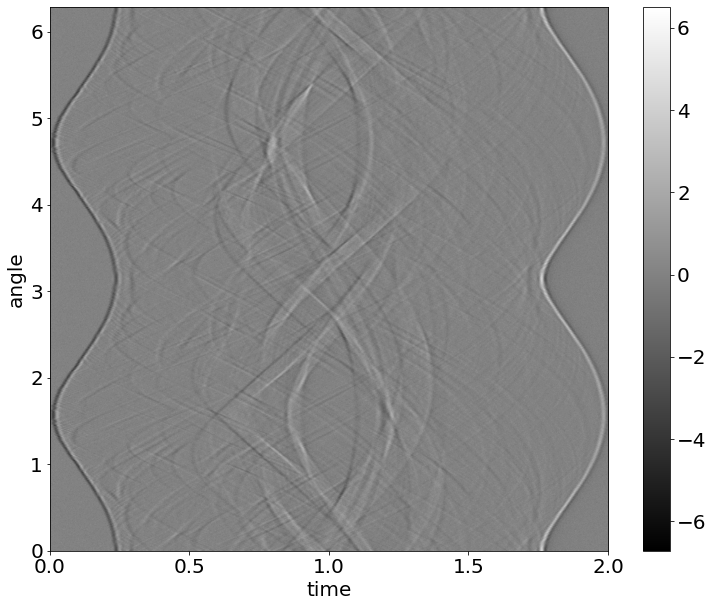}
	\end{minipage}
	\begin{minipage}{0.32\textwidth}
		\includegraphics[scale=0.2]{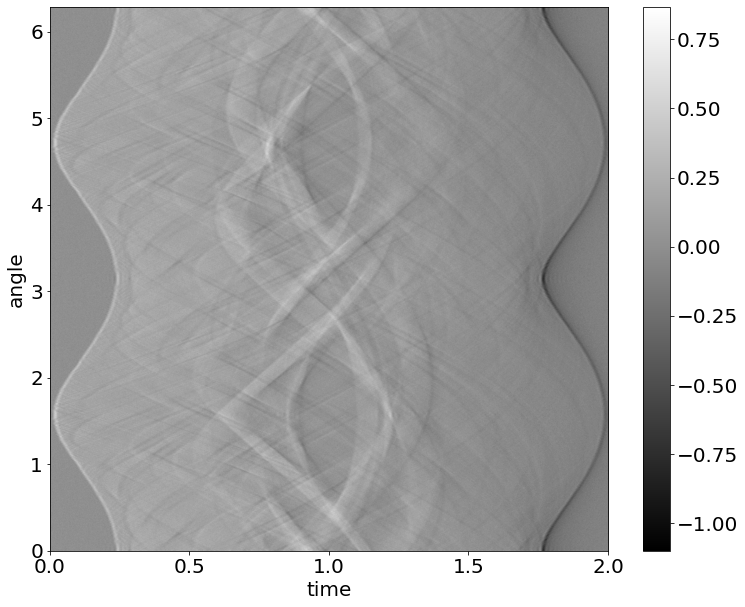}
	\end{minipage}
	\begin{minipage}{0.32\textwidth}
		\includegraphics[scale=0.2]{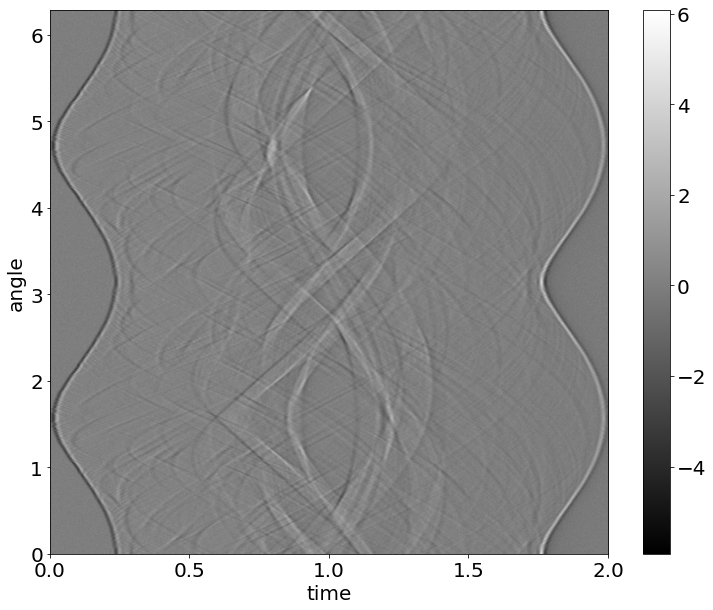}
	\end{minipage}
	\caption{Simulated wave data. Top (from left to right): Neumann data $\mathbf{n}_T$, Dirichlet data $\mathbf{d}_T$ and mixed data $\mathbf{mix}_T$. Below (from right to left): Neumann data $\mathbf{n}_T$, Dirichlet data $\mathbf{d}_T$ and mixed data $\mathbf{mix}_T$ with $20\%$ Gaussian noise.}
	\label{fig:wavedata}
\end{figure}

Figure \ref{fig:recexact} shows the numerical approximations of our previous filtered backprojection formulas in the left column ($\mathbf{F}_{\infty}(\mathbf{n}_T)$ and $\mathbf{G}_{\infty}(\mathbf{d}_T)$) and our new inversion formulas over finite time intervals for noisy-free Neumann and Dirichlet data in the right column ($\mathbf{F}_T(\mathbf{n}_T)$ and $\mathbf{G}_{T}(\mathbf{d}_T)$). Due to the cut off of the improper integrals in \eqref{eq:invneumanneven} and \eqref{eq:invdirichleteven} from $T$ to infinity, we can detect a slight error inside the detection surface which doesn't belong to the phantom itself. The gray value outside of the phantom in the left reconstruction from Neumann data has approximately the constant value $0.12$, whereas the right reconstruction with our new inversion formula hardly shows any error. A similar behavior can be observed in the reconstructions from Dirichlet traces: The gray value on the left image below has here an approximate value of $-0.05$ and the right image below has values in the range of thousands.
\begin{figure}[h!]
	\begin{minipage}{0.5\textwidth}
		\includegraphics[scale=0.25]{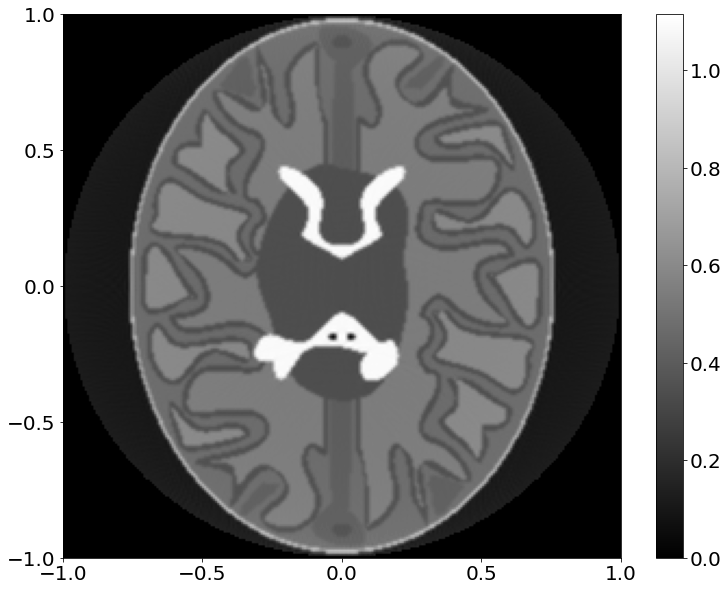}
	\end{minipage}
	\begin{minipage}{0.5\textwidth}
		\includegraphics[scale=0.25]{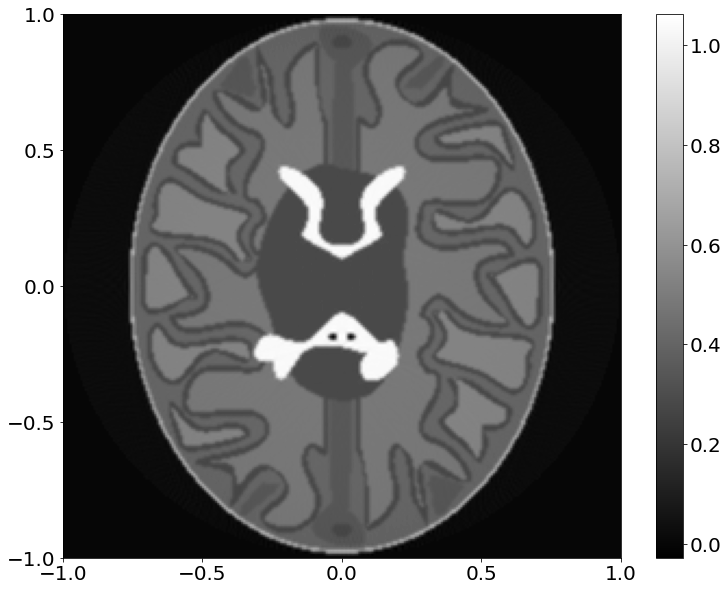}
	\end{minipage}
	\begin{minipage}{0.5\textwidth}
		\includegraphics[scale=0.25]{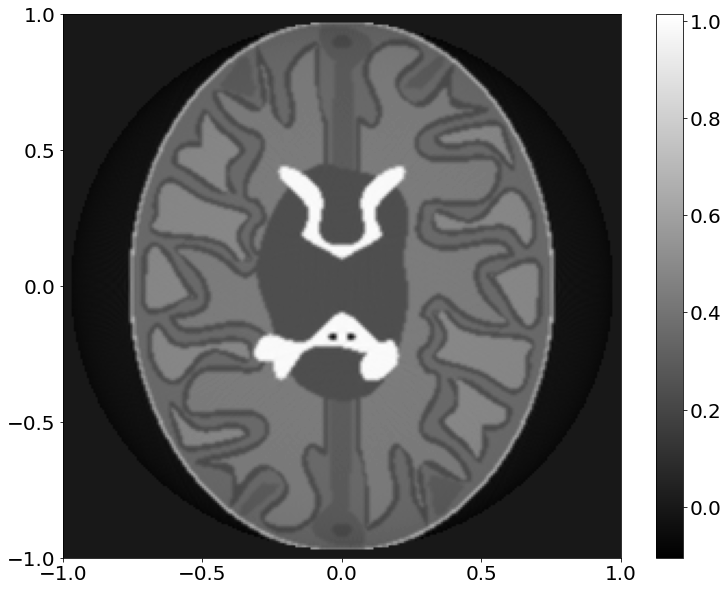}
	\end{minipage}
	\begin{minipage}{0.5\textwidth}
		\includegraphics[scale=0.25]{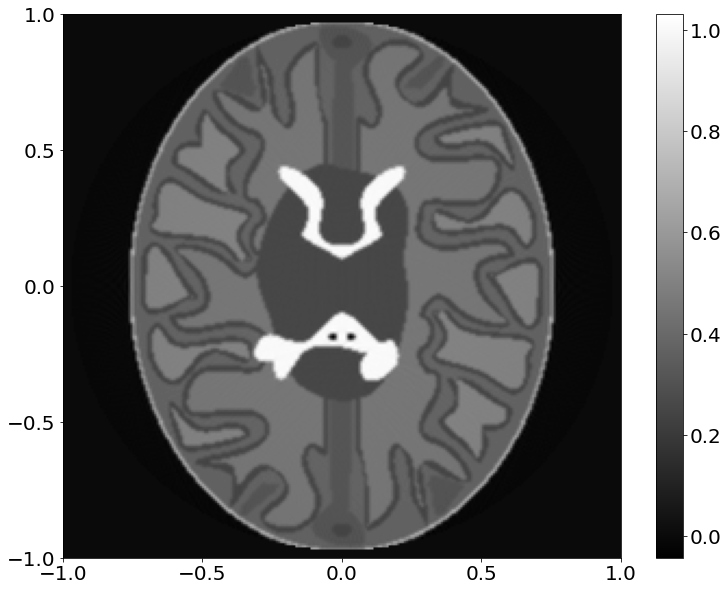}
	\end{minipage}
	\caption{Reconstructions with exact data. Top, left: $\mathbf{F}_{\infty}(\mathbf{n}_T)$. Top, right: $\mathbf{F}_T(\mathbf{n}_T)$. Below, left: $\mathbf{G}_{\infty}(\mathbf{d}_T)$. Below, right: $\mathbf{G}_{T}(\mathbf{d}_T)$.}
	\label{fig:recexact}
\end{figure}

In order to study the stability of our inversion formulas, we also applied the discretized formulas on noisy data. Figure \ref{fig:recnoisy20} shows the obtained reconstructions using $20\%$ Gaussian noise added data.
\begin{figure}[h!]
	\begin{minipage}{0.5\textwidth}
		\includegraphics[scale=0.25]{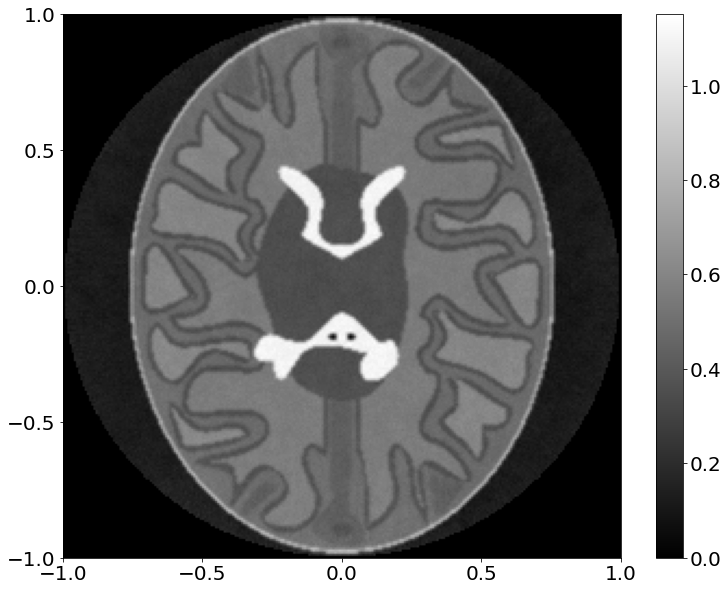}
	\end{minipage}
	\begin{minipage}{0.5\textwidth}
		\includegraphics[scale=0.25]{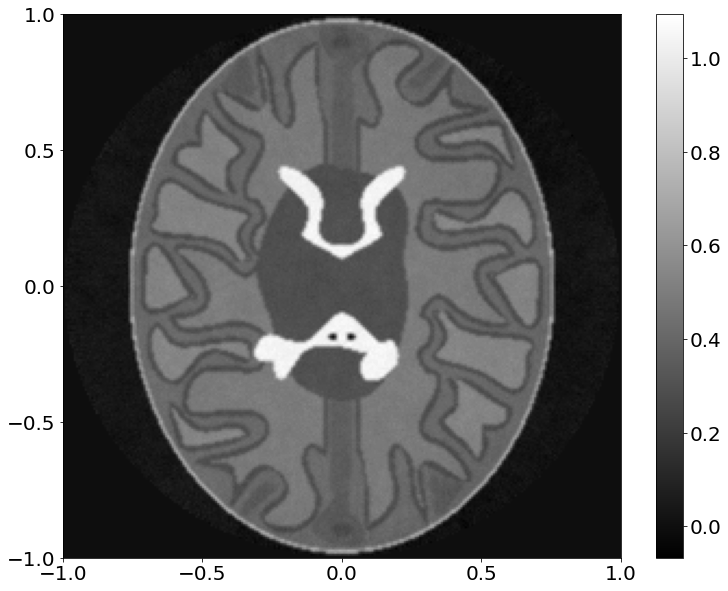}
	\end{minipage}
	\begin{minipage}{0.5\textwidth}
		\includegraphics[scale=0.25]{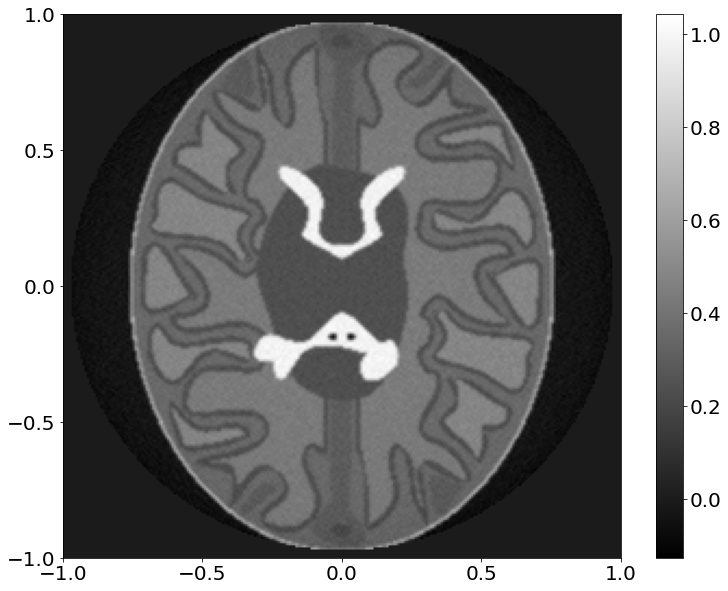}
	\end{minipage}
	\begin{minipage}{0.5\textwidth}
		\includegraphics[scale=0.25]{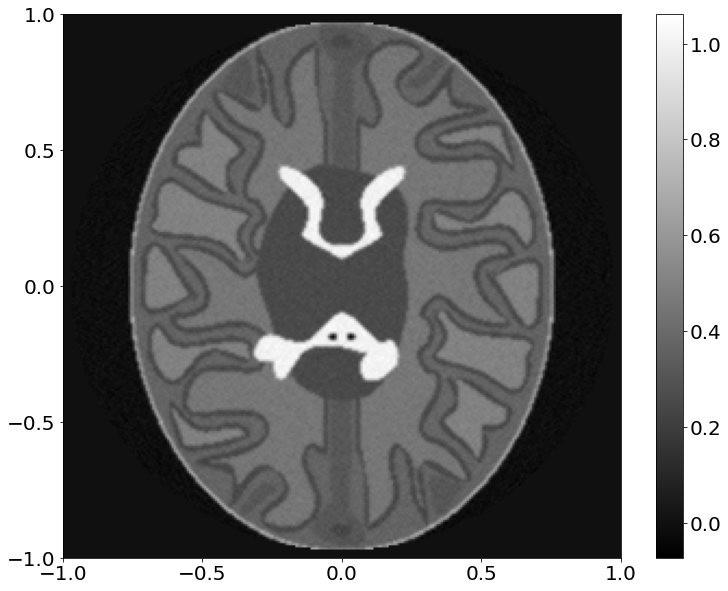}
	\end{minipage}
	\caption{Reconstructions with $20\%$ Gaussian noise added data. Top, left: $\mathbf{F}_{\infty}(\mathbf{n}_T)$. Top, right: $\mathbf{F}_T(\mathbf{n}_T)$. Below, left: $\mathbf{G}_{\infty}(\mathbf{d}_T)$. Below, right: $\mathbf{G}_{T}(\mathbf{d}_T)$.}
	\label{fig:recnoisy20}
\end{figure}
Despite the added noise, we conclude that the disturbed data didn't strongly affect the recovery of our head phantom. The artefacts outside of the phantom in the left reconstructions have become a bit stronger, whereas the right phantoms almost show no difference to the reconstructions with exact data.

In addition, we added more noise to the exact data and computed the numerical approximations of our inversion formulas with $40\%$ Gaussian noise. The approximated phantoms are shown in Figure \ref{fig:recnoisy40}.
\begin{figure}[h!]
	\begin{minipage}{0.5\textwidth}
		\includegraphics[scale=0.25]{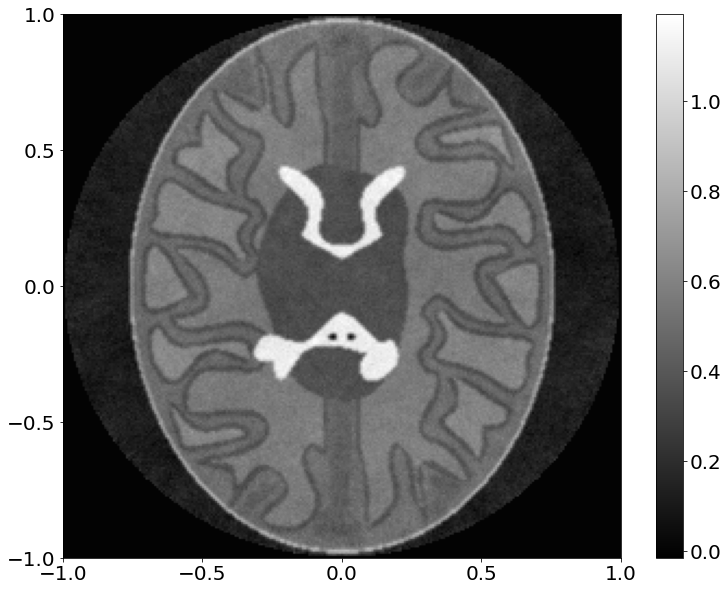}
	\end{minipage}
	\begin{minipage}{0.5\textwidth}
		\includegraphics[scale=0.25]{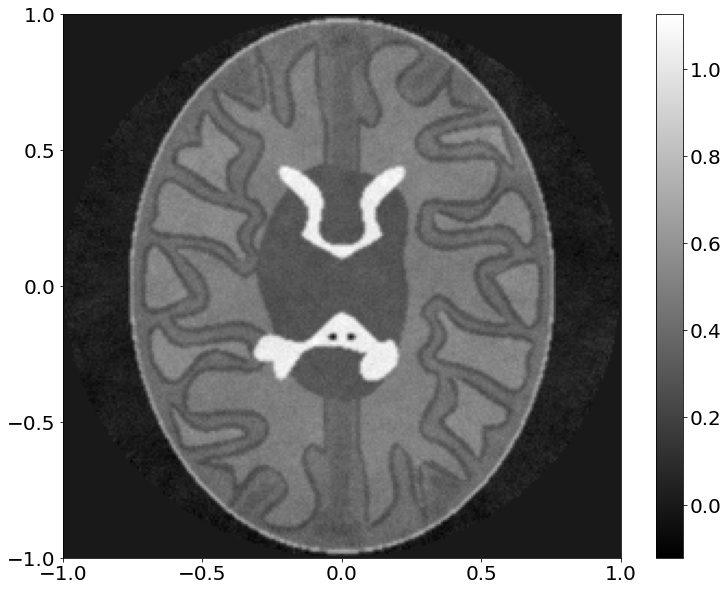}
	\end{minipage}
	\begin{minipage}{0.5\textwidth}
		\includegraphics[scale=0.25]{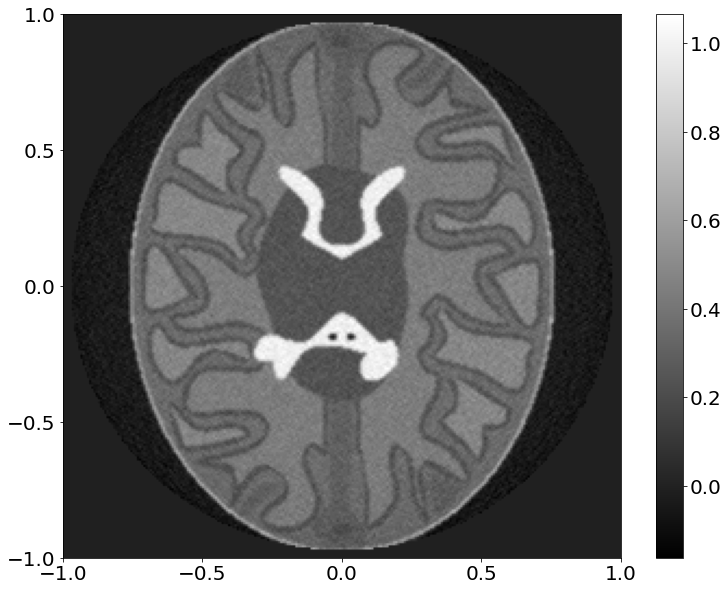}
	\end{minipage}
	\begin{minipage}{0.5\textwidth}
		\includegraphics[scale=0.25]{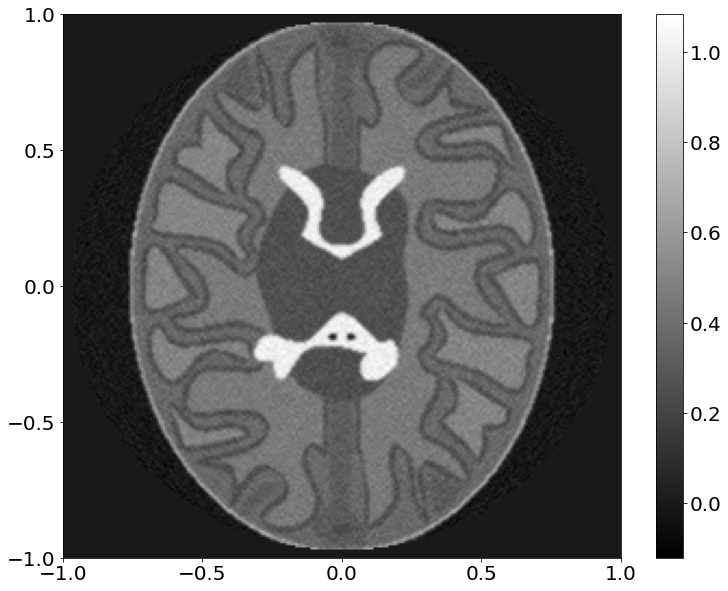}
	\end{minipage}
	\caption{Reconstructions with $40\%$ Gaussian noise added data. Top, left: $\mathbf{F}_{\infty}(\mathbf{n}_T)$. Top, right: $\mathbf{F}_T(\mathbf{n}_T)$. Below, left: $\mathbf{G}_{\infty}(\mathbf{d}_T)$. Below, right: $\mathbf{G}_{T}(\mathbf{d}_T)$.}
	\label{fig:recnoisy40}
\end{figure}
Because of the higher noise rate in the data, we now obtain discrete values within the interval $[-0.0157,0.4821]$ and $[-0.1225,0.373]$ outside of the phantom for $\mathbf{F}_{\infty}(\mathbf{n}_T)$ and $\mathbf{F}_T(\mathbf{n}_T)$, respectively. The approximated formulas for Dirichlet traces yield the ranges $[-0.1626,0.3276]$ in $\mathbf{G}_{\infty}(\mathbf{d}_T)$ and $[-0.1207,0.3487]$ in $\mathbf{G}_{T}(\mathbf{d}_T)$. Despite similar ranges, the discrete approximations of the exact formulas over finite time intervals show better results.

A clear difference can be detected between the two rows in Figure \ref{fig:rangecond} showing the numerical implementation of the two range conditions on the right-hand side in \eqref{eq:rangecondeven} and \eqref{eq:rangecondbteven}, which both should be theoretically to zero. The left reconstruction in the second row, which corresponds to $\mathbf{F}_{T}(\mathbf{d}_T)$ and equals the discretization of range condition \eqref{eq:rangecondbteven}, is nearly seven times lower than the error in the left reconstruction in the first row, which is the numerical implementation of range condition \eqref{eq:rangecondeven} for unbounded time intervals and is computed by $\mathbf{F}_{\infty}(\mathbf{d}_T)$.
The middle and right images in Figure \ref{fig:rangecond} illustrate the same numerical reconstructions as in the left column with $20\%$ and $40\%$ noisy data.
\begin{figure}
	\begin{minipage}{0.325\textwidth}
		\includegraphics[scale=0.1975]{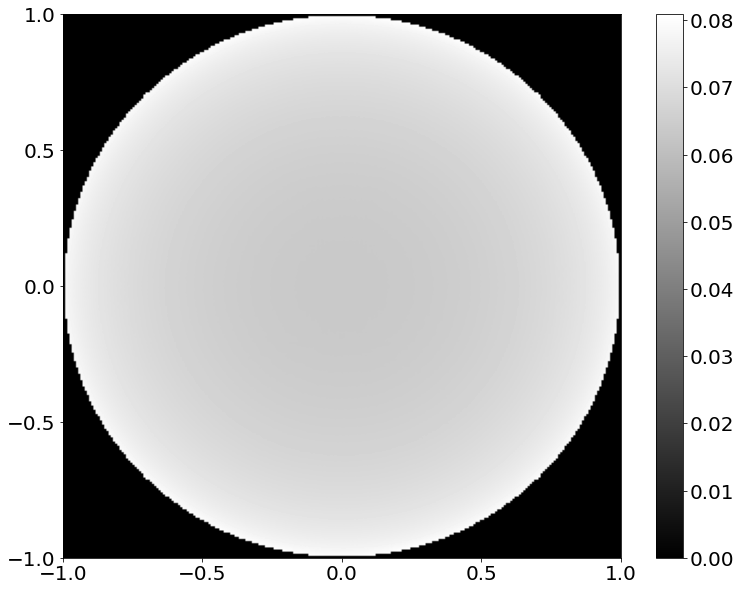}
	\end{minipage}
	\begin{minipage}{0.325\textwidth}
		\includegraphics[scale=0.1975]{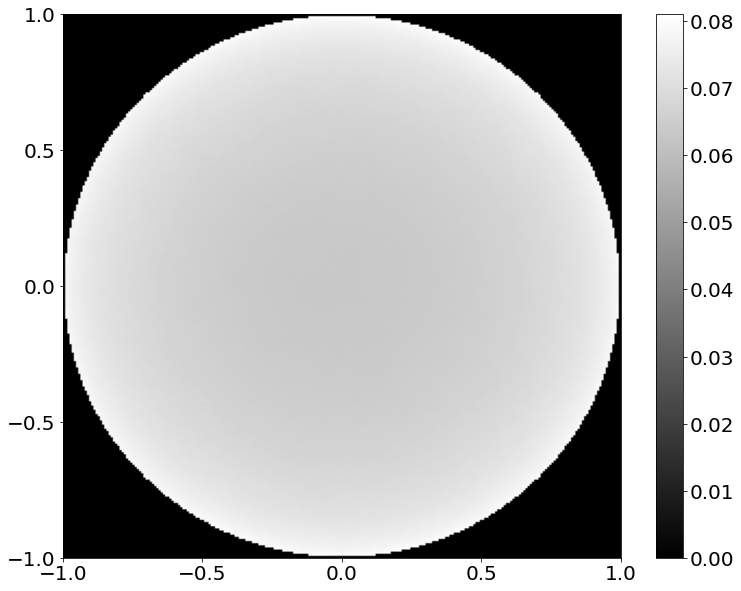}
	\end{minipage}
	\begin{minipage}{0.325\textwidth}
		\includegraphics[scale=0.1975]{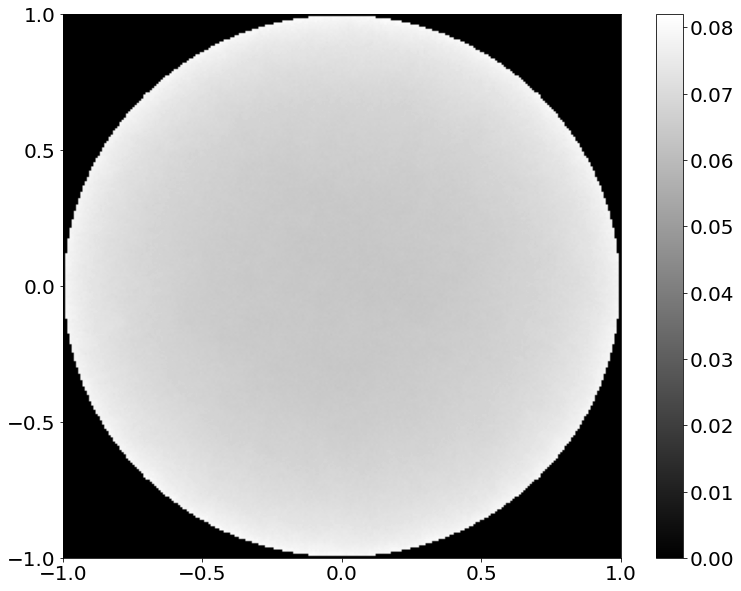}
	\end{minipage}\\
	\begin{minipage}{0.325\textwidth}
		\includegraphics[scale=0.1975]{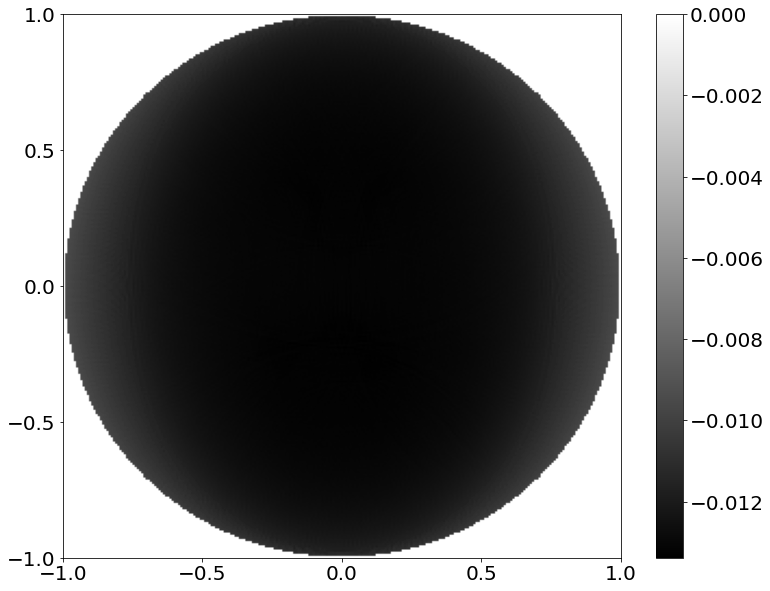}
	\end{minipage}
	\begin{minipage}{0.325\textwidth}
		\includegraphics[scale=0.1975]{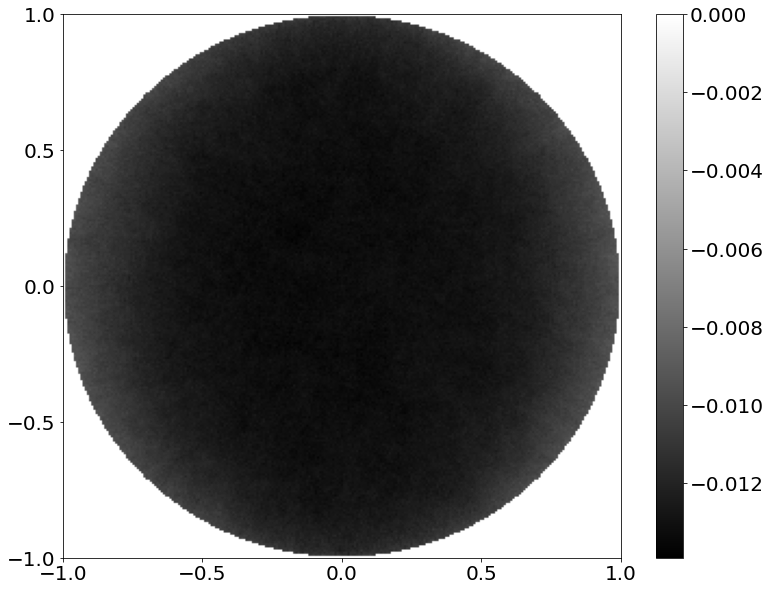}
	\end{minipage}
	\begin{minipage}{0.325\textwidth}
		\includegraphics[scale=0.1975]{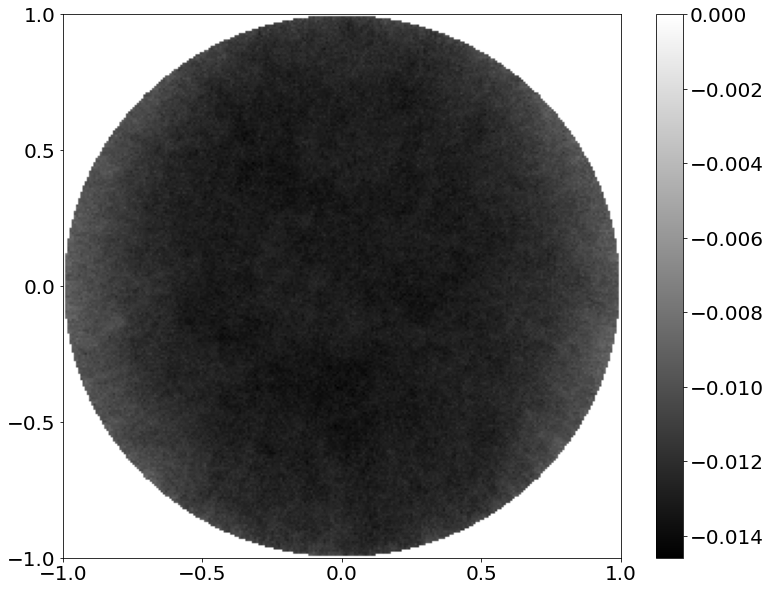}
	\end{minipage}
	\caption{Illustration of the range conditions \eqref{eq:rangecondeven} and \eqref{eq:rangecondbteven}, which should be theoretically equal to zero. Top (from left to right): $\mathbf{F}_{\infty}(\mathbf{d}_T)$ with exact data, $20\%$ Gaussian noise and $40\%$ Gaussian noise added data. Below (from left to right): $\mathbf{F}_T(\mathbf{d}_T)$ with exact, $20\%$ Gaussian noise and $40\%$ Gaussian noise added data.}
	\label{fig:rangecond}
\end{figure}

Lastly, we present the numerical reconstructions of the head phantom obtained by the new formulas \eqref{eq:invformmixedeven} and \eqref{eq:invformmixedbteven} for mixed data. As demonstrated in Figure \ref{fig:recmix}, the difference between the first row, which shows the reconstructions $\mathbf{F}_{\infty}(\mathbf{mix}_T)$ with our previous inversion formula for unbounded time intervals, and the second row, which illustrates the discretizations of $\mathbf{F}_{T}(\mathbf{mix}_T)$ with our new inversion formula for finite time intervals, is much more obvious than the reconstructions $\mathbf{F}_{\infty}(\mathbf{n}_T)$ and $\mathbf{F}_T(\mathbf{n}_T)$ in Figures \ref{fig:recexact}, \ref{fig:recnoisy20} and \ref{fig:recnoisy40}. This can explained as follows: The discrete formulas for mixed data $\mathbf{F}_{\infty}(\mathbf{mix}_T)$ and  $\mathbf{F}_T(\mathbf{mix}_T)$ can be expressed as the sum of the two discrete formulas $\mathbf{F}_{\infty}(\mathbf{d}_T)$ and $\mathbf{F}_{\infty}(\mathbf{n}_T)$, and $\mathbf{F}_{T}(\mathbf{d}_T)$ and $\mathbf{F}_{T}(\mathbf{n}_T)$, respectively. Because of the additional term $\mathbf{F}_{T}(\mathbf{d}_T)$, which is nearly seven times smaller than $\mathbf{F}_{\infty}(\mathbf{d}_T)$ as we already have seen in Figure \ref{fig:rangecond}, the error gets beside the use of $\mathbf{F}_{T}(\mathbf{n}_T)$ additionally smaller. This observation also holds for disturbed data (see Figure \ref{fig:rangecond}, middle, right).
\begin{figure}
	\begin{minipage}{0.325\textwidth}
		\includegraphics[scale=0.2]{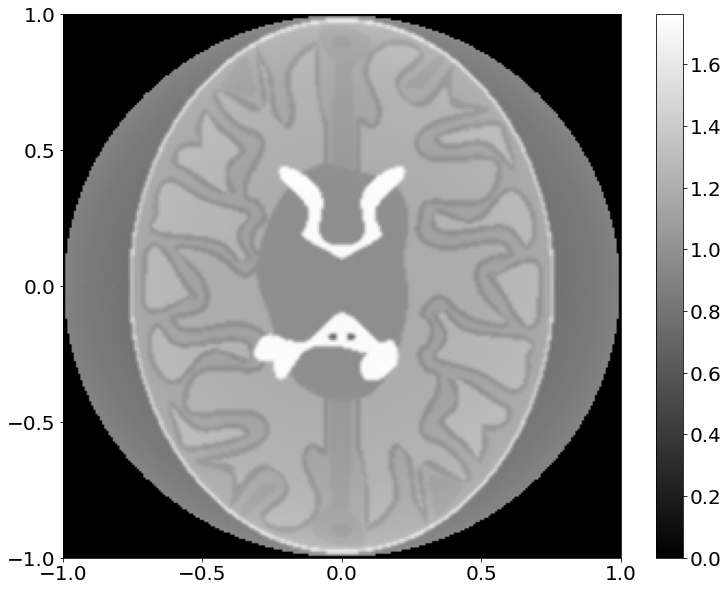}
	\end{minipage}
	\begin{minipage}{0.325\textwidth}
		\includegraphics[scale=0.2]{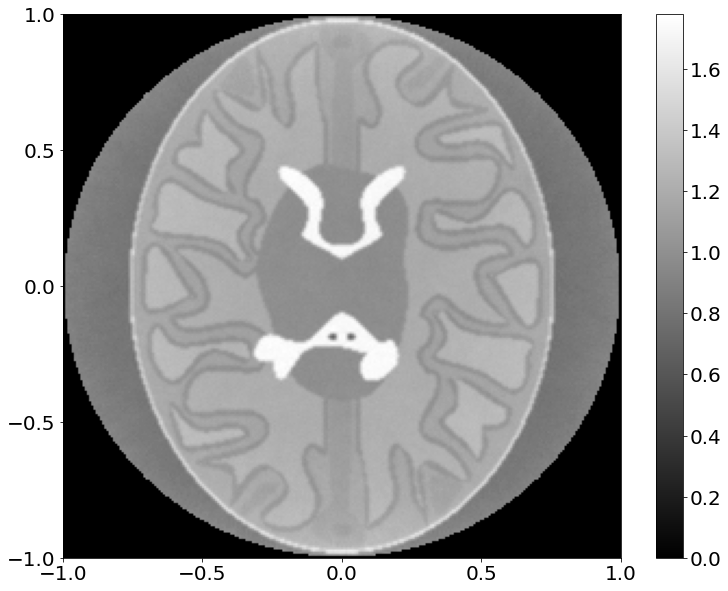}
	\end{minipage}
	\begin{minipage}{0.325\textwidth}
		\includegraphics[scale=0.2]{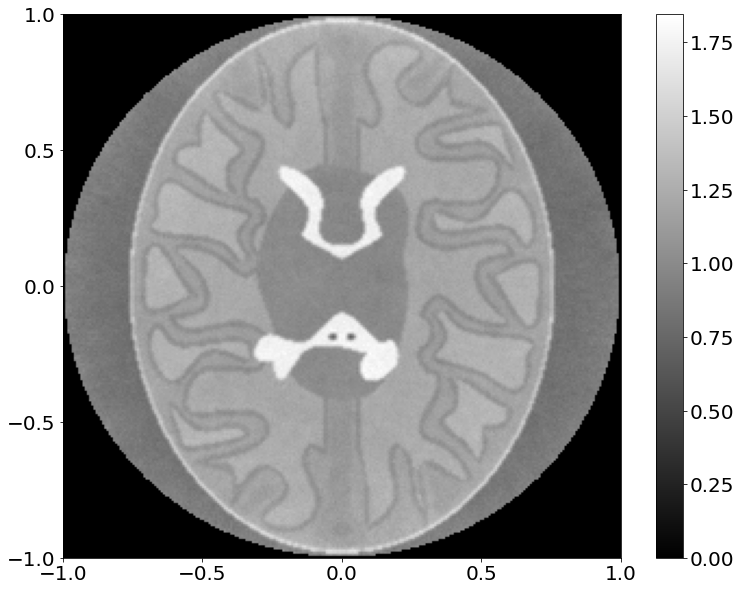}
	\end{minipage}\\
	\begin{minipage}{0.325\textwidth}
		\includegraphics[scale=0.2]{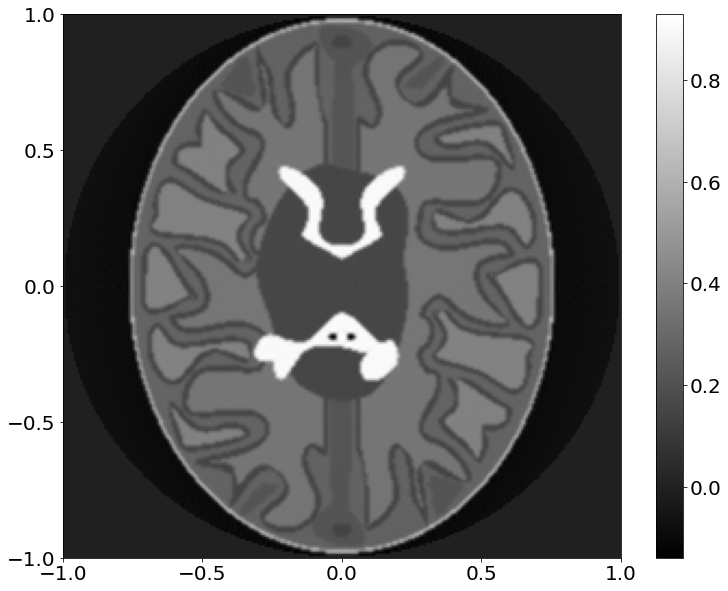}
	\end{minipage}
	\begin{minipage}{0.325\textwidth}
		\includegraphics[scale=0.2]{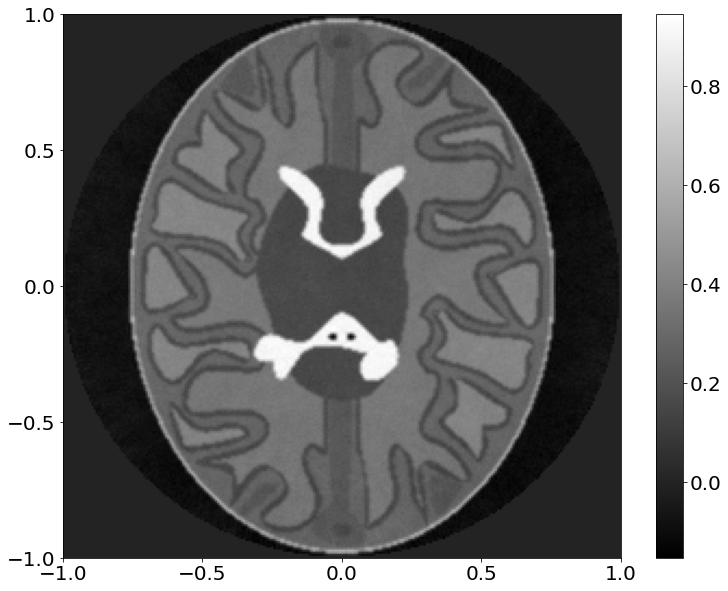}
	\end{minipage}
	\begin{minipage}{0.325\textwidth}
		\includegraphics[scale=0.2]{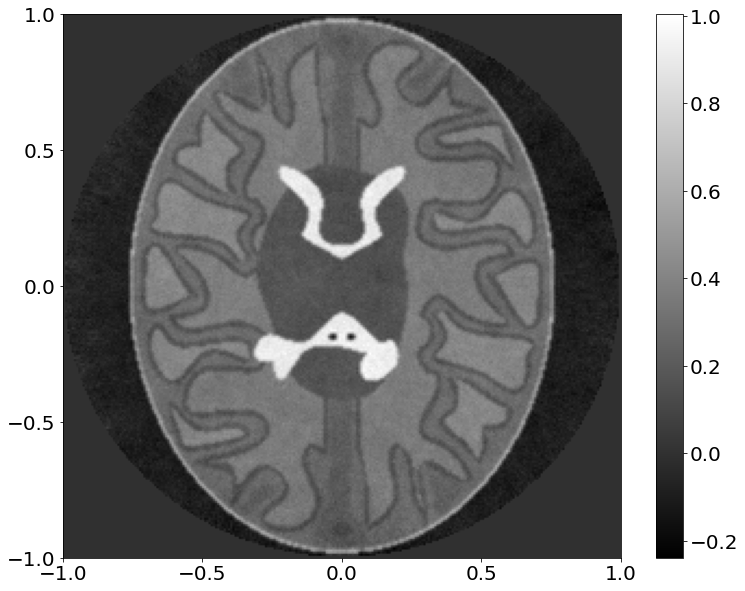}
	\end{minipage}
	\caption{Reconstructions with mixed data. Top (from left to right): $\mathbf{F}_{\infty}(\mathbf{mix}_T)$ with exact data, $20\%$ Gaussian noise and $40\%$ Gaussian noise added data. Below (from left to right): $\mathbf{F}_{T}(\mathbf{mix}_T)$ with exact, $20\%$ Gaussian noise and $40\%$ Gaussian noise added data.}
	\label{fig:recmix}
\end{figure}

The corresponding discrete $L^2$-errors of all reconstructions have also been evaluated and are summarized in Table \ref{tab:l2errT2}. The discrete $L^2$-error of $\mathbf{F}_{\infty}(\mathbf{n}_T)$, for example, is defined as \[\left(\sum_{(i,j)\in\Omega_{\Delta x}}\abs{\mathbf{F}_{\infty}(\mathbf{n}_T)[i,j]-\mathbf{F}[i,j]}^2{\Delta x}^2\right)^{1/2},\] where $\Omega_{\Delta x}\coloneqq\set{(i,j)\in\set{0,\ldots,N-1}^2\mid x_{i,j}\in\B^2}$. The errors for the other reconstructions are defined in an analogous way.
\begin{table}
	\center
	{\fontsize{10}{11}
	\begin{tabular}{l || c c | c c | c c | c c}
		$\text{noise}$ & $\mathbf{F}_{\infty}(\mathbf{n}_T)$ & $\mathbf{F}_{T}(\mathbf{n}_T)$ & $\mathbf{G}_{\infty}(\mathbf{d}_T)$ & $\mathbf{G}_{T}(\mathbf{d}_T)$ & $\mathbf{F}_{\infty}(\mathbf{d}_T)$ & $\mathbf{F}_{T}(\mathbf{d}_T)$ & $\mathbf{F}_{\infty}(\mathbf{mix}_T)$ & $\mathbf{F}_{T}(\mathbf{mix}_T)$\\\hline
		$\text{exact}$ & $0.19439$ & $0.09607$ & $0.09622$ & $0.08303$ & $0.11905$ & $0.02143$ & $1.36884$ & $0.18521$\\
$20\%$ & $0.20181$ & $0.09962$ & $0.09927$ & $0.08664$ & $0.11915$ & $0.02139$ & $1.35957$ & $0.19395$\\
$40\%$ & $0.22693$ & $0.11669$ & $0.10759$ & $0.09607$ & $0.11914$ & $0.02141$ & $1.39782$ & $0.18419$
	\end{tabular}}
	\caption{Discrete $L^2$-error of all reconstructions for $T=2$.}
	\label{tab:l2errT2}
\end{table}
\subsection{Varying the end time $T$}
In the last section of the article, we conclude our numerical results by calculating the single reconstructions for various end times $T$ and compute their discrete $L^2$-error. In Figure \ref{fig:l2error}, the corresponding error plots of the single numerical reconstructions are shown. Note that the diagrams show the error of the reconstructions with exact data, $20\%$ Gaussian noise and $40\%$ Gaussian noise added data. Moreover, due to the higher computation time for the simulation of the wave data, we selected a bigger time difference of $\Delta t=10^{-3}$.\\
\begin{figure}[h!]
\begin{minipage}{0.5\textwidth}
\begin{center}
	\begin{tikzpicture}
		\def\sc{1.25}
		\begin{axis}[xlabel=end time $T$,ylabel=$L^2$-error,width=6.32cm,height=5.175cm]
			\addplot[thick,lightgray,mark=*,mark size=1.4pt] table[x index=0, y index=1, row sep=\\]{
				2 0.2011418578081984 \\ 4 0.08540689209683536 \\ 6 0.08554012974914825 \\ 8 0.08564860436516977 \\
			};
			\addplot[thick,lightgray,dashed,mark=square*,mark size=1.4pt] table[x index=0, y index=1, row sep=\\]{
				2 0.09720742292937443 \\ 4 0.08547232363176221 \\ 6 0.08558249276787355 \\ 8 0.08567743726762787 \\
			};
			\addplot[thick,gray,mark=*,mark size=1.4pt] table[x index=0, y index=1, row sep=\\]{
				2 0.19255901770436798 \\ 4 0.10915027817947273 \\ 6 0.11688311576594197 \\ 8 0.11369491665950579 \\
			};
			\addplot[thick,gray,dashed,mark=square*,mark size=1.4pt] table[x index=0, y index=1, row sep=\\]{
				2 0.11582724416159765 \\ 4 0.10881648788650267 \\ 6 0.11449395760174262 \\ 8 0.11351529216973727 \\
			};
			\addplot[thick,black,mark=*,mark size=1.4pt] table[x index=0, y index=1, row sep=\\]{
				2 0.2733802070617781 \\ 4 0.155277277064195 \\ 6 0.19109212284476787 \\ 8 0.167856389044726 \\
			};
			\addplot[thick,black,dashed,mark=square*,mark size=1.4pt] table[x index=0, y index=1, row sep=\\]{
				2 0.18127665035397258 \\ 4 0.15394820829157463 \\ 6 0.17687026269163977 \\ 8 0.16555744131315422 \\
			};
		\end{axis}
	\end{tikzpicture}
\end{center}
\end{minipage}
\begin{minipage}{0.5\textwidth}
\begin{center}
	\begin{tikzpicture}
		\begin{axis}[xlabel=end time $T$,ylabel=$L^2$-error,width=6.32cm,height=5.175cm]
			\addplot[thick,lightgray,mark=*,mark size=1.4pt] table[x index=0, y index=1, row sep=\\]{
				2 1.408766344909571 \\ 4 0.2018103784134135 \\ 6 0.11359520206329808 \\ 8 0.09473030035252065 \\
			};
			\addplot[thick,lightgray,dashed,mark=square*,mark size=1.4pt] table[x index=0, y index=1, row sep=\\]{
				2 0.19424664354315072 \\ 4 0.1252386514855261 \\ 6 0.09740194486734649 \\ 8 0.09077604714276982 \\
			};
			\addplot[thick,gray,mark=*,mark size=1.4pt] table[x index=0, y index=1, row sep=\\]{
				2 1.4304682024932973 \\ 4 0.22578852381933365 \\ 6 0.12782611408030134 \\ 8 0.10917611279137554 \\
			};
			\addplot[thick,gray,dashed,mark=square*,mark size=1.4pt] table[x index=0, y index=1, row sep=\\]{
				2 0.20100867042583817 \\ 4 0.1309960634302043 \\ 6 0.12168548058388709 \\ 8 0.13327688659828074 \\
			};
			\addplot[thick,black,mark=*,mark size=1.4pt] table[x index=0, y index=1, row sep=\\]{
				2 1.3074953891040602 \\ 4 0.1698332312594166 \\ 6 0.17897002985051716 \\ 8 0.16854126201393665 \\
			};
			\addplot[thick,black,dashed,mark=square*,mark size=1.4pt] table[x index=0, y index=1, row sep=\\]{
				2 0.27402675527686016 \\ 4 0.23233696010617783 \\ 6 0.15542273422551692 \\ 8 0.1892838523718022 \\
			};
		\end{axis}
	\end{tikzpicture}
\end{center}
\end{minipage}\vspace{0.25cm}\\
\begin{minipage}{0.5\textwidth}
\begin{center}
	\begin{tikzpicture}
		\begin{axis}[xlabel=end time $T$,ylabel=$L^2$-error,width=6.32cm,height=5.175cm]
			\addplot[thick,lightgray,mark=*,mark size=1.4pt] table[x index=0, y index=1, row sep=\\]{
				2 0.11417818506889632 \\ 4 0.09843729484915373 \\ 6 0.09719241111756874 \\ 8 0.09598104400580848 \\
			};
			\addplot[thick,lightgray,dashed,mark=square*,mark size=1.4pt] table[x index=0, y index=1, row sep=\\]{
				2 0.10315287181115776 \\ 4 0.09851787541544679 \\ 6 0.09723266259177932 \\ 8 0.09600797369527098 \\
			};
			\addplot[thick,gray,mark=*,mark size=1.4pt] table[x index=0, y index=1, row sep=\\]{
				2 0.1301637227135487 \\ 4 0.11369081016655977 \\ 6 0.11285383304831437 \\ 8 0.11161371564102547 \\
			};
			\addplot[thick,gray,dashed,mark=square*,mark size=1.4pt] table[x index=0, y index=1, row sep=\\]{
				2 0.1201789322913684 \\ 4 0.11368991160708838 \\ 6 0.11284803774964125 \\ 8 0.11160564617209791 \\
			};
			\addplot[thick,black,mark=*,mark size=1.4pt] table[x index=0, y index=1, row sep=\\]{
				2 0.1680521071246963 \\ 4 0.15050907246083386 \\ 6 0.15088450525479208 \\ 8 0.1477457120137633 \\
			};
			\addplot[thick,black,dashed,mark=square*,mark size=1.4pt] table[x index=0, y index=1, row sep=\\]{
				2 0.16014292561822316 \\ 4 0.15036831793239117 \\ 6 0.15078331786361987 \\ 8 0.14767031174383463 \\
			};
		\end{axis}
	\end{tikzpicture}
\end{center}
\end{minipage}
\begin{minipage}{0.5\textwidth}
\begin{center}
	\begin{tikzpicture}
		\begin{axis}[xlabel=end time $T$,ylabel=$L^2$-error,width=6.32cm,height=5.175cm,yticklabels={$0$,$0.05$,$0.1$},ytick={0,0.05,0.1}]
			\addplot[thick,lightgray,mark=*,mark size=1.4pt] table[x index=0, y index=1, row sep=\\]{
				2 0.12242445946888533 \\ 4 0.018033230514501567 \\ 6 0.007339277795984941 \\ 8 0.0038671257301388425 \\
			};
			\addplot[thick,lightgray,dashed,mark=square*,mark size=1.4pt] table[x index=0, y index=1, row sep=\\]{
				2 0.02194985509899863 \\ 4 0.008842951066622739 \\ 6 0.00440852538320653 \\ 8 0.002733170581949765 \\
			};
			\addplot[thick,gray,mark=*,mark size=1.4pt] table[x index=0, y index=1, row sep=\\]{
				2 0.12245618593346226 \\ 4 0.018288540548960345 \\ 6 0.008078156078847407 \\ 8 0.003748412766508454 \\
			};
			\addplot[thick,gray,dashed,mark=square*,mark size=1.4pt] table[x index=0, y index=1, row sep=\\]{
				2 0.02178182835156803 \\ 4 0.008729068554548055 \\ 6 0.0040221111396436 \\ 8 0.003066094744531736 \\
			};
			\addplot[thick,black,mark=*,mark size=1.4pt] table[x index=0, y index=1, row sep=\\]{
				2 0.12138934412581316 \\ 4 0.017873871014754208 \\ 6 0.00888340265428734 \\ 8 0.003752647897704387 \\
			};
			\addplot[thick,black,dashed,mark=square*,mark size=1.4pt] table[x index=0, y index=1, row sep=\\]{
				2 0.02253215346310185 \\ 4 0.009140644179453783 \\ 6 0.0036978326050199147 \\ 8 0.0036852047966249865 \\
			};
		\end{axis}
	\end{tikzpicture}
\end{center}
\end{minipage}
\caption{$L^2$-Error of the single reconstructions with  exact data (light gray lines), $20\%$ Gaussian noise (gray lines) and $40\%$ Gaussian noise (black lines) for $T=2,4,6,8$: Top, left: $\mathbf{F}_{\infty}(\mathbf{n}_T)$ (solid) and $\mathbf{F}_T(\mathbf{n}_T)$ (dashed). Top, right: $\mathbf{F}_{\infty}(\mathbf{mix}_T)$ (solid) and $\mathbf{F}_{T}(\mathbf{mix}_T)$ (dashed). Below, left: $\mathbf{G}_{\infty}(\mathbf{d}_T)$ (solid) and $\mathbf{G}_{T}(\mathbf{d}_T)$ (dashed). Below, right: $\mathbf{F}_{\infty}(\mathbf{d}_T)$ (solid) and $\mathbf{F}_T(\mathbf{d}_T)$ (dashed).}
\label{fig:l2error} 
\end{figure}
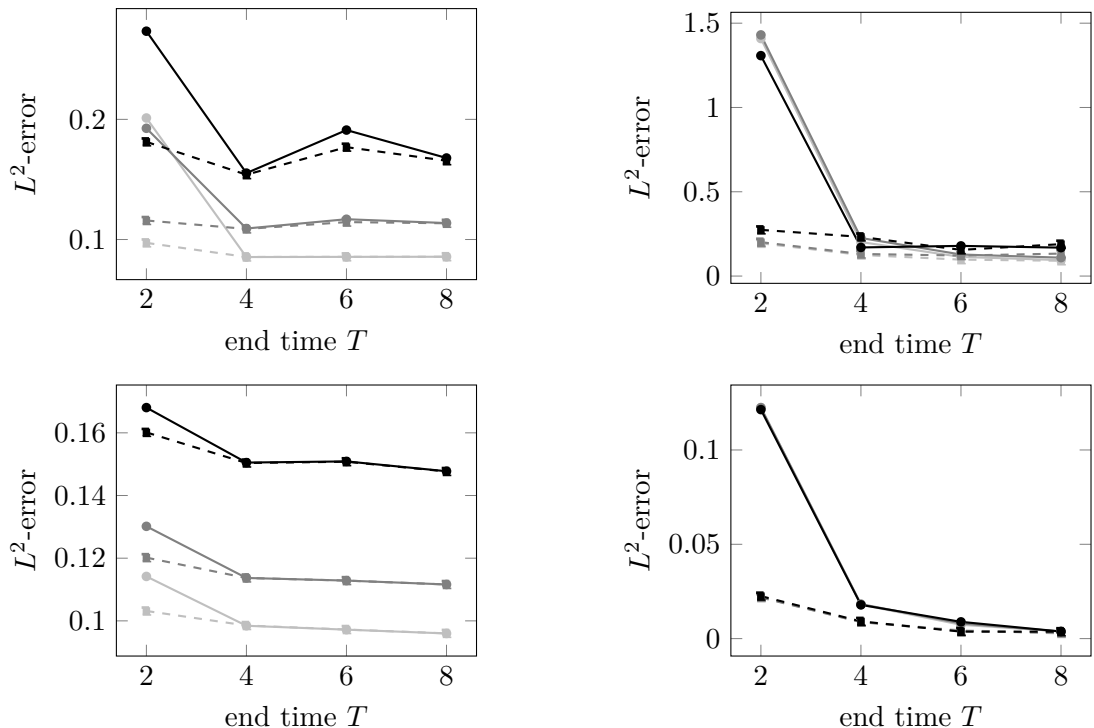
We can clearly see that biggest difference between the reconstructions are for the shortest end time $T=2$. With very few exceptions, the four discretized versions of the inversion formulas using finite time intervals have a smaller $L^2$-error for all end times as the ones using infinite time measurements. Moreover, we can see that the error values are tending to decrease as $T$ is getting larger. We also observe that the larger the end time $T$ is, the smaller the difference between the solid and dashed lines gets. Nevertheless, since in practice the acoustic waves are measured as shortly as possible due to external influences, the use of the new formulas offers a clear advantage for recovering the initial data of the wave equation.
\section{Conclusion}
\label{sec:conclusion}
In this article, we presented new reconstruction formulas for the inverse source problem in PAT requiring only measurements on finite time intervals.
To the best of our knowledge, these are the first reconstruction formulas which utilize photoacoustic measurements in the form of Dirichlet, Neumann and mixed traces on finite time intervals $(0,T)$. The formulas for the Dirichlet and Neumann data yield exact reconstruction for elliptical domains and the formula for mixed data exact reconstruction for spheres in $\R^n$. Similar to existing explicit wave inversion formulas in \cite{FinHalRak07,FinPatRak04,Kun07,Ngu09,Hal14} using data on $(0,\infty)$, they are of the filtered backprojection type and serve as basis of efficient and stable reconstruction algorithms. However, the differential operator $(\partial_t t^{-1})^{(n-2)/2}$ for even dimensions and $(\partial_t t^{-1})^{(n-3)/2}$ for odd dimensions inside integrals leads to an amplification of noise, which can be reduced by regularization similar to \cite{Lou96}.

In the even-dimensional case, we found a kernel function $k_T: (0,T)^2\to\R$ that can be used to recover the absorption coefficient independently of the spatial dimension $n$ from all three traces. We also observe that the formulas \eqref{eq:invneumanneven}, \eqref{eq:invdirichleteven} and \eqref{eq:invformmixedeven} for unbounded time intervals can also be written in the form of a kernel function defined by
\begin{equation*}
	k_{\infty}\colon (0,\infty)^2\to\R\colon (r_1,r_2)\mapsto\begin{cases}
		\frac{1}{\sqrt{r_2^2-r_1^2}},\quad &r_2>r_1,\\
		0,\quad &\text{else}.
	\end{cases}
\end{equation*}
Then, it is not hard to see that $(k_T)_{T>0}$ converges pointwise by a zero extension on $(0,\infty)^2$ to $k_{\infty}$ for $T\to\infty$. Note that the formulas for mixed traces \eqref{eq:invformmixedeven}, \eqref{eq:invformmixedbteven} and \eqref{eq:invformmixedbtodd}, except the weight $b$ of the Neumann trace, which appears as a factor in front of integral, are independent of the weighting factors $a,b$. This follows from the range conditions \eqref{eq:rangecondeven}, \eqref{eq:rangecondbteven} and \eqref{eq:rangecondbtodd}, since the application of the formula for Neumann traces on Dirichlet traces equals zero. It could be of great interest to derive specific inversion formulas, which use filters that are adjusted to the weights $(a,b)$ and might offer improved robustness.

In the simulations, we have seen that the numerical reconstructions of our new inversion formulas clearly show better results than the old formulas for unlimited time measurements, in particular the formula for mixed data. The relevance of our new inversion formulas for real-world data is that only data up to finite time $T$ are required. Other formulas using $T=\infty$ suggest that $T$ should be taken as large as possible in the actual measurement. This, however, results in an increased amount of noise and fake signals in the data, which are avoided by finite time inversion formulas. Therefore, the new photoacoustic inversion formulas provide a significant improvement to existing inversion formulas for unbounded time intervals, and thus for real-world applications in PAT.

\section{Appendix}
\label{sec:appendix}
\subsection{Solution formulas for the wave equation}
\label{subsec:solwaveeq}
As can be observed in \cite{DreHal20,DreHal21}, the derivation of the inversion formulas for Neumann traces are largely based on the analytic expression of the solution of the standard wave equation \eqref{eq:waveeq}. For example, in \cite{Eva10}, there is presented the well-known solution formula
\begin{equation}
	\label{eq:solwaveeqeven}
	\begin{aligned}
	u(x,t)=\frac{1}{\gamma_n\omega_n}&\Bigg[\partial_t\left(\frac{1}{t}\partial_t\right)^{\frac{n-2}{2}}\left(\int_{\B_t^n(x)}\frac{f(y)}{\sqrt{t^2-\norm{y-x}^2}}\d{y}\right)\\
	&+\left(\frac{1}{t}\partial_t\right)^{\frac{n-2}{2}}\left(\int_{\B_t^n(x)}\frac{g(y)}{\sqrt{t^2-\norm{y-x}^2}}\d{y}\right)\Bigg].
	\end{aligned}
\end{equation}
for even dimensions $n\geq 2$ and $f,g\in C_c^\infty(\Omega)$, where $\B_t^n(x)$ is the open ball with radius $t$ and center $x$ in $\R^n$ and $\gamma_n\coloneqq 2\cdot 4\cdots (n-2)\cdot n$. Another representation formula is given by
\begin{equation}
	\label{eq:solwaveeqeven2}
	\begin{aligned}
		u(x,t)=\frac{n}{\gamma_n}&\Bigg[\partial_t\left(\int_0^t\frac{r}{\sqrt{t^2-r^2}}\left(\frac{1}{r}\partial_r\right)^{\frac{n-2}{2}}\left(r^{n-2}\M f(x,r)\right)\d{r}\right)\\
		&+\left(\int_0^t\frac{r}{\sqrt{t^2-r^2}}\left(\frac{1}{r}\partial_r\right)^{\frac{n-2}{2}}\left(r^{n-2}\M g(x,r)\right)\d{r}\right)\Bigg],
	\end{aligned}
\end{equation}
(see, for example, \cite[Lemma 2.2]{DreHal21}). Based on \eqref{eq:solwaveeqeven2}, we now present an analytic expression for the directional derivative of the solution of the wave equation along a vector $v\in\R^n$, being used in section \ref{sec:timeboundinveven}.
Before that, we present another technical result for the operator $\left(\frac{1}{r}\partial_r\right)^{k}r^{n-2}\circ \M$, appearing in formula \eqref{eq:solwaveeqeven2}. Formula \eqref{eq:solwaveeqeven} will be used in section \ref{sec:timeboundinvevenmixed}.
\begin{lemma}
\label{lem:relsopspmean}
Let $n\geq 2$ be an integer and  $f\in C_c^\infty(\Omega)$. For every $k\in\N$ and $(x,r)\in\R^n\times(0,\infty)$, we have
	\begin{equation*}
		\left(\frac{1}{r}\partial_r\right)^{k}r^{n-2}\M f(x,r)	=\sum_{l=0}^k c_{k,l}^{(n)}r^{n-(2k+1)+l}\frac{1}{\sigma(\Sp^{n-1})}\int_{\Sp^{n-1}}\sum_{i\in\set{1,\ldots,n}^l}\partial^if(x+ry)y^i\d{\sigma(y)},
	\end{equation*}
	where $\partial^i\coloneqq \partial_{i_1}\ldots\partial_{i_k}$, $y^i\coloneqq y_{i_1}\cdot\ldots\cdot y_{i_k}$ and the coefficients are recursively defined by $c_{0,0}^{(n)}\coloneqq 1$, $c_{1,0}^{(n)}\coloneqq n-2$, $c_{1,1}^{(n)}\coloneqq 1$, $c_{\tilde{k},0}^{(n)}\coloneqq c_{\tilde{k}-1,0}^{(n)}(n-2\tilde{k})$, $c_{\tilde{k},\tilde{k}}^{(n)}\coloneqq 1$ and $c_{\tilde{k},l}^{(n)}\coloneqq c_{\tilde{k}-1,l-1}^{(n)}+c_{\tilde{k}-1,l}^{(n)}(n-2\tilde{k}+l)$ for all $\tilde{k}\in\set{2,\ldots,k}$ and $l\in\{1,\ldots,\tilde{k}-1\}$.
\end{lemma}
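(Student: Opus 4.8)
The plan is to argue by induction on $k$. For the base case $k=0$ the claim reduces to $r^{n-2}\M f(x,r)=r^{n-2}\,\frac{1}{\sigma(\Sp^{n-1})}\int_{\Sp^{n-1}}f(x+ry)\d{\sigma(y)}$, which is exactly the definition of $\M$ together with the identity $\sigma(\Sp^{n-1})=n\omega_n$ that reconciles the normalizing factor $\frac{1}{n\omega_n}$ in \eqref{eq:defspmeanop} with the factor $\frac{1}{\sigma(\Sp^{n-1})}$ in the statement; here the only index is the empty tuple, for which $\partial^i f=f$ and $y^i=1$, and $c_{0,0}^{(n)}=1$. Differentiating once and comparing with the prescribed values $c_{1,0}^{(n)}=n-2$ and $c_{1,1}^{(n)}=1$ settles $k=1$ and fixes the normalization.

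For the inductive step I would apply the operator $\frac{1}{r}\partial_r$ to the formula for $k$ and read off the formula for $k+1$. Since $f\in C_c^\infty(\Omega)$, differentiation under the integral sign is justified, and each summand $c_{k,l}^{(n)}\,r^{E}\,\frac{1}{\sigma(\Sp^{n-1})}\int_{\Sp^{n-1}}\sum_{i\in\set{1,\ldots,n}^l}\partial^i f(x+ry)\,y^i\d{\sigma(y)}$ is handled by the product rule. Differentiating the monomial $r^{E}$ and dividing by $r$ contributes $E\,r^{E-2}$ at the same level $l$, while differentiating the integrand via the chain rule gives $\partial_r\bigl[\partial^i f(x+ry)\bigr]=\sum_{j=1}^n\partial_j\partial^i f(x+ry)\,y_j$, raising the level to $l+1$ and contributing at power $r^{E-1}$ after division by $r$. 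Because within the $k$-th formula the exponents at consecutive levels differ by exactly one, these two kinds of contributions line up so that at each fixed level of the $(k+1)$-th formula all terms again carry a single common power of $r$, and the structural form of the expansion is preserved.

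The key combinatorial point is the reindexing $\sum_{i\in\set{1,\ldots,n}^l}\sum_{j=1}^n \partial_j\partial^i f(x+ry)\,y_j\,y^i=\sum_{i'\in\set{1,\ldots,n}^{l+1}}\partial^{i'}f(x+ry)\,y^{i'}$, which holds because appending $j$ to the tuple $i$ is a bijection between $\set{1,\ldots,n}^l\times\set{1,\ldots,n}$ and $\set{1,\ldots,n}^{l+1}$. Collecting the two sources of a level-$l$ term in the $(k+1)$-th formula — the power-rule contribution $c_{k,l}^{(n)}E$ from the level-$l$ summand and the chain-rule contribution $c_{k,l-1}^{(n)}$ from the level-$(l-1)$ summand — and substituting the value of the exponent $E$ reproduces exactly the recursion $c_{k+1,l}^{(n)}=c_{k,l-1}^{(n)}+c_{k,l}^{(n)}\bigl(n-2(k+1)+l\bigr)$. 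The boundary indices need separate treatment: at $l=0$ there is no level-$(-1)$ predecessor, giving $c_{k+1,0}^{(n)}=c_{k,0}^{(n)}\bigl(n-2(k+1)\bigr)$, and at the top level $l=k+1$ there is no same-level term, giving $c_{k+1,k+1}^{(n)}=c_{k,k}^{(n)}=1$.

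I expect the main difficulty to be organizational rather than conceptual: correctly matching the two contributions to each coefficient, verifying the boundary cases $l=0$ and $l=k+1$ against the stated recursion, and carefully tracking how the power of $r$ at each level decreases under $\frac{1}{r}\partial_r$. The reindexing of the nested multi-index sums, although elementary, is the step where an error would most easily creep in, so I would isolate it as a separate identity and verify it before assembling the induction.
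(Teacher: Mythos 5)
The paper does not actually prove this lemma in-line: its ``proof'' is a one-line citation of \cite[Lemma 3.4]{DreHal21}, so your self-contained induction on $k$ is a genuinely different (and more complete) route. The structure of your argument is the right one: differentiation under the integral sign (justified by $f\in C_c^\infty$), the bijective reindexing of $\set{1,\ldots,n}^l\times\set{1,\ldots,n}$ onto $\set{1,\ldots,n}^{l+1}$ that converts the chain-rule term into the level-$(l+1)$ integral, the collection of the same-level power-rule contribution $c_{k,l}^{(n)}E$ together with the lower-level chain-rule contribution $c_{k,l-1}^{(n)}$, and the separate boundary cases $l=0$ and $l=k+1$ all reproduce the stated recursion. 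This is exactly the computation one would expect behind the cited external lemma.

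One point you must make explicit, because as written your proof silently corrects the statement. The recursion closes only if the exponent at level $l$ of the $k$-th formula is $E=n-2-2k+l$, since then $c_{k,l}^{(n)}E=c_{k,l}^{(n)}\bigl(n-2(k+1)+l\bigr)$ as your inductive step requires; the exponent printed in the lemma is $n-(2k+1)+l$, which is larger by one. Your own base case already uses the corrected value: at $k=l=0$ you write $r^{n-2}$, whereas the displayed formula gives $r^{n-1}$, and with $\sigma(\Sp^{n-1})=n\omega_n$ the two sides of the printed identity then differ by a factor of $r$ (one can also check directly that $\tfrac{1}{r}\partial_r\bigl(r^{n-2}\M f\bigr)=(n-2)r^{n-4}\M f+r^{n-3}\partial_r\M f$, not $(n-2)r^{n-3}\M f+r^{n-2}\partial_r\M f$). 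So what your induction establishes --- correctly --- is the identity with exponent $r^{n-2-2k+l}$ and the stated coefficient recursion; you should either flag the printed exponent as a typo or state explicitly which version you are proving, since as literally displayed the $k=0$ and $k=1$ instances of the identity fail.
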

\begin{proof}
	For the proof of this identity, we refer to \cite[Lemma 3.4]{DreHal21}, where a proof of a similar identity can be found.
\end{proof}
\begin{prop}
	\label{prop:dvsolwaveeqeven}
	Let $n\geq 2$ be even, $v$ be a vector $\R^n$, $\Omega\subset\R^n$ an open domain and $f,g\in C_c^\infty(\Omega)$. Moreover, let $u\colon\R^n\times [0,\infty)\to\R$ be the solution of \eqref{eq:waveeq}.
	\begin{enumerate}
		\item For all $(x,t)\in\R^n\times(0,\infty)$ we have
		\begin{equation}
			\label{eq:dvsolwaveeqeven1}
			\begin{aligned}
				D_v u(x,t)=\frac{n}{\gamma_n}&\Bigg[\partial_t\left(\int_0^t\frac{r}{\sqrt{t^2-r^2}}\left(r^{-1}\partial_r\right)^{\frac{n-2}{2}}\left(r^{n-2}D_v\M f(x,r)\right)\d{r}\right)\\
				&+\left(\int_0^t\frac{r}{\sqrt{t^2-r^2}}\left(r^{-1}\partial_r\right)^{\frac{n-2}{2}}\left(r^{n-2}D_v\M g(x,r)\right)\d{r}\right)\Bigg].
			\end{aligned}
		\end{equation}
		\item For $x\in\partial\Omega$ and $k\in\N$ it holds
		\begin{equation}
			\label{eq:dvsolwaveeqeven2}
			\begin{aligned}
				\left(\partial_t t^{-1}\right)^k & D_v u(x,t)\\
=&\frac{n}{\gamma_n}\Bigg[\left(\int_0^t\frac{r}{t\sqrt{t^2-r^2}}\left(\partial_r r \left(r^{-1}\partial_r\right)^{\frac{n-2}{2}+k}r^{n-2}D_v\M f(x,r)\right)\d{r}\right)\\
				&+\left(\partial_t t^{-1}\right)^k\left(\int_0^t\frac{r}{\sqrt{t^2-r^2}}\left(r^{-1}\partial_r\right)^{\frac{n-2}{2}}\left(r^{n-2}D_v\M g(x,r)\right)\d{r}\right)\Bigg].
			\end{aligned}
		\end{equation}
	\end{enumerate}
\end{prop}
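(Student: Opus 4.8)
The plan is to read off \eqref{eq:dvsolwaveeqeven1} almost for free from the representation \eqref{eq:solwaveeqeven2}, and then to obtain \eqref{eq:dvsolwaveeqeven2} by repeatedly applying $\partial_t t^{-1}$, where the only genuine work is a single inductive integration-by-parts identity. For the first part I would observe that $D_v=\scp{v,\nabla}$ has constant coefficients and hence commutes with the wave operator $\partial_t^2-\Delta$. Therefore $D_v u$ is again a solution of \eqref{eq:waveeq}, now with initial data $(D_v f, D_v g)$, which still lie in $C_c^\infty(\Omega)$. Applying \eqref{eq:solwaveeqeven2} to $D_v u$ with these data and using that the spherical mean commutes with $D_v$, that is $\M(D_v f)(x,r)=D_v\M f(x,r)$ (obtained by differentiating under the integral in \eqref{eq:defspmeanop}), gives exactly \eqref{eq:dvsolwaveeqeven1}. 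The interchanges of $D_v$ with $\partial_t$, with the $r$-integration and with $(r^{-1}\partial_r)^{(n-2)/2}$ are justified since $f,g\in C_c^\infty(\Omega)$ makes $\M f(x,\cdot)$ and its derivatives smooth with compact support in $r$.

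For the second part I would apply $(\partial_t t^{-1})^k$ to \eqref{eq:dvsolwaveeqeven1}. The $g$-term is reproduced verbatim as the second term of \eqref{eq:dvsolwaveeqeven2}, so the whole matter reduces to the $f$-term. Writing $\Phi(r):=(r^{-1}\partial_r)^{(n-2)/2}(r^{n-2}D_v\M f(x,r))$ and using $(r^{-1}\partial_r)^{(n-2)/2+k}r^{n-2}D_v\M f=(r^{-1}\partial_r)^k\Phi$, the claim becomes the operator identity
\begin{equation*}
(\partial_t t^{-1})^k\partial_t\int_0^t\frac{r\Phi(r)}{\sqrt{t^2-r^2}}\d{r}=\frac{1}{t}\int_0^t\frac{r\,\partial_r r(r^{-1}\partial_r)^k\Phi(r)}{\sqrt{t^2-r^2}}\d{r}.
\end{equation*}
Here the hypothesis $x\in\partial\Omega$ is crucial: since $f$ has compact support in $\Omega$ we have $\mathrm{dist}(x,\supp{f})>0$, hence $\M f(x,r)=0$ for all small $r$, so $\Phi$ and all its derivatives vanish near $r=0$.

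I would prove the displayed identity by induction on $k$. For the base case $k=0$, the substitution $r=t\sin\varphi$ rewrites the left integral as $t\int_0^{\pi/2}\sin\varphi\,\Phi(t\sin\varphi)\d{\varphi}$; differentiating under the integral and recognizing the factor $\partial_r(r\Phi)$ yields the base identity. For the inductive step I would apply $\partial_t t^{-1}$ to the case-$k$ identity, expand the $t$-derivative (which produces explicit powers $t^{-2},t^{-3}$), and re-apply the base case to the new integrand $\Lambda:=\partial_r r(r^{-1}\partial_r)^k\Phi$. After collecting the powers of $t$, the step collapses to the single integration-by-parts identity
\begin{equation*}
\int_0^t r\sqrt{t^2-r^2}\,\Theta''(r)\d{r}=\int_0^t\frac{r\left(r\Theta'(r)-\Theta(r)\right)}{\sqrt{t^2-r^2}}\d{r},\qquad \Theta:=(r^{-1}\partial_r)^k\Phi,
\end{equation*}
which I would verify by integrating the left side by parts twice; the endpoint $r=t$ contributes nothing because $\sqrt{t^2-r^2}$ vanishes there, and the only boundary term left at $r=0$ is a multiple of $\Theta(0)$, which is zero by the vanishing of $\Phi$ near the origin.

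The main obstacle I anticipate is precisely this bookkeeping in the inductive step: $(\partial_t t^{-1})^k\partial_t$ generates several negative powers of $t$ together with cross terms, and it is not obvious a priori that they recombine into the single clean kernel $\frac{1}{t\sqrt{t^2-r^2}}$ with integrand $\partial_r r(r^{-1}\partial_r)^k\Phi$. The resolution is that all spurious contributions are exactly the boundary terms of the two integrations by parts, and these cancel only because $x\in\partial\Omega$ forces $\Theta(0)=0$; without this hypothesis the formula would acquire extra terms, as one checks directly already for a constant $\Phi$. A secondary, routine point is to confirm throughout that the $1/\sqrt{t^2-r^2}$ singularity at $r=t$ is integrable and that all differentiations under the integral sign are legitimate, both of which follow from the smoothness and compact $r$-support of $\M f(x,\cdot)$.
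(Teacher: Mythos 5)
Your proof is correct and rests on the same two pillars as the paper's: the kernel identity $\partial_t\int_0^t \frac{r h(r)}{\sqrt{t^2-r^2}}\d{r}=\frac{1}{t}\int_0^t\frac{r\partial_r(rh(r))}{\sqrt{t^2-r^2}}\d{r}$ from \cite[Proposition 3.1]{FinHalRak07} and the vanishing of $\M f(x,\cdot)$ near $r=0$ for $x\in\partial\Omega$, which kills all boundary terms. The differences are organizational. For part (i) the paper differentiates the representation \eqref{eq:solwaveeqeven2} under the integral sign componentwise, justifying the interchange via the boundedness supplied by Lemma \ref{lem:relsopspmean}, and then sums against $v$ using \eqref{eq:reldirder}; you instead note that $D_v$ commutes with the wave operator, so $D_vu$ is the solution of \eqref{eq:waveeq} with data $(D_vf,D_vg)$, and apply \eqref{eq:solwaveeqeven2} to it. Your route is slightly cleaner, at the cost of invoking uniqueness for \eqref{eq:waveeq} and the commutation $\M(D_vf)=D_v\M f$. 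For part (ii) the paper iterates an ``integration by parts, then Leibniz'' step, shifting one power of $r^{-1}\partial_r$ per application of $t^{-1}$, and uses the cited kernel identity once at the end; you run a formal induction in which that identity is the base case (reproved via $r=t\sin\varphi$) and the step is your two-fold integration-by-parts identity for $\Theta=(r^{-1}\partial_r)^k\Phi$. I checked that step: applying $\partial_t t^{-1}$ to the case-$k$ formula and using the base case on $\Lambda=\partial_r(r\Theta)$ reduces the claim exactly to $\int_0^t r\sqrt{t^2-r^2}\,\Theta''(r)\d{r}=\int_0^t\frac{r(r\Theta'(r)-\Theta(r))}{\sqrt{t^2-r^2}}\d{r}$, whose only surviving boundary contribution is a multiple of $\Theta(0)$; this vanishes precisely because $x\in\partial\Omega$, so your diagnosis of where that hypothesis enters matches the paper's (it is used there to kill the same $r=0$ boundary term). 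Your induction makes fully explicit the $k>1$ case that the paper dispatches with ``repeat the argument $k$ times,'' which is a worthwhile gain in rigor at the cost of some extra bookkeeping.
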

\begin{proof}
	\begin{enumerate}[wide=\parindent,label=(\roman*)]
		\item We start with the proof of the first identity. From Lemma \ref{lem:relsopspmean}, we see that $\partial_i\left(\frac{1}{r}\partial_r\right)^{\frac{n-2}{2}}r^{n-2}\M f$ is a bounded function for $1\leq i\leq n$. Therefore, interchanging the time derivative with the differential operator $\partial_i$, differentiating under the integral sign and interchanging $\partial_i$ with $\left(\frac{1}{r}\partial_r\right)^{\frac{n-2}{2}}r^{n-2}\M f$ yield
	\begin{equation*}
		\begin{aligned}
			\partial_i u(x,t)=\frac{n}{\gamma_n}&\Bigg[\partial_t\left(\int_0^t\frac{r}{\sqrt{t^2-r^2}}\left(\frac{1}{r}\partial_r\right)^{\frac{n-2}{2}}\left(r^{n-2}\partial_i\M f(x,r)\right)\d{r}\right)\\
			&+\left(\int_0^t\frac{r}{\sqrt{t^2-r^2}}\left(\frac{1}{r}\partial_r\right)^{\frac{n-2}{2}}\left(r^{n-2}\partial_i\M g(x,r)\right)\d{r}\right)\Bigg]
		\end{aligned}
	\end{equation*}
	for $1\leq i\leq n$. Next, we apply the above relation and \eqref{eq:reldirder} on $D_v u(x,t)$ to deduce
	\begin{align*}
		D_v u(x,t)=&\frac{n}{\gamma_n}\sum_{i=1}^nv_i\Bigg[\partial_t\left(\int_0^t\frac{r}{\sqrt{t^2-r^2}}\left(\frac{1}{r}\partial_r\right)^{\frac{n-2}{2}}\left(r^{n-2}\partial_i\M f(x,r)\right)\d{r}\right)\\
			&+\left(\int_0^t\frac{r}{\sqrt{t^2-r^2}}\left(\frac{1}{r}\partial_r\right)^{\frac{n-2}{2}}\left(r^{n-2}\partial_i\M g(x,r)\right)\d{r}\right)\Bigg]\\
			=&\frac{n}{\gamma_n}\Bigg[\partial_t\left(\int_0^t\frac{r}{\sqrt{t^2-r^2}}\left(\frac{1}{r}\partial_r\right)^{\frac{n-2}{2}}\left(r^{n-2}\sum_{i=1}^nv_i\partial_i\M f(x,r)\right)\d{r}\right)\\
			&+\left(\int_0^t\frac{r}{\sqrt{t^2-r^2}}\left(\frac{1}{r}\partial_r\right)^{\frac{n-2}{2}}\left(r^{n-2}\sum_{i=1}^nv_i\partial_i\M g(x,r)\right)\d{r}\right)\Bigg].	
	\end{align*}
	Finally, using \eqref{eq:reldirder} inside the integrals again shows the desired identity.
		\item First, we consider the case $k=1$. Applying integration by parts on the first term on the right-hand side in \eqref{eq:dvsolwaveeqeven1} yields the sum of two boundary terms plus the integral \[\int_0^t\sqrt{t^2-r^2}\partial_r\left(r^{-1}\partial_r\right)^{\frac{n-2}{2}}r^{n-2}D_v\M f(x,r)\d{r}.\] Since $f$ has compact support in $\Omega$ and $y\in\partial\Omega$, we see from the definition of the spherical mean operator that $\left(r^{-1}\partial_r\right)^{\frac{n-2}{2}}r^{n-2}D_v\M f(x,r)=0$ for sufficient small $r>0$. Therefore, both boundary terms are equal to zero. Then, using Leibniz's integral rule implies \[\left(t^{-1}\partial_t\right)D_v u(x,t)=\int_0^t\frac{r}{\sqrt{t^2-r^2}}\left(r^{-1}\partial_r\right)^{\frac{n-2}{2}+1}r^{n-2}D_v\M f(x,r)\d{r}.\] Then, the second identity follows from the integral identity \[\partial_t\int_0^t \frac{rh(r)}{\sqrt{t^2-r^2}}\d{r}=\frac{1}{t}\int_0^t \frac{r\partial_r rh(r)}{\sqrt{t^2-r^2}}\d{r},\] given in \cite[Proposition 3.1]{FinHalRak07}. The case $k>1$ can be shown by repeating the first argument $k$-times and an application of the above integral identity in the last step.\qedhere
	\end{enumerate}
\end{proof}
\begin{remark}
	Similarly as in Proposition \ref{prop:dvsolwaveeqeven}, one can show that the formula
	\begin{equation}
		\label{eq:solwaveeqeven3}
		\begin{aligned}
			\left(\partial_t t^{-1}\right)^k  u(x,t)=&\frac{n}{\gamma_n}\Bigg[\left(\int_0^t\frac{r}{t\sqrt{t^2-r^2}}\left(\partial_r r \left(r^{-1}\partial_r\right)^{\frac{n-2}{2}+k}r^{n-2}\M f(x,r)\right)\d{r}\right)\\
			&+\left(\partial_t t^{-1}\right)^k\left(\int_0^t\frac{r}{\sqrt{t^2-r^2}}\left(r^{-1}\partial_r\right)^{\frac{n-2}{2}}\left(r^{n-2}\M g(x,r)\right)\d{r}\right)\Bigg].
		\end{aligned}
	\end{equation}
	for all $(x,t)\in\R^n\times(0,\infty)$ holds.
\end{remark}
\subsection{Abel integral equations}
\label{sec:abelinteq}
Typical Abel integral equations have the form
\begin{equation}
	\label{eq:abelinteq}
	\int_a^x \frac{u(r)}{(x-r)^{1-\alpha}}\d{r}=f(x),
\end{equation}
where $-\infty\leq a<b\leq\infty$, $a<x<b$ and $\alpha\in(0,1)$. Here, $f\colon(a,b)\to\R$ is a given function and $u\colon(a,b)\to\R$ the function to be determined. In \cite{GorVes91}, it is mentioned that, for example, if $f$ is absolute continuous, then \eqref{eq:abelinteq} has a unique solution in $L^1((a,b))$ and $u$ is given by the formula
\begin{equation}
	\label{eq:solabelinteq}
	u(r)=\frac{1}{\Gamma(1-\alpha)\Gamma(\alpha)}\frac{d}{dr}\int_a^r \frac{f(x)}{(r-x)^\alpha}\d{x},\quad r\in(a,b).
\end{equation}
Under the assumption $x\in\partial\Omega$, we can transform equations \eqref{eq:solwaveeqeven2} and \eqref{eq:dvsolwaveeqeven1} into the standard form of the Abel integral equation \eqref{eq:abelinteq} for $\alpha=1/2$, which together with \eqref{eq:solabelinteq} allows to derive the following result:
\begin{prop}
	\label{prop:solabelinteqwaveeqeven}
	Let $n\geq 2$ be even, $x\in\partial\Omega$, $v\in\R^n$ and $u\colon\R^n\times [0,\infty)\to\R$ the solution of \eqref{eq:waveeq} with initial data $(f,0)$. Then, we have for every radius $r>0$
	\begin{equation}
		\label{eq:solabelinteqwaveeq}
		\left(\frac{1}{r}\partial_r\right)^{\frac{n-2}{2}}r^{n-2}\M f(x,r)=\frac{2\gamma_n}{\pi n}\int_0^r\frac{u(x,t)}{\sqrt{r^2-t^2}}\d{t}
	\end{equation}
	and
	\begin{equation}
		\label{eq:dvsolabelinteqwaveeq}
		\left(\frac{1}{r}\partial_r\right)^{\frac{n-2}{2}}r^{n-2}D_v\M f(x,r)=\frac{2\gamma_n}{\pi n}\int_0^r\frac{D_v u(x,t)}{\sqrt{r^2-t^2}}\d{t}.
	\end{equation}
\end{prop}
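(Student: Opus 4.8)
The plan is to read off from the representation formula \eqref{eq:solwaveeqeven2} (with $g=0$) an Abel integral equation for the radial profile $\phi(r)\coloneqq\left(\frac{1}{r}\partial_r\right)^{\frac{n-2}{2}}r^{n-2}\M f(x,r)$ and then invert it using \eqref{eq:solabelinteq}. Fix $x\in\partial\Omega$. Since $\supp{f}$ is a compact subset of the open set $\Omega$, there is a $d>0$ with $\norm{x-y}\geq d$ for all $y\in\supp{f}$; hence $\M f(x,r)=0$ for $r<d$ and, by finite speed of propagation, $u(x,t)=0$ for $t<d$. In particular $\phi$ and $u(x,\cdot)$ vanish near the origin, which is what will make the boundary terms below disappear. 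Specializing \eqref{eq:solwaveeqeven2} to initial data $(f,0)$ gives $u(x,t)=\frac{n}{\gamma_n}\partial_t\int_0^t\frac{r\phi(r)}{\sqrt{t^2-r^2}}\d{r}$.

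First I would integrate this relation in $t$. Using that the inner integral vanishes at $t=0$, I obtain the Abel-type equation
\[
\int_0^t\frac{r\phi(r)}{\sqrt{t^2-r^2}}\d{r}=G(t),\qquad G(t)\coloneqq\frac{\gamma_n}{n}\int_0^t u(x,s)\d{s}.
\]
Substituting $\tau=t^2$ and $\rho=r^2$ turns the left-hand side into $\frac{1}{2}\int_0^\tau\frac{\phi(\sqrt{\rho})}{\sqrt{\tau-\rho}}\d{\rho}$, which is exactly the standard form \eqref{eq:abelinteq} with $a=0$ and $\alpha=1/2$. Applying the inversion formula \eqref{eq:solabelinteq}, using $\Gamma(1/2)^2=\pi$, and changing variables back to $r$ and $t$ yields
\[
\phi(r)=\frac{2}{\pi r}\frac{d}{dr}\int_0^r\frac{t\,G(t)}{\sqrt{r^2-t^2}}\d{t}.
\]

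It then remains to collapse this double integral to the claimed single integral. Writing $H(t)\coloneqq\int_0^t u(x,s)\d{s}$, I would integrate by parts in the inner integral via $\frac{t}{\sqrt{r^2-t^2}}=-\frac{d}{dt}\sqrt{r^2-t^2}$; the boundary contributions vanish because $\sqrt{r^2-t^2}=0$ at $t=r$ and $H(0)=0$, leaving $\int_0^r\sqrt{r^2-t^2}\,u(x,t)\d{t}$. Differentiating this by Leibniz's rule (the endpoint term again vanishes as $\sqrt{r^2-t^2}=0$ at $t=r$) produces $r\int_0^r\frac{u(x,t)}{\sqrt{r^2-t^2}}\d{t}$, so that $\phi(r)=\frac{2\gamma_n}{\pi n}\int_0^r\frac{u(x,t)}{\sqrt{r^2-t^2}}\d{t}$, which is \eqref{eq:solabelinteqwaveeq}.

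Finally, the second identity \eqref{eq:dvsolabelinteqwaveeq} follows by the identical argument applied to \eqref{eq:dvsolwaveeqeven1} in place of \eqref{eq:solwaveeqeven2}, with $\M f$ and $u$ replaced throughout by $D_v\M f$ and $D_v u$; the same support estimate guarantees that $D_v\M f(x,\cdot)$ and $D_v u(x,\cdot)$ vanish near $0$, so every boundary term drops out as before. I expect the main obstacle to be the careful bookkeeping in this last simplification — justifying differentiation under the integral sign and the vanishing of the boundary terms in both the integration by parts and the Leibniz step — rather than the Abel inversion itself, which is a direct application of \eqref{eq:solabelinteq}.
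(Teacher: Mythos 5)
Your argument is correct and follows essentially the same route as the paper: both recognize the representation \eqref{eq:solwaveeqeven2} (with $g=0$) as an Abel integral equation in the squared variables $\rho=r^2$, $\tau=t^2$ and invert it with \eqref{eq:solabelinteq} for $\alpha=1/2$. The only difference is the order of operations — the paper first integrates by parts in $r$ and applies the Leibniz rule to obtain an Abel equation for the $r$-derivative of the left-hand side, inverts, and integrates from $0$ to $r$ at the very end, whereas you integrate in $t$ first to get an Abel equation for the profile itself and then collapse the resulting double integral by integration by parts and the Leibniz rule; both variants rest on the same vanishing of $\M f(x,\cdot)$ and $u(x,\cdot)$ near the origin to dispose of all boundary terms.
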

\begin{proof}
	We give a proof for \eqref{eq:solabelinteqwaveeq}. Relation \eqref{eq:dvsolabelinteqwaveeq} can be proved analogously.
	
	First, we apply integration by parts to obtain for $t>0$
	\begin{equation*}
		\int_0^t\frac{r}{\sqrt{t^2-r^2}}\left(\frac{1}{r}\partial_r\right)^{\frac{n-2}{2}}\left(r^{n-2}\M f(x,r)\right)\d{r}=\int_0^t \sqrt{t^2-r^2}\partial_r\left(\frac{1}{r}\partial_r\right)^{\frac{n-2}{2}}\left(r^{n-2}\M f(x,r)\right)\d{r},
	\end{equation*}
	where we used that $\M f(x,\cdot)$ has compact support. Hence, the Leibniz-rule for integrals and \eqref{eq:solwaveeqeven2} imply \[u(x,t)=\frac{n}{\gamma_n}\int_0^t\frac{t}{\sqrt{t^2-r^2}}\partial_r\left(\frac{1}{r}\partial_r\right)^{\frac{n-2}{2}}\left(r^{n-2}\M f(x,r)\right)\d{r},\]
	and therefore \[\frac{1}{\sqrt{t}}\frac{\gamma_n}{n}u(x,\sqrt{t})=\int_0^t\frac{1}{\sqrt{t-r'}2\sqrt{r'}}\left.\partial_r\left(\frac{1}{r}\partial_r\right)^{\frac{n-2}{2}}\left(r^{n-2}\M f(x,r))\right)\right\vert_{r=\sqrt{r'}}\d{r'}\] by substituting $r'=r^2$. Now, we make use of formula \eqref{eq:solabelinteq} and the relation $\Gamma(\frac{1}{2})=\sqrt{\pi}$ to deduce for $r>0$ \[\frac{1}{2\sqrt{r'}}\left.\partial_r\left(\frac{1}{r}\partial_r\right)^{\frac{n-2}{2}}\left(r^{n-2}\M f(x,r))\right)\right\vert_{r=\sqrt{r'}}=\frac{\gamma_n}{\pi n}\left.\left(\frac{d}{dr}\int_0^r \frac{u(x,\sqrt{t'})}{\sqrt{t'}\sqrt{r-t'}}\d{t'}\right)\right\vert_{r=\sqrt{r'}}.\] Next, multiplying both sides with $2\sqrt{r'}$ and applying the chain rule lead to \[\partial_r\left(\frac{1}{r}\partial_r\right)^{\frac{n-2}{2}}r^{n-2}\M f(x,r)=\frac{\gamma_n}{\pi n}\frac{d}{dr}\int_0^{r^2}\frac{u(x,\sqrt{t'})}{\sqrt{t'}\sqrt{r^2-t'}}\d{t'}=\frac{2\gamma_n}{\pi n}\frac{d}{dr}\int_0^r\frac{u(x,t)}{\sqrt{r^2-t^2}}\d{t},\] where we substituted $t$ with $\sqrt{t'}$ in the last step. Finally, integrating both sides from zero to $r$ and using Lemma \ref{lem:relsopspmean}, we see that \eqref{eq:solabelinteqwaveeq} holds.\qedhere
\end{proof}
\begin{corollary}
	Under the assumptions of Proposition \ref{prop:solabelinteqwaveeqeven}, we have
	\begin{equation}
		\label{eq:solabelinteqwaveeq2}
		\left(\frac{1}{r}\partial_r\right)^{n-2}r^{n-2}\M f(x,r)=\frac{2\gamma_n}{\pi n}\int_0^r\frac{\left(\partial_t t^{-1}\right)^{\frac{n-2}{2}}u(x,t)}{\sqrt{r^2-t^2}}\d{t}
	\end{equation}
	and
	\begin{equation}
		\label{eq:dvsolabelinteqwaveeq2}
		\left(\frac{1}{r}\partial_r\right)^{n-2}r^{n-2}D_v\M f(x,r)=\frac{2\gamma_n}{\pi n}\int_0^r\frac{\left(\partial_t t^{-1}\right)^{\frac{n-2}{2}}D_v u(x,t)}{\sqrt{r^2-t^2}}\d{t}.
	\end{equation}
\end{corollary}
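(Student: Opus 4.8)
The plan is to derive both identities by applying the operator $\left(r^{-1}\partial_r\right)^{\frac{n-2}{2}}$ to the two relations \eqref{eq:solabelinteqwaveeq} and \eqref{eq:dvsolabelinteqwaveeq} of Proposition \ref{prop:solabelinteqwaveeqeven}. On the left-hand side of \eqref{eq:solabelinteqwaveeq} this immediately produces $\left(r^{-1}\partial_r\right)^{n-2}r^{n-2}\M f(x,r)$, so the whole task reduces to transferring the $\frac{n-2}{2}$ copies of $r^{-1}\partial_r$ through the Abel-type integral on the right. Writing $A[h](r)\coloneqq\int_0^r h(t)/\sqrt{r^2-t^2}\d{t}$, I would isolate the single-step commutation identity
\[
	\frac{1}{r}\partial_r A[h](r)=A\left[\left(\partial_t t^{-1}\right)h\right](r)=\int_0^r\frac{\left(\partial_t t^{-1}\right)h(t)}{\sqrt{r^2-t^2}}\d{t}
\]
and then iterate it $\frac{n-2}{2}$ times with $h=u(x,\cdot)$, turning $u$ into $\left(\partial_t t^{-1}\right)^{\frac{n-2}{2}}u$ and yielding \eqref{eq:solabelinteqwaveeq2}.

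To establish this commutation identity I would reuse exactly the change of variables already employed in the proof of Proposition \ref{prop:solabelinteqwaveeqeven}. Setting $s=r^2$, $\tau=t^2$ and $H(\tau)\coloneqq h(\sqrt{\tau})/\sqrt{\tau}$, the integral becomes a standard half-order Abel integral, $A[h](r)=\tfrac12\int_0^s H(\tau)/\sqrt{s-\tau}\d{\tau}$, and because $\tfrac1r\partial_r=2\partial_s$ one is reduced to the classical differentiation rule
\[
	\partial_s\int_0^s\frac{H(\tau)}{\sqrt{s-\tau}}\d{\tau}=\frac{H(0)}{\sqrt{s}}+\int_0^s\frac{H'(\tau)}{\sqrt{s-\tau}}\d{\tau}.
\]
Using $H'(\tau)=\tfrac{1}{2t}\left(\partial_t t^{-1}\right)h(t)$ at $t=\sqrt{\tau}$ and substituting $\tau=t^2$ back recovers $A\left[\left(\partial_t t^{-1}\right)h\right](r)$, provided the boundary term $H(0)/\sqrt{s}$ drops out.

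The essential — and only delicate — point is thus the vanishing of this boundary term, i.e. $H(0)=\lim_{t\searrow0}u(x,t)/t=0$, together with the fact that this property survives each iteration so that the identity may be applied repeatedly to the iterates $\left(\partial_t t^{-1}\right)^k u(x,\cdot)$. Here I would invoke the standing hypothesis $x\in\partial\Omega$ of Proposition \ref{prop:solabelinteqwaveeqeven}: since $f\in C_c^\infty(\Omega)$ has compact support strictly inside $\Omega$, finite speed of propagation gives $u(x,t)=0$ for every $t$ below the positive distance from $x$ to $\supp{f}$. Hence $u(x,\cdot)$, and therefore each $\left(\partial_t t^{-1}\right)^k u(x,\cdot)$, vanishes identically near $t=0$, so $t^{-1}u(x,t)$ extends smoothly by zero, every boundary term is exactly $0$, and the finite integrals over $[0,r]$ cause no convergence trouble despite the even-dimensional wave tail. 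Iterating the commutation identity $\frac{n-2}{2}$ times then gives \eqref{eq:solabelinteqwaveeq2}.

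Finally, \eqref{eq:dvsolabelinteqwaveeq2} follows verbatim from the same computation applied to \eqref{eq:dvsolabelinteqwaveeq}, with $D_v u$ replacing $u$; the only thing to re-check is that $D_v u(x,\cdot)$ also vanishes near $t=0$, which is immediate from $u(x,\cdot)\equiv0$ there together with \eqref{eq:reldirder}.
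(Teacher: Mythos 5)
Your proposal is correct and follows essentially the same route as the paper: both arguments reduce to the one-step commutation identity $\frac{1}{r}\partial_r\int_0^r\frac{h(t)}{\sqrt{r^2-t^2}}\d{t}=\int_0^r\frac{\left(\partial_t t^{-1}\right)h(t)}{\sqrt{r^2-t^2}}\d{t}$, justified by the vanishing of $t^{-1}u(x,t)$ near $t=0$ (finite speed of propagation for $x\in\partial\Omega$), and then iterate it $\tfrac{n-2}{2}$ times. The only cosmetic difference is that you verify the one-step identity by substituting to the standard Abel form and invoking the classical differentiation rule, whereas the paper integrates by parts directly in $t$ and applies Leibniz's rule — the same computation in different clothing.
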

\begin{proof}
	Again, we only show \eqref{eq:solabelinteqwaveeq2}. Applying integration by parts on the right-hand side in \eqref{eq:solabelinteqwaveeq} gives the sum of two boundary terms plus the integral \[\frac{2\gamma_n}{\pi n}\int_0^r\sqrt{r^2-t^2}\partial_tt^{-1}u(x,t)\d{t}.\] From solution formula \eqref{eq:solwaveeqeven} we see that $t^{-1}u(x,t)=0$ for sufficient small $t>0$ and therefore both boundary terms equal zero. Then, using the integral rule of Leibniz implies \[\frac{1}{r}\partial_r\left(\frac{1}{r}\partial_r\right)^{\frac{n-2}{2}}r^{n-2}\M f(x,r)=\frac{2\gamma_n}{\pi n}\int_0^r\frac{\left(\partial_t t^{-1}\right)u(x,t)}{\sqrt{r^2-t^2}}\d{t}.\] The remaining claim follows by applying above arguments inductively.\qedhere
\end{proof}
\subsection{Remaining lemmas}
\begin{lemma}
	\label{lem:intinvsphericalmeanop}
	Let $a,b,c,d>0$, $a\neq b$ and $\max\set{a,b}<c<d$. Then, the following integral can be evaluated to
	\begin{equation}
		\label{eq:intinvsphericalmeanop}
		\int_c^d\frac{1}{x\sqrt{x^2-a^2}\sqrt{x^2-b^2}}\d{x}=F(d)-F(c),
	\end{equation}
	where $F(x)$ is defined by the term
	\begin{equation*}
		\frac{1}{2ab}\log\left(\left.\left(\sqrt{\frac{x^2-\max\set{a,b}^2}{x^2-\min\set{a,b}^2}}+\frac{\max\set{a,b}}{\min\set{a,b}}\right)\right/\left(\frac{\max\set{a,b}}{\min\set{a,b}}-\sqrt{\frac{x^2-\max\set{a,b}^2}{x^2-\min\set{a,b}^2}}\right)\right),
	\end{equation*} 
	for $x>\max\set{a,b}$, i.e., $F$ is an indefinite integral of the integrand in \eqref{eq:intinvsphericalmeanop}. Moreover, \[\int_{\max\set{a,b}}^\infty\frac{1}{x\sqrt{x^2-a^2}\sqrt{x^2-b^2}}\d{x}=\frac{1}{2ab}\log\left(\left.\left(1+\frac{\max\set{a,b}}{\min\set{a,b}}\right)\right/\left(\frac{\max\set{a,b}}{\min\set{a,b}}-1\right)\right).\]
\end{lemma}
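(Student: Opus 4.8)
The plan is to prove the indefinite-integral claim by verifying directly that the stated $F$ is an antiderivative of the integrand on $(\max\set{a,b},\infty)$, and then to read off both displayed formulas from the fundamental theorem of calculus. Write $M\coloneqq\max\set{a,b}$, $m\coloneqq\min\set{a,b}$, and introduce the auxiliary function $g(x)\coloneqq\sqrt{(x^2-M^2)/(x^2-m^2)}$, so that on $(M,\infty)$ one has $g\in(0,1)$, hence $M/m-g>M/m-1>0$, and both logarithms below are well defined with
\[
F(x)=\frac{1}{2Mm}\left(\log\left(g(x)+\frac{M}{m}\right)-\log\left(\frac{M}{m}-g(x)\right)\right).
\]
Since the integrand is continuous on every compact $[c,d]\subset(M,\infty)$ and $Mm=ab$, it suffices to show $F'(x)=1/\bigl(x\sqrt{x^2-a^2}\sqrt{x^2-b^2}\bigr)$.

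First I would differentiate $F$. Combining the two logarithmic terms over the common denominator $(M/m)^2-g(x)^2$ gives $F'(x)=\frac{g'(x)}{m^2}\cdot\frac{1}{(M/m)^2-g(x)^2}$, and a short computation yields $(M/m)^2-g(x)^2=(M^2-m^2)x^2/\bigl(m^2(x^2-m^2)\bigr)$. Differentiating the identity for $g^2$ produces $g'(x)=x(M^2-m^2)/\bigl((x^2-m^2)^{3/2}\sqrt{x^2-M^2}\bigr)$. Substituting these two expressions and cancelling the factor $M^2-m^2$ and one power of $x$ collapses everything to $F'(x)=1/\bigl(x\sqrt{x^2-m^2}\sqrt{x^2-M^2}\bigr)$, which is exactly the integrand since $\set{M,m}=\set{a,b}$. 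The fundamental theorem of calculus then gives the first identity \eqref{eq:intinvsphericalmeanop}.

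For the improper integral I would first note that the integrand is $O\bigl((x-M)^{-1/2}\bigr)$ as $x\searrow M$ and $O(x^{-3})$ as $x\to\infty$, so it is integrable on $(M,\infty)$ and the integral equals $\lim_{x\to\infty}F(x)-\lim_{x\searrow M}F(x)$. Since $g(x)\to0$ as $x\searrow M$, the argument of $F$ tends to $(M/m)/(M/m)=1$ and $F(x)\to0$; since $g(x)\to1$ as $x\to\infty$, one obtains $F(x)\to\frac{1}{2Mm}\log\bigl((1+M/m)/(M/m-1)\bigr)$, and using $Mm=ab$ yields the claimed value.

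The computations are all elementary, so the only real obstacles are bookkeeping ones: keeping the algebra in the derivative clean — the common-denominator step and the simplification of $g'$ are where sign or power errors are most likely — and correctly identifying the two boundary limits of $F$, in particular checking that the $x\searrow M$ limit vanishes rather than diverges.
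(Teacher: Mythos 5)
Your proposal is correct. You verify directly that the stated $F$ is an antiderivative by differentiating it, whereas the paper \emph{derives} $F$ through a chain of substitutions ($x=\sqrt{u^2+a^2}$, then $u=\sqrt{b^2-a^2}\sec(v)$, then $w=\sin(v)$), reducing the integral to the standard form $\int (b^2/a^2-w^2)^{-1}\,dw$ and then undoing the substitutions via $\sin(\mathrm{arcsec}(x))=\sqrt{x^2-1}/x$. Your route is shorter and less error-prone here precisely because the lemma already hands you the candidate antiderivative, so verification by differentiation suffices; I checked the key algebra and it goes through: with $g(x)=\sqrt{(x^2-M^2)/(x^2-m^2)}$ one indeed gets $F'=\frac{g'}{m^2}\cdot\frac{1}{(M/m)^2-g^2}$, $(M/m)^2-g^2=(M^2-m^2)x^2/(m^2(x^2-m^2))$ and $g'=x(M^2-m^2)/((x^2-m^2)^{3/2}\sqrt{x^2-M^2})$, which collapse to the integrand. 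The paper's substitution argument has the advantage of showing how $F$ would be found in the first place, which is why the authors present it that way, but it buys nothing extra for the truth of the lemma. For the improper integral both arguments coincide: the paper also uses $F(\max\set{a,b})=0$ and $\lim_{d\to\infty}F(d)$; your additional remark on the integrability of the singularity at $x=M$ and the $O(x^{-3})$ decay at infinity is a welcome (if minor) tightening that the paper leaves implicit. One small point worth making explicit in a final write-up is that $a\neq b$ gives $M>m$ strictly, so $M/m-g(x)>M/m-1>0$ and both logarithms are defined on all of $(M,\infty)$ — you do state this, so no gap remains.
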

\begin{proof}
	Assume, without loss of generality, that $a<b$. First, using the subsitution $x=\sqrt{u^2+a^2}$ gives \[\int_{u(c)}^{u(d)} \frac{1}{(u^2+a^2)\sqrt{u^2-(b^2-a^2)}}\d{u}.\] Next, we substitute $u$ with $\sqrt{b^2-a^2}\sec(v)$ to obtain \[\int_{v(u(c))}^{v(u(d))}\frac{\sec(v)}{(b^2-a^2)\sec(v)^2+a^2}\d{v}=\int_{v(u(c))}^{v(u(d))}\frac{\cos(v)}{b^2-\sin(v)^2a^2}\d{v},\] where we inserted the relation $\sec(v)=\frac{1}{\cos(v)}$ and used the trigonometric identity $\cos(v)^2=1-\sin(v)^2$ afterwards. Thus, the final substitution $w=\sin(v)$ leads to the standard integral \[\frac{1}{a^2}\int_{w(v(u(c)))}^{w(v(u(d)))}\frac{1}{\frac{b^2}{a^2}-w^2}\d{w}=\frac{1}{2ab}\left(\int_{w(v(u(c)))}^{w(v(u(d)))}\frac{1}{w+\frac{b}{a}}+\frac{1}{\frac{b}{a}-w}\d{w}\right).\] Finally, using the relation $\sin(\mathrm{arcsec}(x))=\frac{\sqrt{x^2-1}}{x}$ and inserting the upper and lower limit into the above integral yield the claimed identity.\\ The second statement is a consequence of the first integral identity, since $F(\max\set{a,b})=0$ and the limit $\lim_{d\to\infty} F(d)$ coincides with the right term in the second equality in Lemma \ref{lem:intinvsphericalmeanop}.\qedhere
\end{proof}
\begin{lemma}
	\label{lem:intinvbtneumanneven}
	Let $a,b,c,d\in\R$, $a<b$, $c\in\setcompl{[a,b]}$, $d\leq a$ and $c\neq d$. Then, we have the following identities:
	\begin{enumerate}
		\item
		\begin{equation}
			\begin{aligned}
				\int_a^b &\frac{x}{(x^2-c^2)\sqrt{x^2-d^2}}\d{x}\\
				&=\frac{1}{\sqrt{\abs{c^2-d^2}}}\begin{cases}
				\arctan\left(\frac{\sqrt{b^2-c^2}}{\sqrt{d^2-c^2}}\right)-\arctan\left(\frac{\sqrt{a^2-c^2}}{\sqrt{d^2-c^2}}\right),&\quad\text{c<d},\\
				\frac{1}{2}\log\left(\abs{\frac{\sqrt{c^2-d^2}-\sqrt{b^2-d^2}}{\sqrt{c^2-d^2}+\sqrt{b^2-d^2}}\frac{\sqrt{c^2-d^2}+\sqrt{a^2-d^2}}{\sqrt{c^2-d^2}-\sqrt{a^2-d^2}}}\right),&\quad\text{else.}
			\end{cases}
			\end{aligned}
		\end{equation}
		\item For $c>d$ we have
		\begin{equation}
			\lim_{\varepsilon\searrow 0} \frac{\sqrt{c^2-d^2}-\sqrt{(c-\varepsilon)^2-d^2}}{\sqrt{c^2-d^2}+\sqrt{(c-\varepsilon)^2-d^2}}\frac{\sqrt{(c+\varepsilon)^2-d^2}+\sqrt{c^2-d^2}}{\sqrt{(c+\varepsilon)^2-d^2}-\sqrt{c^2-d^2}}=1.
		\end{equation}
	\end{enumerate}	
\end{lemma}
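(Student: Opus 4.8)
My plan is to reduce part~(i) to an elementary rational integral by a single substitution, and to settle part~(ii) by a difference-of-squares manipulation that exposes a cancelling factor of $\varepsilon$. For part~(i), the hypotheses guarantee that the integrand is well behaved on $[a,b]$: since $c\in\setcompl{[a,b]}$ the factor $x^2-c^2$ never vanishes there, while $d\leq a$ keeps $\sqrt{x^2-d^2}$ real (vanishing at most at the left endpoint, where the singularity is integrable). I would then substitute $s=\sqrt{x^2-d^2}$, so that $s\,\d{s}=x\,\d{x}$ and $x^2-c^2=s^2+(d^2-c^2)$, which collapses the integral to the standard form
\[
	\int_a^b \frac{x}{(x^2-c^2)\sqrt{x^2-d^2}}\d{x}=\int_{\sqrt{a^2-d^2}}^{\sqrt{b^2-d^2}}\frac{\d{s}}{s^2+(d^2-c^2)}.
\]
The sign of the constant $d^2-c^2$ now dictates the two cases of the statement, which is precisely why the hypothesis $c\neq d$ is imposed.

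In the case $c<d$ the constant $d^2-c^2$ is positive, so with $k\coloneqq\sqrt{d^2-c^2}$ the antiderivative is $k^{-1}\arctan(s/k)$, and evaluating between the limits yields a difference of two $\arctan$ terms divided by $\sqrt{\abs{c^2-d^2}}$. In the case $c>d$ the denominator is $s^2-k^2$ with $k\coloneqq\sqrt{c^2-d^2}$, and the partial-fraction splitting $\frac{1}{s^2-k^2}=\frac{1}{2k}\big(\frac{1}{s-k}-\frac{1}{s+k}\big)$ integrates to $\frac{1}{2k}\log\abs{\frac{s-k}{s+k}}$. I expect the only real bookkeeping obstacle to be this last case: one evaluates at $s=\sqrt{b^2-d^2}$ and $s=\sqrt{a^2-d^2}$, subtracts, and merges the two logarithms into the single logarithm of a product, carefully tracking signs inside the absolute value; this is where the factors $\sqrt{c^2-d^2}\mp\sqrt{b^2-d^2}$ and $\sqrt{c^2-d^2}\pm\sqrt{a^2-d^2}$ displayed in the statement come from.

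For part~(ii) the expression is an indeterminate $0\cdot\infty$ form: with $P\coloneqq\sqrt{(c-\varepsilon)^2-d^2}$, $Q\coloneqq\sqrt{(c+\varepsilon)^2-d^2}$ and $k\coloneqq\sqrt{c^2-d^2}$, the first fraction tends to $0$ through its numerator $k-P$, while the second blows up through its denominator $Q-k$. I would rationalise both small differences using $k^2-P^2=c^2-(c-\varepsilon)^2=\varepsilon(2c-\varepsilon)$ and $Q^2-k^2=(c+\varepsilon)^2-c^2=\varepsilon(2c+\varepsilon)$, so that $k-P=\varepsilon(2c-\varepsilon)/(k+P)$ and $Q-k=\varepsilon(2c+\varepsilon)/(Q+k)$. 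Inserting these lets the common factor $\varepsilon$ cancel, reducing the product to
\[
	\frac{(2c-\varepsilon)\,(Q+k)^2}{(2c+\varepsilon)\,(k+P)^2},
\]
and since $P,Q\to k$ as $\varepsilon\searrow 0$, this converges to $\tfrac{2c}{2c}\cdot\tfrac{(2k)^2}{(2k)^2}=1$, establishing the claim.
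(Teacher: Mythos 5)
Your proof is correct. For part (i) your route coincides with the paper's: the substitution $s=\sqrt{x^2-d^2}$ is exactly the paper's $x=\sqrt{u^2+d^2}$ read backwards, and the subsequent case split (arctangent for $c<d$, partial fractions and logarithms for $c>d$) is identical; the paper merely spells out the sign bookkeeping by distinguishing $b<c$ from $b>c$ explicitly, which your absolute value inside the logarithm absorbs. For part (ii) you take a genuinely different and somewhat cleaner path: the paper isolates the indeterminate ratio $\bigl(\sqrt{c^2-d^2}-\sqrt{(c-\varepsilon)^2-d^2}\bigr)/\bigl(\sqrt{(c+\varepsilon)^2-d^2}-\sqrt{c^2-d^2}\bigr)$ and applies L'Hospital's rule, whereas you rationalize both small differences via $k^2-P^2=\varepsilon(2c-\varepsilon)$ and $Q^2-k^2=\varepsilon(2c+\varepsilon)$ so that the factor $\varepsilon$ cancels algebraically, leaving $\frac{(2c-\varepsilon)(Q+k)^2}{(2c+\varepsilon)(k+P)^2}\to 1$. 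Your version avoids any appeal to differentiation and makes the order of vanishing explicit, at the cost of a slightly longer display; both arguments implicitly use $c>0$ (true in the paper's application, where $c=\norm{x-y}$ and $d=t\geq 0$), so neither is at a disadvantage there.
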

\begin{proof}
	\begin{enumerate}[wide=\parindent,label=(\roman*)]
		\item We first substitute $x$ with $\sqrt{u^2+d^2}$ to obtain the integral \[\int_{u(a)}^{u(b)} \frac{1}{u^2+d^2-c^2}\d{u}.\] If $c<d$, then using the subsitution $u=\sqrt{d^2-c^2}v$ gives
		\begin{equation*}
			\frac{\sqrt{d^2-c^2}}{d^2-c^2}\int_{v(u(a))}^{v(u(b))}\frac{1}{1+v^2}\d{v}=\frac{1}{\sqrt{d^2-c^2}}\left(\arctan\left(\frac{\sqrt{b^2-c^2}}{\sqrt{d^2-c^2}}\right)-\arctan\left(\frac{\sqrt{a^2-c^2}}{\sqrt{d^2-c^2}}\right)\right).
		\end{equation*}
		If $d<c$, then \[\int_{u(a)}^{u(b)} \frac{1}{u^2-(c^2-d^2)}\d{u}=\frac{1}{2\sqrt{c^2-d^2}}\int_{u(a)}^{u(b)}\frac{1}{u-\sqrt{c^2-d^2}}-\frac{1}{u+\sqrt{c^2-d^2}}\d{u}.\] Hence, the above integral equals
		\[\frac{1}{2\sqrt{c^2-d^2}}\log\left(\frac{\sqrt{c^2-d^2}-\sqrt{b^2-d^2}}{\sqrt{c^2-d^2}+\sqrt{b^2-d^2}}\frac{\sqrt{c^2-d^2}+\sqrt{a^2-d^2}}{\sqrt{c^2-d^2}-\sqrt{a^2-d^2}}\right)\] for $b<c$ and
	\[\frac{1}{2\sqrt{c^2-d^2}}\log\left(\frac{\sqrt{b^2-d^2}-\sqrt{c^2-d^2}}{\sqrt{c^2-d^2}+\sqrt{b^2-d^2}}\frac{\sqrt{c^2-d^2}+\sqrt{a^2-d^2}}{\sqrt{a^2-d^2}-\sqrt{c^2-d^2}}\right)\] for $b>c$.
		\item Applying L'Hospital's rule gives
		\begin{align*}
			\lim_{\varepsilon\searrow 0} \frac{\sqrt{c^2-d^2}-\sqrt{(c-\varepsilon)^2-d^2}}{\sqrt{(c+\varepsilon)^2-d^2}-\sqrt{c^2-d^2}}=\lim_{\varepsilon\searrow 0}\frac{c-\varepsilon}{\sqrt{(c-\varepsilon)^2-d^2}}\frac{\sqrt{(c+\varepsilon)^2-d^2}}{c+\varepsilon}=1,
		\end{align*}
		which implies the second statement in Lemma \ref{lem:intinvbtneumanneven}.\qedhere
	\end{enumerate}
\end{proof}
\begin{lemma}
	\label{lem:fsinvbtneumanneven}
	Let $c>0$ be a positive number.
	\begin{enumerate}
		\item Define \[g_{\varepsilon}\colon\R\to\R^{+}\colon t\mapsto\begin{cases}
			\log\left(\frac{\sqrt{(c+\varepsilon)^2-t^2}+\sqrt{c^2-t^2}}{\sqrt{(c+\varepsilon)^2-t^2}-\sqrt{c^2-t^2}}\right),\quad &t\in[c-\varepsilon,c],\\
			0,\quad&\text{else,}
		\end{cases}\] for $\varepsilon>0$. Then, $g_\varepsilon$ is a monotonically decreasing on $[c-\varepsilon,c]$ and $\abs{g_\varepsilon}\leq\log(6+\sqrt{2})$.
		\item Let $0<\varepsilon<c$ and
			\begin{align*}
				h_{\varepsilon}\colon\R&\to\R^{+}\\
				t&\mapsto\begin{cases}
			\log\left(\frac{\sqrt{c^2-t^2}-\sqrt{(c-\varepsilon)^2-t^2}}{\sqrt{c^2-t^2}+\sqrt{(c-\varepsilon)^2-t^2}}\frac{\sqrt{(c+\varepsilon)^2-t^2}+\sqrt{c^2-t^2}}{\sqrt{(c+\varepsilon)^2-t^2}-\sqrt{c^2-t^2}}\right),\quad &t\in[0,c-\varepsilon],\\
			0,\quad&\text{else.}
		\end{cases}
			\end{align*}
			Then, $h_\varepsilon$ is a monotonically increasing on $[0,c-\varepsilon]$ and $\abs{h_\varepsilon}\leq\log(6+\sqrt{2})$.
	\end{enumerate}
	\begin{proof}
		\begin{enumerate}[wide=\parindent,label=(\roman*)]
			\item \label{item:idinvtbneumann1}By differentiating the inner fraction in $g_\varepsilon$, one easily verifies that the derivative is smaller than zero. Hence, from the monotonicity of the logarithmic function we imply that $g_\varepsilon$ is a monotonically decreasing on $[0,c-\varepsilon]$. Furthermore, note that the inner fraction is greater than or equal to one for $t\leq c$ and \[g_\varepsilon(c-\varepsilon)=\log\left(\frac{\sqrt{4c}+\sqrt{2c-\varepsilon}}{\sqrt{4c}-\sqrt{2c-\varepsilon}}\right).\] Therefore, using that $\varepsilon\mapsto\frac{\sqrt{4c}+\sqrt{2c-\varepsilon}}{\sqrt{4c}-\sqrt{2c-\varepsilon}}$ is decreasing for $0\leq\varepsilon\leq 2c$, the first statement holds.
			\item Differentiating the inner function in $h_\varepsilon$ for $t<c-\varepsilon$ with the product rule and a subsequent factorization lead to the expression \[2t\frac{G(t)}{\sqrt{c^2-x^2}}\left(\frac{1}{\sqrt{(c-\varepsilon)^2-t^2}}-\frac{1}{\sqrt{(c+\varepsilon)^2-t^2}})\right)> 0,\] where $G(t)$ denotes the inner function in $h_\varepsilon$. This implies the monotonicity of $h_\varepsilon$ on $[0,c-\varepsilon]$. Then, the same arguments as in \ref{item:idinvtbneumann1} show the remaining statement.\qedhere
		\end{enumerate}
	\end{proof}
\end{lemma}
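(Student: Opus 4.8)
The plan is to handle both parts with the same two-step recipe: establish monotonicity by differentiating the argument of the logarithm and reading off the sign, and then bound the function at the endpoint where that monotonicity forces the extremum. Throughout I abbreviate $A\coloneqq\sqrt{(c+\varepsilon)^2-t^2}$, $B\coloneqq\sqrt{c^2-t^2}$ and, for the second part, $C\coloneqq\sqrt{(c-\varepsilon)^2-t^2}$, so that on the respective $t$-ranges $A>B>C>0$. The computations rest on the observation that the differences of squares $A^2-B^2=2c\varepsilon+\varepsilon^2$ and $B^2-C^2=2c\varepsilon-\varepsilon^2$ are independent of $t$, which is precisely what makes the logarithmic derivatives collapse, together with the elementary derivatives $A'=-t/A$, $B'=-t/B$, $C'=-t/C$.

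For part (i) I would rewrite $g_\varepsilon=\log\frac{A+B}{A-B}=\log(A^2-B^2)-2\log(A-B)$, using that $A^2-B^2$ is constant in $t$. A one-line computation yields $(A-B)'=t\,\frac{A-B}{AB}>0$ on $(c-\varepsilon,c)$, so $A-B$ increases and hence $g_\varepsilon$ decreases; in particular $g_\varepsilon\geq 0$ since the inner fraction exceeds $1$. The maximum is then attained at $t=c-\varepsilon$, where $A=2\sqrt{c\varepsilon}$ and $B=\sqrt{\varepsilon}\sqrt{2c-\varepsilon}$, giving $g_\varepsilon(c-\varepsilon)=\log\frac{2\sqrt{c}+\sqrt{2c-\varepsilon}}{2\sqrt{c}-\sqrt{2c-\varepsilon}}$. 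Substituting $s\coloneqq\sqrt{2c-\varepsilon}$ shows this is increasing in $s$, hence decreasing in $\varepsilon$ on the admissible range $0<\varepsilon\leq 2c$, so its supremum is the $\varepsilon\to 0$ value $\log(3+2\sqrt{2})$; since $3+2\sqrt{2}\leq 6+\sqrt{2}$, the bound $\abs{g_\varepsilon}\leq\log(6+\sqrt{2})$ follows.

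For part (ii) I would set $h_\varepsilon=\log G$ with $G=\frac{B-C}{B+C}\cdot\frac{A+B}{A-B}$ and differentiate $\log G$ as a sum of four logarithms. Grouping the $(B\pm C)$ terms and the $(A\pm B)$ terms separately, each pair simplifies through the constant-difference identities $B'C-C'B=t\,\frac{B^2-C^2}{BC}$ and $B'A-A'B=t\,\frac{B^2-A^2}{AB}$: the first pair contributes $\frac{2t}{BC}$ and the second reproduces the part (i) expression $-\frac{2t}{AB}$. Hence $\frac{G'}{G}=\frac{2t}{\sqrt{c^2-t^2}}\bigl(\frac{1}{\sqrt{(c-\varepsilon)^2-t^2}}-\frac{1}{\sqrt{(c+\varepsilon)^2-t^2}}\bigr)>0$ because $(c-\varepsilon)^2<(c+\varepsilon)^2$, so $h_\varepsilon$ increases on $[0,c-\varepsilon]$. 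At the endpoint $t=c-\varepsilon$ one has $C=0$, so $\frac{B-C}{B+C}=1$ and $h_\varepsilon(c-\varepsilon)$ equals exactly the endpoint value $g_\varepsilon(c-\varepsilon)$ from part (i); the bound $\log(6+\sqrt{2})$ therefore transfers verbatim.

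I expect the factorization in part (ii) to be the main obstacle: one must check that the four-term logarithmic derivative of $G$ genuinely collapses to the single stated product. The cancellations are invisible term by term and surface only after the two pairs are treated separately and the $t$-independence of each difference of squares is invoked; keeping this algebra organized is where the care lies, whereas the monotonicity conclusions and the endpoint bounds are then immediate.
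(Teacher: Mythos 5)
Your proposal is correct and follows essentially the same route as the paper: monotonicity via differentiating the argument of the logarithm (your four-term logarithmic derivative collapses to exactly the paper's factored expression $\frac{2t}{\sqrt{c^2-t^2}}\bigl(\frac{1}{\sqrt{(c-\varepsilon)^2-t^2}}-\frac{1}{\sqrt{(c+\varepsilon)^2-t^2}}\bigr)$), followed by evaluation at $t=c-\varepsilon$ and optimization over $\varepsilon$. Your identification of the sharp supremum $\log(3+2\sqrt{2})$ is a slightly cleaner endgame than the paper's, and it correctly implies the stated (non-sharp) bound $\log(6+\sqrt{2})$.
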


\end{document}